\pdfoutput=1
\documentclass[11pt,a4paper]{article}
\usepackage[T1]{fontenc}
\usepackage{authblk}


\usepackage{color}
\usepackage{helvet}         
\usepackage{courier}        
\usepackage{type1cm}        
%
\usepackage{makeidx}         
\usepackage{graphicx}        
\usepackage{multicol}        
\usepackage[bottom]{footmisc}
\usepackage{amsmath}
\usepackage{amssymb}
\usepackage{bbold}
\usepackage{amsthm}
\usepackage{subcaption}
\usepackage{comment}
\newtheorem{theorem}{Theorem}
\newtheorem{corollary}[theorem]{Corollary}

\newtheorem{lemma}[theorem]{Lemma}

\newtheorem{proposition}[theorem]{Proposition}

\newtheorem{remark}[theorem]{Remark}
\newtheorem{definition}[theorem]{Definition}
\usepackage[top=2cm, bottom=2cm, left=2cm, right=2cm]{geometry}
\numberwithin{theorem}{section}
\numberwithin{figure}{section}
\numberwithin{equation}{section}

\DeclareMathOperator{\dist}{dist}
\DeclareMathOperator{\SLE}{SLE}
\DeclareMathOperator{\hSLE}{hSLE}

\DeclareMathOperator{\GFF}{GFF}

\DeclareMathOperator{\free}{free}
\DeclareMathOperator{\hF}{{}_2F_1}
\DeclareMathOperator{\simple}{simple}





\begin{document}

\title{Convergence of the Critical Planar Ising Interfaces to Hypergeometric SLE}
\author{Hao Wu\thanks{hao.wu.proba@gmail.com}\\
NCCR/SwissMAP, Section de Math\'{e}matiques, Universit\'{e} de Gen\`{e}ve, Switzerland\\
\textit{and} Yau Mathematical Sciences Center, Tsinghua University, China}
\date{}
\affil{}

%
%
\maketitle

\abstract{We consider the planar Ising model in rectangle $(\Omega; x^L, x^R, y^R, y^L)$ with alternating boundary condition: $\ominus$ along $(x^Lx^R)$ and $(y^Ry^L)$, $\xi^R\in\{\oplus, \free\}$ along $(x^Ry^R)$, and $\xi^L\in\{\oplus, \free\}$ along $(y^Lx^L)$. We prove that the interface of critical Ising model with these boundary conditions converges to the so-called hypergeometric SLE$_3$. The method developed in this paper does not require constructing new holomorphic observable and the input is the convergence of the interface with Dobrushin boundary condition. This method could be applied to other lattice models, for instance Loop-Erased Random Walk and level lines of discrete Gaussian Free Field.\\
\textbf{Keywords:} Critical Planar Ising, Hypergeometric SLE.}
\newcommand{\eps}{\epsilon}
\newcommand{\ov}{\overline}
\newcommand{\U}{\mathbb{U}}
\newcommand{\T}{\mathbb{T}}
\newcommand{\HH}{\mathbb{H}}
\newcommand{\LA}{\mathcal{A}}
\newcommand{\LB}{\mathcal{B}}
\newcommand{\LC}{\mathcal{C}}
\newcommand{\LD}{\mathcal{D}}
\newcommand{\LF}{\mathcal{F}}
\newcommand{\LK}{\mathcal{K}}
\newcommand{\LE}{\mathcal{E}}
\newcommand{\LG}{\mathcal{G}}
\newcommand{\LL}{\mathcal{L}}
\newcommand{\LM}{\mathcal{M}}
\newcommand{\LQ}{\mathcal{Q}}
\newcommand{\LP}{\mathcal{P}}
\newcommand{\LU}{\mathcal{U}}
\newcommand{\LV}{\mathcal{V}}
\newcommand{\LZ}{\mathcal{Z}}
\newcommand{\LH}{\mathcal{H}}
\newcommand{\R}{\mathbb{R}}
\newcommand{\C}{\mathbb{C}}
\newcommand{\N}{\mathbb{N}}
\newcommand{\Z}{\mathbb{Z}}
\newcommand{\E}{\mathbb{E}}
\newcommand{\PP}{\mathbb{P}}
\newcommand{\QQ}{\mathbb{Q}}
\newcommand{\A}{\mathbb{A}}
\newcommand{\one}{\mathbb{1}}
\newcommand{\bn}{\mathbf{n}}
\newcommand{\MR}{MR}
\newcommand{\cond}{\,|\,}
\newcommand{\la}{\langle}
\newcommand{\ra}{\rangle}
\newcommand{\tree}{\Upsilon}

\section{Introduction}
The Lenz-Ising model is introduced to model the ferromagnetism in statistical mechanics.  Due to celebrated work of Chelkak and Smirnov \cite{ChelkakSmirnovIsing}, it is proved that at the critical temperature, the interface of Ising model is conformally invariant. In particular, the interface of critical Ising model with Dobrushin boundary condition converges to $\SLE_3$ \cite{CDCHKSConvergenceIsingSLE}, and the interface of critical Ising model with free boundary condition converges to $\SLE_3(-3/2;-3/2)$ \cite{HonglerKytolaIsingFree, IzyurovObservableFree}, and the interface for multiply-connected domains \cite{IzyurovIsingMultiplyConnectedDomains}.
In these cases, the proofs are based on constructing holomorphic observables. In this paper, we study the scaling limit of the critical Ising model with alternating boundary conditions: we consider critical Ising model in a rectangle $(\Omega; x^L, x^R, y^R, y^L)$ with $\ominus$ along $(x^Lx^R)$ and $(y^Ry^L)$, $\xi^R\in\{\oplus, \free\}$ along $(x^Ry^R)$, and $\xi^L\in\{\oplus, \free\}$ along $(y^Lx^L)$. With these boundary conditions, on the event that there is a vertical crossing of $\ominus$, we see that there are two interfaces in the model $(\eta^L; \eta^R)$ where $\eta^L$ is an interface from $x^L$ to $y^L$ and $\eta^R$ is an interface from $x^R$ to $y^R$. In this paper, we study the law of the pair $(\eta^L;\eta^R)$. The scaling limit of $\eta^L$ is the so-called hypergeometric SLE, denoted by $\hSLE$. 

There are two features on the method developed in this paper. First, constructing holomorphic observable is the usual way to prove the convergence of interfaces in the critical lattice model; however, with our method, there is no need to construct new observable. The only input we need is the convergence of the interface with Dobrushin boundary condition.  Second, there are many works on multiple SLEs trying to study the scaling limit of interfaces in critical lattice model with alternating boundary conditions, see \cite{DubedatCommutationSLE, KytolaMultipleSLE, KytolaPeltolaMultipleSLE}, and their works study the local growth of these interfaces. Whereas, our result is ``global": we prove that the scaling limit of $\eta^L$ is $\hSLE_3$ as a continuous curve from $x^L$ to $y^L$. Moreover, the method developed in this paper also works for other lattice models as long as we have the convergence with Doburshin boundary conditions, for instance Loop-Erased Random Walk and level lines of discrete Gaussian Free Field. 

In this paper, we first study the properties of $\hSLE$ in Theorem \ref{thm::hyperSLE_reversibility}. Then we study the possible scaling limit of the pair of the interfaces $(\eta^L;\eta^R)$. We realize that there exists only one possible candidate for the limit of the pair $(\eta^L;\eta^R)$, see Theorem \ref{thm::slepair_lessthan4}. By identifying the only possible candidate, we prove the convergence of the pair of the interfaces $(\eta^L;\eta^R)$ in the critical Ising model with alternating boundary condition in Theorem \ref{thm::ising_cvg_pair}. In particular, this gives the convergence of $\eta^L$ to $\hSLE_3$.

\begin{theorem} \label{thm::hyperSLE_reversibility}
Fix $\kappa\in (0,8), \rho>(-4)\vee(\kappa/2-6)$, and $0<x<y$. Let $\eta$ be the $\hSLE_{\kappa}(\rho)$ in $\HH$ from $0$ to $\infty$ with marked points $(x,y)$. The process $\eta$ is almost surely generated by a continuous transient curve. Moreover, the process $\eta$ enjoys reversibility for $\rho\ge\kappa/2-4$: the time reversal of $\eta$ is the $\hSLE_{\kappa}(\rho)$ in $\HH$ from $\infty$ to $0$ with marked points $(y, x)$.  
\end{theorem}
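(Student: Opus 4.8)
The plan is to prove the two assertions separately, in each case reducing to the already-developed theory of $\SLE_\kappa(\underline\rho)$ processes with boundary force points. Throughout I would work from the Loewner description of $\eta$: writing $g_t$ for the conformal maps, $W_t$ for the driving function and $V^1_t=g_t(x)$, $V^2_t=g_t(y)$ for the images of the two marked points, the $\hSLE_\kappa(\rho)$ is the solution of a system in which $W_t$ is driven by $\sqrt\kappa\,dB_t$ plus a drift equal to the logarithmic derivative, in the $W$-variable, of the hypergeometric partition function $\mathcal Z(x,y)$.

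For continuity and transience, the structural point is that this drift differs from that of an ordinary $\SLE_\kappa(\rho)$ with a single force point at $x$ only by a term that is smooth and bounded as long as $\eta$ stays away from the marked point $y$. Hence, up to the first time the force points are swallowed or the gaps degenerate, Girsanov's theorem gives local absolute continuity of the law of $\eta$ with respect to $\SLE_\kappa(\rho)$; since the latter is generated by a continuous curve, so is $\eta$ on this interval. I would then analyze the SDE near the swallowing times by comparison with Bessel processes for the gaps $W_t-V^1_t$ and $V^2_t-W_t$, showing the curve extends continuously across them. Once the last force point has been absorbed the evolution is again locally absolutely continuous with respect to an $\SLE_\kappa$ (or an $\SLE_\kappa(\rho)$) targeted at $\infty$, whose transience is standard, which yields $\eta_t\to\infty$.

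For reversibility, the strategy is to realize $\hSLE_\kappa(\rho)$ as locally absolutely continuous with respect to a process $\SLE_\kappa(\rho_1;\rho_2)$ whose time reversal is already known, and then to transport that reversal through the Radon--Nikodym weight. The constraint $\rho\ge\kappa/2-4$ is precisely the threshold under which Miller and Sheffield establish the reversibility of $\SLE_\kappa(\rho_1;\rho_2)$ from $0$ to $\infty$, with reversed law $\SLE_\kappa(\rho_2;\rho_1)$ from $\infty$ to $0$; I would use their result as the engine. Concretely, I would verify that $\mathcal Z(x,y)$ satisfies the relevant second-order null-vector PDE together with the correct covariance under M\"obius maps, so that the density of $\hSLE$ against the two-force-point process is a genuine local martingale, and then check that this weight is invariant under the anti-conformal involution of $\HH$ swapping $0\leftrightarrow\infty$ and $x\leftrightarrow y$. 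Combining the symmetry of the weight with the reversibility of the underlying $\SLE_\kappa(\rho_1;\rho_2)$ identifies the time reversal of $\eta$ as the $\hSLE_\kappa(\rho)$ from $\infty$ to $0$ with marked points $(y,x)$.

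The main obstacle I anticipate is controlling the process uniformly up to and through the swallowing of the marked points. The absolute-continuity arguments are transparent only away from the force points, whereas both proofs must be pushed all the way to the terminal configuration; this requires precise estimates on the blow-up rate of $\mathcal Z$, hence of the Radon--Nikodym derivative, as $y-x\to 0$ or as a force point is absorbed, together with a verification that these degenerations are integrable so that no mass is lost in the limit. A secondary but delicate point is to confirm that the hypergeometric weight stays strictly positive and that its M\"obius covariance matches the reversal involution \emph{exactly}, since any discrepancy in the conformal exponents would alter the value of $\rho$ in the reversed curve and break the identification.
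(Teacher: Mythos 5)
Your outline of the first half is in the right spirit (local absolute continuity with respect to an $\SLE_{\kappa}(\rho)$-type process away from the marked points), but the step you defer --- ``analyze the SDE near the swallowing times by comparison with Bessel processes for the gaps'' --- is exactly where the content lies, and a Bessel estimate on $V^y_t-W_t$ does not by itself give continuity of the \emph{curve} at the swallowing time $T_y$ of $y$ (for $\kappa\in(4,8)$ one must show the trace accumulates at a single point of $(y,\infty)$ rather than oscillating). The paper resolves this by a change of coordinates: reparameterize by capacity seen from $y$, i.e.\ apply the M\"obius map sending $(0,y,\infty)\mapsto(0,\infty,-1)$, after which the process becomes absolutely continuous (with \emph{bounded} density, since $F$ is bounded on $[0,1]$ and $g_s(\tilde x)-g_s(-1)$ is monotone) with respect to an $\SLE_{\kappa}(\kappa-6;\rho+2)$, whose behavior at its target point is known. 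After $T_y$ one checks $F'(z)(1-z)\to 0$ as $z\to 1$, so the driving SDE degenerates to $\sqrt{\kappa}B_t$ exactly, not merely up to absolute continuity. You should also note that your reference process should carry weight $\rho+2$ at $x$, not $\rho$.

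The reversibility argument has a more serious gap. The marked points satisfy $0<x<y$: they are interior boundary points on one side of the seed, so the comparison process is $\SLE_{\kappa}(\rho+2,\kappa-6-\rho)$ with force points $(x,y)$, \emph{not} an $\SLE_{\kappa}(\rho_1;\rho_2)$ with force points adjacent to $0$. The Miller--Sheffield reversibility theorems apply only to the latter; the time reversal of an $\SLE_{\kappa}(\rho)$ process with an interior force point is not itself an $\SLE_{\kappa}(\rho)$ process, so there is no ``underlying reversibility'' to transport through your weight. Relatedly, the threshold $\rho\ge\kappa/2-4$ is not the Miller--Sheffield threshold: it is the condition under which $\hSLE_{\kappa}(\rho)$ avoids the interval $[x,y]$. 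The paper's actual mechanism takes plain $\SLE_{\kappa}$ as the reference measure, shows the local martingale $M_t=Z_t^aJ_t^bF(Z_t)$ is uniformly integrable precisely because the weighted process avoids $[x,y]$, and computes the terminal value $M_\infty=J_\infty^b$ with
\[
J_\infty=\frac{g'(x)g'(y)}{(g(y)-g(x))^2},
\]
the boundary Poisson kernel of the component of $\HH\setminus\eta$ containing $[x,y]$. This is a functional of the completed trace alone, invariant under reversal, so the reversibility of ordinary $\SLE_{\kappa}$ for $\kappa\in(0,8)$ transfers directly. Your instinct to ``check that the weight is invariant under the involution swapping $0\leftrightarrow\infty$ and $x\leftrightarrow y$'' is the right one, but it must be applied to the \emph{terminal} density against plain $\SLE_{\kappa}$, and establishing that this terminal density exists (uniform integrability, no mass lost as $Z_t\to 1$ or $J_t\to 0$) is the part of the proof that genuinely uses $\rho\ge\kappa/2-4$.
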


\begin{theorem}\label{thm::slepair_lessthan4}
Fix a topological rectangle $(\Omega; x^L, x^R, y^R, y^L)$. Let $X_0(\Omega; x^L, x^R, y^R, y^L)$ be the collection of pairs of continuous curves $(\eta^L;\eta^R)$ in $\Omega$ such that $\eta^L$ (resp. $\eta^R$) is a continuous curve from $x^L$ to $y^L$ (resp. from $x^R$ to $y^R$) that does not intersect $(x^Ry^R)$ (resp. does not intersect $(y^Lx^L)$) and that $\eta^L$ is to the left of $\eta^R$.
Fix $\kappa\in (0,4]$ and $\rho^L>-2, \rho^R>-2$. 
\begin{itemize}
\item (Existence and Uniqueness) There exists a unique probability measure on $X_0(\Omega; x^L, x^R, y^R, y^L)$ with the following property: the conditional law of $\eta^R$ given $\eta^L$ is $\SLE_{\kappa}(\rho^R)$ with force point $x^R_+$ in the connected component of $\Omega\setminus\eta^L$ with $(x^Ry^R)$ on the boundary, and the conditional law of $\eta^L$ given $\eta^R$ is $\SLE_{\kappa}(\rho^L)$ with force point $x^L_-$ in the connected component of $\Omega\setminus\eta^R$ with $(y^Lx^L)$ on the boundary.
\item (Identification) Under this probability measure and fix $\rho^L=0$ and $\rho^R>-2$, the marginal law of $\eta^L$ is $\hSLE_{\kappa}(\rho^R)$ with marked points $(x^R, y^R)$.  
\end{itemize}
\end{theorem}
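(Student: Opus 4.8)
The plan is to organize the whole argument around the resampling characterization, obtaining existence and uniqueness together from a monotone coupling and then reading off the marginal of $\eta^L$ by a Girsanov computation. For existence I would set up the resampling dynamics on $X_0(\Omega;x^L,x^R,y^R,y^L)$: given an admissible pair, first resample $\eta^R$ as $\SLE_\kappa(\rho^R)$ with force point $x^R_+$ in the component of $\Omega\setminus\eta^L$ bordering $(x^Ry^R)$, then resample $\eta^L$ as $\SLE_\kappa(\rho^L)$ with force point $x^L_-$ in the component of $\Omega\setminus\eta^R$ bordering $(y^Lx^L)$. The key structural fact, available precisely because $\kappa\le 4$ makes the curves simple, is monotonicity of $\SLE_\kappa(\rho)$ for $\rho>-2$ in the domain and the force point; consequently the resampling map is monotone for the partial order in which $(\eta^L;\eta^R)\preceq(\tilde\eta^L;\tilde\eta^R)$ when $\eta^L$ lies to the left of $\tilde\eta^L$ and $\eta^R$ to the left of $\tilde\eta^R$. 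Running the chain from the two extremal (leftmost and rightmost) admissible configurations produces one $\preceq$-increasing and one $\preceq$-decreasing sequence of laws, both tight by standard $\SLE_\kappa(\rho)$ regularity and the confinement of all curves between the extremes; the limits are fixed points at which neither conditional resampling alters the law, which is exactly the stated conditional-law property.

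For uniqueness I would run the two extremal chains in a single monotone coupling, keeping the lower configuration $\preceq$ the upper one at every step. Any measure $\PP$ with the stated property is a fixed point, hence sandwiched between the two extremal chains, so it suffices to show that the coupled chains coalesce in law. \emph{This coalescence is the main obstacle.} To establish it I would quantify the contraction of the resampling: using the absolute continuity of $\SLE_\kappa(\rho)$ with respect to $\SLE_\kappa$ away from the force point, together with a localization near the four marked points, I would show that at each sweep the resampled upper and lower curves agree on an initial macroscopic portion with probability bounded below, and then iterate. Since monotonicity makes the ``gap'' between the coupled configurations nonincreasing, this contraction drives it to zero and pins down $\PP$ uniquely.

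For the identification, fix $\rho^L=0$, so that conditionally on $\eta^R$ the curve $\eta^L$ is an ordinary $\SLE_\kappa$ from $x^L$ to $y^L$ in the left component of $\Omega\setminus\eta^R$; I would compute the marginal of $\eta^L$ by a change of measure from this reference $\SLE_\kappa$. Parametrize $\eta^L$ by its Loewner evolution with driving function $W_t$ and maps $g_t$, and let $\psi$ be the conditional partition function---the mass of admissible $\eta^R$ sampled as $\SLE_\kappa(\rho^R)$ in the complement---expressed through the images $g_t(x^R),g_t(y^R)$ of the marked points. By the conditional-law structure $\psi$ is a local martingale under the reference $\SLE_\kappa$, so reweighting $\SLE_\kappa$ by $\psi$ reproduces the marginal of $\eta^L$, and Girsanov gives the driving function the drift $\kappa\,\partial_w\log\psi\,dt$. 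It then remains to identify $\psi$ explicitly: applying the $\SLE_\kappa(\rho^R)$ generator to the admissibility event shows that $\psi$, viewed as a function of the cross-ratio of $(W_t,g_t(x^R),g_t(y^R),\infty)$, solves the second-order hypergeometric ODE whose relevant solution is the $\hF$ expression in the definition of $\hSLE_\kappa(\rho^R)$ with marked points $(x^R,y^R)$. Matching the drifts identifies the marginal of $\eta^L$ with this $\hSLE_\kappa(\rho^R)$, the continuity and transience of the limiting curve being furnished by Theorem \ref{thm::hyperSLE_reversibility}.
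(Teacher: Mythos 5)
Your strategy for existence and uniqueness rests on a claimed ``monotonicity of $\SLE_{\kappa}(\rho)$ for $\rho>-2$ in the domain and the force point,'' i.e.\ a coupling in which shrinking the domain on one side pushes the curve to one side. This is a genuine gap: no such monotone coupling of a single $\SLE_{\kappa}(\rho)$ with respect to domain inclusion is known, and it is not a consequence of the curves being simple for $\kappa\le 4$. (Ordering statements of this type exist for level lines of a \emph{common} GFF at different heights, or flow lines at different angles, but those compare two curves coupled through one field, not the same $\SLE_{\kappa}(\rho)$ law in two nested domains.) Without it, the extremal chains, the sandwiching of an arbitrary stationary measure, and the coalescence argument all collapse; moreover the ``leftmost/rightmost admissible configurations'' you would start from are degenerate boundary arcs, not elements of $X_0$, so even initializing the chains is problematic. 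The paper's uniqueness proof (Proposition \ref{prop::slepair_uniqueness_general}, following Miller--Sheffield) uses the same resampling chain but replaces monotonicity entirely: it restricts to $X_0^{\eps}$, shows the set of stationary measures is compact and convex, applies Choquet's theorem, and rules out two distinct extremals by showing they can be neither mutually absolutely continuous nor mutually singular --- the latter via Lemma \ref{lem::sle_close_positive}, which couples two resampled curves to coincide with positive probability. That is the step your vague ``contraction'' sketch would need to become, and it does not require any ordering.

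For existence the paper does not use a fixed-point argument at all: it writes down the measure explicitly as the product $\PP_L\otimes\PP_R$ of two independent $\SLE_{\kappa}(\rho^q)$ laws reweighted by $\one_{\{\eta^L\cap\eta^R=\emptyset\}}\exp\left(c\, m(\HH;\eta^L,\eta^R)\right)$, with finiteness of the total mass and the correct conditional laws coming from the conformal restriction martingale of Lemma \ref{lem::sle_domain_mart} and the Brownian loop identity (\ref{eqn::sle_domain_brownianloopmass}). Your identification step is closest in spirit to the paper, but as written it is circular and slightly garbled: if $\eta^R$ is sampled \emph{in the complement} of $\eta^L$ then the ``admissibility event'' has probability one and your partition function $\psi$ is trivial. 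The quantity that actually weights the reference $\SLE_{\kappa}$ is the conformal restriction factor $\left(g'(x^R)g'(y^R)/(g(y^R)-g(x^R))^2\right)^b$ coming from the explicit construction, and the paper identifies the reweighted law with $\hSLE_{\kappa}(\rho^R)$ by invoking Proposition \ref{prop::hypersle_mart} (which already computed $M_{\infty}=J_{\infty}^b$ for the hypergeometric martingale), rather than by re-deriving the hypergeometric ODE and matching drifts. Your ODE route could in principle be made to work, but it needs the explicit partition function as input --- which your existence argument does not supply --- and a boundary-condition analysis to select the correct solution of the ODE.
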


\begin{figure}[ht!]
\begin{center}
\includegraphics[width=0.8\textwidth]{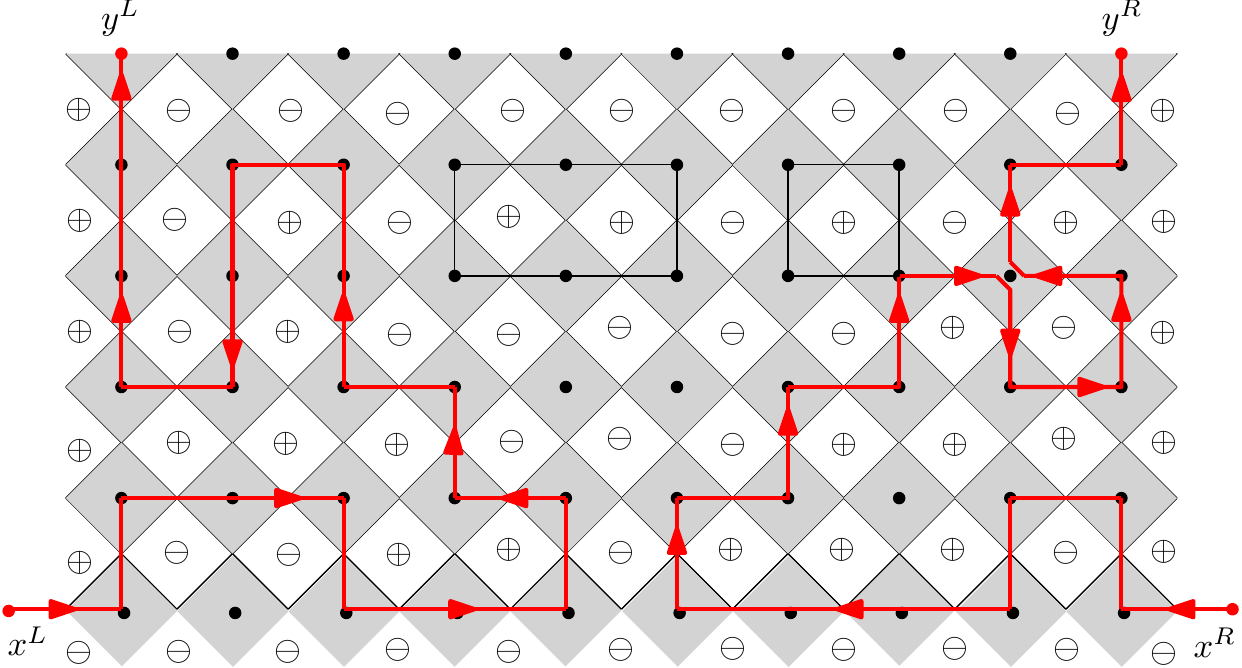}
\end{center}
\caption{\label{fig::ising_pair} The Ising interface with alternating boundary condition. }
\end{figure}

\begin{theorem}\label{thm::ising_cvg_pair}
Let discrete domains $(\Omega_{\delta}; x^L_{\delta}, x^R_{\delta}, y^R_{\delta}, y^L_{\delta})$ on the square lattice approximate some topological rectangle $(\Omega; x^L, x^R, y^R, y^L)$ as $\delta\to 0$. Consider the critical Ising model in $\Omega_{\delta}$ with the alternating boundary condition: $\ominus$ on $(x^L_{\delta}x^R_{\delta})$ and $(y^R_{\delta}y^L_{\delta})$, $\xi^R\in\{\oplus, \free\}$ on $(x^R_{\delta}y^R_{\delta})$, and $\xi^L\in\{\oplus, \free\}$ on $(y^L_{\delta}x^L_{\delta})$. Conditioned on the event that there exists a vertical crossing of $\ominus$, then there exists a pair of interfaces $(\eta^L_{\delta};\eta^R_{\delta})$ where $\eta^L_{\delta}$ (resp. $\eta^R_{\delta}$) is the interface connecting $x^L_{\delta}$ to $y^L_{\delta}$ (resp. connecting $x^R_{\delta}$ to $y^R_{\delta}$).  The law of the pair $(\eta^L_{\delta}; \eta^R_{\delta})$ converges weakly to the pair of $\SLE$ curves in Theorem \ref{thm::slepair_lessthan4} as $\delta\to 0$ where $\kappa=3$ and $\xi^R, \xi^L,\rho^R,\rho^L$ are related in the following way: for $q\in\{L, R\}$,
\[\rho^q=0,\quad\text{if }\xi^q=\oplus;\quad \rho^q=-3/2,\quad\text{if }\xi^q=\free.\]
In particular, when $\xi^R=\oplus$ and $\xi^L=\oplus$, 
the law of $\eta^L_{\delta}$ converges weakly to $\hSLE_{3}(0)$ as $\delta\to 0$; when $\xi^R=\oplus$ and $\xi^L=\free$, the law of $\eta^L_{\delta}$ converges weakly to $\hSLE_3(-3/2)$ as $\delta\to 0$.
\end{theorem}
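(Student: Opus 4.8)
The plan is to prove the convergence by the standard three-step scheme: establish tightness of the laws of the pairs $(\eta^L_{\delta};\eta^R_{\delta})$, identify every subsequential limit as a measure satisfying the conditional-law characterization of Theorem \ref{thm::slepair_lessthan4}, and then invoke the uniqueness part of that theorem to upgrade subsequential convergence to convergence of the whole family. The engine throughout is the spatial (domain) Markov property of the Ising model. Conditionally on $\eta^R_{\delta}$, the configuration in the connected component $\Omega^L_{\delta}$ of $\Omega_{\delta}\setminus\eta^R_{\delta}$ having $(y^L_{\delta}x^L_{\delta})$ on its boundary is again a critical Ising model, now with $\ominus$ along the whole arc running from $x^L_{\delta}$ through the bottom, along $\eta^R_{\delta}$, and through the top up to $y^L_{\delta}$, and with $\xi^L$ along $(y^L_{\delta}x^L_{\delta})$. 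Hence, conditionally on $\eta^R_{\delta}$, the curve $\eta^L_{\delta}$ is exactly an Ising interface from $x^L_{\delta}$ to $y^L_{\delta}$ with Dobrushin boundary conditions (when $\xi^L=\oplus$) or with the free/$\ominus$ boundary conditions (when $\xi^L=\free$) in $\Omega^L_{\delta}$; the symmetric statement holds for $\eta^R_{\delta}$ conditioned on $\eta^L_{\delta}$ after a left--right reflection.

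For tightness I would appeal to the uniform crossing (RSW) estimates for the critical Ising model, which hold irrespective of the boundary conditions, to verify the Kemppainen--Smirnov geometric bound for each interface. This yields tightness of each marginal in the space of continuous curves modulo reparametrization, hence tightness of the joint law on the product space, and guarantees that every subsequential limit is supported on pairs of Loewner curves. The same crossing estimates keep the probability of the conditioning event (existence of a vertical $\ominus$ crossing) bounded away from $0$ and $1$, so the conditioned laws and their limits are non-degenerate; passing the discrete ordering and non-intersection constraints to the limit shows that every subsequential limit is supported on $X_0(\Omega;x^L,x^R,y^R,y^L)$.

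The heart of the argument is the identification of the conditional laws of a subsequential limit $(\eta^L;\eta^R)$. By Skorokhod's representation I would realize a subsequence so that $(\eta^L_{\delta};\eta^R_{\delta})\to(\eta^L;\eta^R)$ almost surely. Testing against bounded continuous functionals $F$ of the first curve and $G$ of the second, I would write
\[\E[F(\eta^L)G(\eta^R)]=\lim_{\delta}\E\big[G(\eta^R_{\delta})\,\E[F(\eta^L_{\delta})\cond\eta^R_{\delta}]\big],\]
and use the input convergence theorem --- Dobrushin to $\SLE_3$, or free to $\SLE_3(-3/2)$ --- to identify the inner conditional expectation. Conditionally on $\eta^R_{\delta}$ the curve $\eta^L_{\delta}$ is an Ising interface in $\Omega^L_{\delta}$, and as $\eta^R_{\delta}\to\eta^R$ the domains $\Omega^L_{\delta}$ converge in the Carath\'eodory sense (with respect to the marked points $x^L,y^L$) to the component $\Omega^L$ of $\Omega\setminus\eta^R$; the convergence of the Ising interface with the prescribed boundary data then gives
\[\E[F(\eta^L_{\delta})\cond\eta^R_{\delta}]\longrightarrow \E^{\SLE_3(\rho^L)}_{\Omega^L}[F],\]
the $\SLE_3(\rho^L)$ expectation in $\Omega^L$ from $x^L$ to $y^L$ with force point $x^L_-$. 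Combining the two displays identifies the conditional law of $\eta^L$ given $\eta^R$, and the reflected argument identifies the conditional law of $\eta^R$ given $\eta^L$ with force point $x^R_+$ and parameter $\rho^R$.

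The main obstacle is precisely this conditional passage to the limit: one must upgrade the fixed-domain input convergence to a statement that is continuous as the approximating domain $\Omega^L_{\delta}$ varies with the random curve $\eta^R_{\delta}$, whose limit $\eta^R$ is a fractal boundary arc of $\Omega^L$. I would handle this by localizing --- stopping $\eta^L_{\delta}$ before it comes within a fixed distance of $\eta^R_{\delta}$, applying the input convergence in the then uniformly regular truncated domains, and removing the truncation using transience together with the a priori estimates to control $\eta^L$ near $\eta^R$ --- and by using the crossing bounds to rule out the two curves touching or pinching off degenerate regions. Once both conditional laws are identified, every subsequential limit coincides with the measure characterized in Theorem \ref{thm::slepair_lessthan4}; uniqueness forces convergence of the full family, and the identification part of that theorem (with $\rho^L=0$) gives the stated convergence of $\eta^L_{\delta}$ to $\hSLE_3(\rho^R)$.
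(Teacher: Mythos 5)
Your proposal is correct and follows essentially the same route as the paper: tightness via RSW crossing estimates (packaged in the paper through Theorem \ref{thm::cvg_pairs} and Lemma \ref{lem::ising_distance_tight}, the latter supplying the extremal-distance separation you describe as ``ruling out the curves touching''), identification of both conditional laws of any subsequential limit via the domain Markov property, Carath\'eodory convergence of the complementary domains, and the Dobrushin/free input convergence (Theorems \ref{thm::ising_cvg_minusplus} and \ref{thm::ising_cvg_minusfree}), and finally the uniqueness and identification parts of Theorem \ref{thm::slepair_lessthan4}. No substantive differences.
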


A similar conclusion as in Theorem \ref{thm::slepair_lessthan4} also holds for multiple $\SLE$ curves; however, we can only identify the marginal law for the case $\kappa=4$, see Remark \ref{rem::slepair_lessthan4_degenerate}, and
it is difficult to identify the marginal law of the curves for general $\kappa$. Instead, we could derive the marginal law for the degenerate case: when all the starting points of curves coincide and all the ending points of curves coincide, the marginal law of the curves becomes $\SLE_{\kappa}(\rho)$ process. 

\begin{proposition}\label{prop::slemultiple_degenerate}
Fix a Dobrushin domain $(\Omega; x, y)$ and an integer $n\ge 2$. Let $X_0^n(\Omega; x, y)$ be the collection of $n$ non-intersecting curves $(\eta_1;...;\eta_n)$ where $\eta_j$ is a continuous curve in $\Omega$ from $x$ to $y$ for $j\in \{1,...,n\}$ and that $\eta_j$ is to the right of $\eta_{j-1}$ and is to the left of $\eta_{j+1}$ with the convention that $\eta_0=(yx)$ and $\eta_{n+1}=(xy)$. Fix $\kappa\in (0,4]$. 
\begin{itemize}
\item (Existence and Uniqueness) There exists a unique probability measure on $X_0^n(\Omega; x, y)$ with the following property: for each $j\in\{1,...,n\}$, the conditional law of $\eta_j$ given $\eta_{j-1}$ and $\eta_{j+1}$ is $\SLE_{\kappa}$ in the region between $\eta_{j-1}$ and $\eta_{j+1}$. 
\item (Identification) Under this probability measure, the marginal law of $\eta_j$ is $\SLE_{\kappa}(\rho^L_j;\rho^R_j)$ for $j\in \{1,...,n\}$ where $\rho^L_j=2j-2,\rho^R_j=2n-2j$.
\end{itemize} 
\end{proposition}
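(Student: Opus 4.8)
The plan is to prove the two assertions in the order stated: uniqueness first, by a resampling-and-coupling argument, and then existence and the identification together, by induction on the number of curves $n$. For uniqueness, suppose $\PP$ and $\tilde\PP$ are two measures on $X_0^n(\Omega;x,y)$ with the stated conditional property. Both are stationary for the Markov chain that repeatedly selects an index $j$ and resamples $\eta_j$ from its conditional law, an $\SLE_{\kappa}$ in the region between $\eta_{j-1}$ and $\eta_{j+1}$. Since $\kappa\le 4$ these curves are a.s.\ simple, so the intermediate regions are genuine Dobrushin subdomains and the left-to-right ordering of $\eta_1,\dots,\eta_n$ is preserved; moreover $\SLE_{\kappa}$ admits a monotone coupling under inclusion of its domain. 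I would couple two runs of the chain started from a $\PP$-sample and a $\tilde\PP$-sample, using monotonicity to sandwich the two configurations between common extremal resamplings and force them to coalesce, exactly as in the existence--uniqueness part of Theorem \ref{thm::slepair_lessthan4}. Stationarity of both measures under an ergodic chain then gives $\PP=\tilde\PP$.

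For existence and identification I would induct on $n$, the base case $n=1$ being plain $\SLE_{\kappa}=\SLE_{\kappa}(0;0)$. Define a candidate $\nu_n$ by the cascade: sample $\eta_1$ as $\SLE_{\kappa}(2n-2)$ from $x$ to $y$ with force point at $x_+$ (a simple transient curve, since $2n-2>-2$ and $\kappa\le 4$), and conditionally on $\eta_1$ sample $(\eta_2;\dots;\eta_n)$ from $\nu_{n-1}$ in the component $R_1$ of $\Omega\setminus\eta_1$ bounded on the right by $(xy)$. Verifying that $\nu_n$ has the defining conditional property is immediate from the inductive hypothesis for every resampling except that of the leftmost curve: for $j\ge 3$ the region between $\eta_{j-1}$ and $\eta_{j+1}$ lies inside $R_1$ and is untouched by $\eta_1$, while the $j=2$ resampling is precisely the leftmost resampling of $\nu_{n-1}$ in $R_1$ with $\eta_1$ playing the role of the left boundary. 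Only the $j=1$ case, namely that $\eta_1$ given $\eta_2$ is a \emph{plain} $\SLE_{\kappa}$ in the region between $(yx)$ and $\eta_2$, requires a genuine reversal of the cascade order. For the marginals, the induction reduces everything to a single step: conditionally on $\eta_1$, the law of $\eta_j$ in $R_1$ is $\SLE_{\kappa}(2j-4;2n-2j)$ by the inductive hypothesis, and averaging over $\eta_1$ must raise the weight on the side facing $\eta_1$ from $2j-4$ to $2j-2$, the $+2$ being the contribution of the single adjacent curve $\eta_1$; the leftmost curve $\eta_1\sim\SLE_{\kappa}(0;2n-2)$ is correct by construction.

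Both the $j=1$ reversal and the marginal computation thus reduce to the following two-curve \emph{force-point shift}, which I expect to be the main obstacle: if $\gamma\sim\SLE_{\kappa}(\alpha;\beta)$ with force points at the seed lives in the region to the right of a random left boundary $\eta$, where $\eta$ is itself a simple curve from $x$ to $y$ forming a consistent two-curve configuration with $\gamma$, then averaging over $\eta$ raises the weight on the $\eta$-side by exactly $2$, producing $\SLE_{\kappa}(\alpha+2;\beta)$, and conditionally on $\gamma$ the reverse curve $\eta$ is a plain $\SLE_{\kappa}$. I would prove this by a Loewner/Girsanov computation: express the joint law of $(\eta,\gamma)$ as $\SLE_{\kappa}\times\SLE_{\kappa}$ reweighted by the common commutation (partition) function, read the two conditionals off that weight, and integrate out $\eta$. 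Because both curves issue from the same point $x$, the force points that $\gamma$ would feel from $\eta$ collapse onto $x_-$, and the additivity of the $\SLE_{\kappa}$ weights at a coincident force point turns $\alpha$ into $\alpha+2$; this collapse is exactly why the marginal in the degenerate case is a clean $\SLE_{\kappa}(\rho)$ rather than a process with separated force points. The delicate points are checking that the reweighting is a true martingale up to the relevant swallowing times, so that Girsanov applies and no mass escapes as $\eta$ and $\gamma$ approach each other or the endpoints, and handling the coincidence of all force points at the seed; once this lemma is in hand, the induction propagates it to every $\eta_j$, with the extremal curves $\eta_1,\eta_n$ covered directly by the cascade and its left--right reflection.
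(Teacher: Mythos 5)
The uniqueness half of your argument rests on the assertion that ``$\SLE_{\kappa}$ admits a monotone coupling under inclusion of its domain'', which you then use to sandwich two runs of the resampling chain and force coalescence. No such monotonicity is known, and it is not what the uniqueness part of Theorem \ref{thm::slepair_lessthan4} uses: Proposition \ref{prop::slepair_uniqueness_general} proceeds via Choquet's theorem on the compact convex set of stationary measures of an $\eps$-truncated chain, the absolute-continuity-versus-singularity dichotomy for extremal stationary measures, and a coupling lemma (Lemma \ref{lem::sle_close_positive}) giving positive probability that two resampled curves agree --- no stochastic ordering of $\SLE_{\kappa}$ in nested domains appears anywhere. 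Moreover that machinery is set up only for curves with distinct endpoints (the spaces $X_0^{\eps}$ and the hypothesis $\nu\ge\kappa/2-2$ keep the curves at positive extremal distance), whereas here all $n$ curves share both endpoints. The paper handles the degenerate case by induction on $n$: conditionally on $\eta_1$, the tuple $(\eta_2;\dots;\eta_n)$ satisfies the same resampling property in the component to the right of $\eta_1$, so the induction hypothesis pins down its conditional law and in particular gives that $\eta_2$ given $\eta_1$ is $\SLE_{\kappa}(0;2n-4)$ while $\eta_1$ given $\eta_2$ is plain $\SLE_{\kappa}$; Lemma \ref{lem::slemultiple} (whose own uniqueness is reduced to the non-degenerate Proposition \ref{prop::slepair_uniqueness_general}) then determines the joint law of $(\eta_1,\eta_2)$ and hence of the whole tuple. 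Your monotone-coupling step is a genuine gap: as written it requires an unproven domination property of $\SLE_{\kappa}$.

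Your cascade-plus-force-point-shift plan for existence and identification is structurally close to what is needed, since your ``force-point shift'' is essentially Lemma \ref{lem::slemultiple}: if $\eta^R$ given $\eta^L$ is $\SLE_{\kappa}(\nu;\rho^R)$ and $\eta^L$ given $\eta^R$ is $\SLE_{\kappa}(\rho^L;\nu)$, then marginally $\eta^R$ is $\SLE_{\kappa}(\rho^L+\nu+2;\rho^R)$. Two caveats. First, you state the shift only when the reverse conditional is a \emph{plain} $\SLE_{\kappa}$, but for the interior marginals you must apply it to the pair $(\eta_1,\eta_j)$ with $j\ge 3$, where the conditional law of $\eta_1$ given $\eta_j$ is $\SLE_{\kappa}(0;2j-4)$; you need the two-parameter version. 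Second, the paper does not prove this lemma by a Girsanov/commutation computation with coincident force points --- the uniform-integrability and seed-degeneracy issues you flag are real and are not addressed in your outline. Instead it realizes all $n$ curves at once as flow lines of a single $\GFF$ on $\HH$ with boundary data $-n\lambda$ on $\R_-$ and $n\lambda$ on $\R_+$ and angles $\theta_j=(n+1-2j)\lambda/\chi$, for which both the pairwise conditional laws and the marginals $\SLE_{\kappa}(2j-2;2n-2j)$ follow from the known flow-line interaction rules. If you replace the monotone-coupling step by the paper's induction and either carry out your Girsanov lemma in full generality or substitute the $\GFF$ construction, the argument closes.
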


In Theorem \ref{thm::slepair_lessthan4} and Proposition \ref{prop::slemultiple_degenerate}, we focus on $\kappa\in (0,4]$. Readers may wonder whether we have similar conclusion for $\kappa\in (4,8)$. In fact, we believe the conclusion in Theorem \ref{thm::slepair_lessthan4} also holds for $\kappa\in (4,8)$. However, both of the two parts are unknown to our knowledge. Whereas, we can still show a weaker version of Theorem \ref{thm::slepair_lessthan4} for the degenerate case when $\kappa\in (4,8)$, see Lemma \ref{lem::slemultiple}. By applying this result to FK-Ising model, we have the following conclusion. 

\begin{proposition}\label{prop::fkising_cvg_pair}
Let discrete domains $(\Omega_{\delta}; x^L_{\delta}, x^R_{\delta}, y^R_{\delta}, y^L_{\delta})$ on the square lattice approximate some Dobrushin domain $(\Omega; x, y)$ such that $x^L_{\delta}, x^R_{\delta}\to x$ and $y^L_{\delta}, y^R_{\delta}\to y$ as $\delta\to 0$. Consider the critical FK-Ising model in $\Omega_{\delta}$ with alternating boundary condition: free on $(x_{\delta}^Lx_{\delta}^R)$ and $(y_{\delta}^Ry_{\delta}^L)$, and wired on $(x_{\delta}^Ry_{\delta}^R)$ and $(y_{\delta}^Lx_{\delta}^L)$.
Conditioned on the event that there are two disjoint vertical dual-crossings, then there exists a pair of interfaces $(\eta^L_{\delta};\eta^R_{\delta})$ where $\eta^L_{\delta}$ (resp. $\eta^R_{\delta}$) is the interface connecting $x^L_{\delta}$ to $y^L_{\delta}$ (resp. connecting $x^R_{\delta}$ to $y^R_{\delta}$).
The law of $(\eta^L_{\delta}; \eta^R_{\delta})$ converges weakly to the unique pair of curves $(\eta^L;\eta^R)$ in $X^2_0(\Omega; x, y)$ with the following property: Given $\eta^L$, the conditional law of $\eta^R$ is an $\SLE_{16/3}$ conditioned not to hit $\eta^L$ except at the end points; given $\eta^R$, the conditional law of $\eta^L$ is an $\SLE_{16/3}$ conditioned not to hit $\eta^R$ except at the end points. In particular, the law of $\eta^L_{\delta}$ converges weakly to $\SLE_{\kappa}(\kappa-2)$ with $\kappa=16/3$ as $\delta\to 0$. 
\end{proposition}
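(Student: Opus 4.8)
The plan is to follow the same two-step strategy used to prove Theorem \ref{thm::ising_cvg_pair}, but with the single-interface input replaced by the known convergence of the FK-Ising interface with Dobrushin boundary condition to $\SLE_{16/3}$. First I would establish precompactness of the law of the discrete pair $(\eta^L_{\delta}; \eta^R_{\delta})$, and then identify every subsequential limit with the unique continuum pair characterized by Lemma \ref{lem::slemultiple}. Since that characterization pins down the limit uniquely, precompactness together with identification yields weak convergence of the whole family, and the marginal statement then follows from the identification part of Lemma \ref{lem::slemultiple}.

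For precompactness, I would invoke the crossing (RSW) estimates for the critical FK-Ising model together with the Kemp\"{a}inen--Smirnov regularity criterion to show that the laws of $\eta^L_{\delta}$ and of $\eta^R_{\delta}$ are each tight in the space of curves modulo reparametrization, hence the law of the pair is tight. On the conditioning event (two disjoint vertical dual-crossings) the two interfaces are well separated by the crossing estimates, so any subsequential limit is genuinely a pair of disjoint curves in $X^2_0(\Omega; x, y)$ with $\eta^L$ to the left of $\eta^R$. Because the endpoints collapse, $x^L_{\delta}, x^R_{\delta}\to x$ and $y^L_{\delta}, y^R_{\delta}\to y$, an equicontinuity argument near the merging endpoints is needed to ensure the limiting curves still emanate from the single point $x$ and terminate at the single point $y$.

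To identify a subsequential limit $(\eta^L; \eta^R)$, I would condition on $\eta^L_{\delta}$. By the domain Markov property of the FK-Ising model, the configuration in the connected component of $\Omega_{\delta}\setminus \eta^L_{\delta}$ carrying $(x^R_{\delta}y^R_{\delta})$ on its boundary is again a critical FK-Ising model with a Dobrushin-type boundary condition, so its exploration path $\eta^R_{\delta}$ converges to $\SLE_{16/3}$ in the corresponding slit domain; the restriction imposed by the two-disjoint-crossings event forces $\eta^R_{\delta}$ to avoid $\eta^L_{\delta}$, so the conditional limit is $\SLE_{16/3}$ conditioned not to hit $\eta^L$ except at the endpoints. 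By symmetry the same description holds with the roles of $L$ and $R$ exchanged. Hence every subsequential limit satisfies the resampling property of Lemma \ref{lem::slemultiple}, which identifies the pair uniquely and gives the marginal law of $\eta^L$ as $\SLE_{\kappa}(\kappa-2)$ with $\kappa=16/3$.

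The main obstacle is the regime $\kappa=16/3>4$, where the curves touch the boundary and one another. The event of two disjoint crossings, and hence the ``conditioned not to hit'' description of each conditional law, is singular in the limit: the conditional $\SLE_{16/3}$ genuinely approaches $\eta^L$, so one cannot simply pass to the limit in a positive-probability conditioning as one does for $\kappa\le 4$ in Theorem \ref{thm::slepair_lessthan4}. The delicate point is therefore to show that the discrete disjointness conditioning converges, uniformly enough in $\delta$, to the correct continuum conditional law of $\SLE_{16/3}$ forced to avoid $\eta^L$, and to verify that the reference pair produced by Lemma \ref{lem::slemultiple} is precisely this object. Controlling the interaction of the two interfaces near their common endpoints $x$ and $y$, where the disjointness constraint pushes them together, is exactly where the careful estimates are required; the remainder of the argument runs parallel to the proof of Theorem \ref{thm::ising_cvg_pair}.
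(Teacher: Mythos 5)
Your outline has the right overall shape (precompactness, identification of subsequential limits, uniqueness via Lemma \ref{lem::slemultiple}), and you correctly locate the central difficulty: for $\kappa=16/3>4$ the curves touch, the two-disjoint-crossings event degenerates, and ``conditioned not to hit $\eta^L$ except at the endpoints'' is a singular conditioning. But you leave exactly that difficulty unresolved, and it is not a technical footnote --- it is the whole content of the proposition. In the joint limit you propose, where $x^L_{\delta}, x^R_{\delta}\to x$ and $y^L_{\delta}, y^R_{\delta}\to y$ together with $\delta\to 0$, the probability of two disjoint vertical dual-crossings tends to $0$, so the RSW-based tightness arguments (which in Lemmas \ref{lem::ising_distance_tight} and \ref{lem::fkising_distance_tight} rely on the conditioning event having probability bounded below uniformly in $\delta$, via the extremal distance between the two free arcs) do not apply as stated; and your identification step asserts that the conditional limit ``is $\SLE_{16/3}$ conditioned not to hit $\eta^L$ except at the endpoints'' without defining that object or matching it to the hypotheses of Lemma \ref{lem::slemultiple}, which is phrased in terms of $\SLE_{\kappa}(\nu;\rho)$ conditional laws rather than singular conditionings.

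The paper circumvents all of this by changing the order of limits. Step one (Lemma \ref{lem::fkising_pair_unique}) proves the convergence for a fixed non-degenerate quad $(\Omega; x^L, x^R, y^R, y^L)$ with four distinct continuum points: there the conditioning event has probability bounded below uniformly in $\delta$ by Proposition \ref{prop::rcm_rsw_strong}, the tightness of Lemma \ref{lem::fkising_distance_tight} applies, and ``conditioned not to hit'' is an honest positive-probability conditioning. Step two sends $y^L, y^R\to y$ in the continuum and uses Lemma \ref{lem::sle_conditioned} to identify the limit of the conditioned law as the genuine process $\SLE_{\kappa}(\kappa-4)$; this is the key lemma your proposal is missing, since it converts the singular conditioning into an $\SLE_{\kappa}(\rho)$ process to which the uniqueness and marginal-law statements of Lemma \ref{lem::slemultiple} apply, yielding the marginal $\SLE_{\kappa}(\kappa-2)$. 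Step three sends $x^L, x^R\to x$ using continuity of $\SLE_{\kappa}(\rho)$ in the location of the force point. If you want to keep your direct route you would have to supply uniform-in-$\delta$ control of the interfaces under a vanishing-probability conditioning near the merging endpoints, which is precisely what the paper's reordering of limits is designed to avoid.
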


To end the introduction, we summarize the relation between $\hSLE_{\kappa}(\rho)$ and $\SLE_{\kappa}(\rho)$ process:
\begin{itemize}
\item When $\rho=-2$, $\hSLE_{\kappa}(\rho)$ process is the same as $\SLE_{\kappa}$.
\item When $\kappa=4$, $\hSLE_4(\rho)$ process is the same as $\SLE_4(\rho+2, -\rho-2)$ with force points $(x,y)$.
\item When $y\to \infty$, $\hSLE_{\kappa}(\rho)$ process degenerates to $\SLE_{\kappa}(\rho+2)$ with force point $x$.
\end{itemize}

\smallbreak
\noindent\textbf{Outline and relation to previous work.} 
We will introduce hypergeometric SLE in Section \ref{sec::hyperSLE}. 
There were various papers working on variants of hypergeometric SLE with different motivations, see \cite{ZhanReversibility, QianConformalRestrictionTrichordal}. The definitions may be different from ours. There are some technicalities that arise when introducing hypergeometric SLE that do not have been fully addressed previously. We include a self-contained introduction to $\hSLE$ with our motivation in Section \ref{sec::hyperSLE}, treat the technical difficulties and show Theorem \ref{thm::hyperSLE_reversibility}. 
We will prove Theorem \ref{thm::slepair_lessthan4} and Proposition \ref{prop::slemultiple_degenerate} in Section \ref{sec::slepair}. 
The uniqueness part in Theorem \ref{thm::slepair_lessthan4} was proved in \cite[Theorem 4.1]{MillerSheffieldIG2} and the existence part when $\rho^L=\rho^R=0$ was proved in \cite{LawlerPartitionFunctionsSLE, KozdronLawlerMultipleSLEs}. 
We will introduce Ising model in Section \ref{sec::ising} and prove Theomem \ref{thm::ising_cvg_pair}. We will introduce FK-Ising model in Section \ref{sec::fkising} and prove Proposition \ref{prop::fkising_cvg_pair}. 
In \cite{IzyurovIsingMultiplyConnectedDomains}, the author proved ``local" convergence of Ising interfaces to $\hSLE_3$ by constructing holomorphic observables. We will show Theorem \ref{thm::ising_cvg_pair} by Theorem \ref{thm::slepair_lessthan4} (without constructing any new observable). Our approach is ``global", the
method only requires the input of convergence with Dobrushin boundary condition and it also works for other lattice models.

\section{Preliminaries}
\subsection{Space of curves}
\label{subsec::space_curves}

A planar curve is a continuous mapping from $[0,1]$ to $\C$ modulo reparameterization. Let $X$ be the set of planar curves. The metric $d$ on $X$ is defined by 
\[d(\eta_1,\eta_2)=\inf_{\varphi_1,\varphi_2}\sup_{t\in[0,1]}|\eta_1(\varphi_1(t))-\eta_2(\varphi_2(t))|,\]
where the inf is over increasing homeomorphisms $\varphi_1,\varphi_2:[0,1]\to [0,1]$. The metric space $(X, d)$ is complete and separable. A simple curve is a continuous injective mapping from $[0,1]$ to $\C$ modulo reparameterization. 
Let $X_{\simple}$ be the subspace of simple curves and denote by $X_0$ its closure. The curves in $X_0$ may have multiple points but they do not have self-crossings. 

We call $(\Omega; x,y)$ a \textit{Dobrushin domain} if $\Omega$ is a non-empty simply connected proper subset of $\C$ and $x,y$ are two distinct boundary points. Denote by $(xy)$ the arc of $\partial\Omega$ from $x$ to $y$ counterclockwise. We say that a sequence of Dobrushin domains $(\Omega_{\delta}; a_{\delta}, b_{\delta})$ converges to a Dobrushin domain $(\Omega; a, b)$ in the \textit{Carath\'eodory sense} if $f_{\delta}\to f$ uniformly on any compact subset of $\HH$ where $f_{\delta}$ (resp. $f$) is the unique conformal map from $\HH$ to $\Omega_{\delta}$ (resp. $\Omega$) satisfying $f_{\delta}(0)=a_{\delta}, f_{\delta}(\infty)=b_{\delta}$ and $f_{\delta}'(\infty)=1$ (resp. $f(0)=a, f(\infty)=b, f'(\infty)=1$). 

Given a Dobrushin domain $(\Omega; x,y)$, let $X_{\simple}(\Omega; x,y)$ be the space of simple curves $\eta$ such that \[\eta(0)=x, \quad\eta(1)=y,\quad\eta(0,1)\subset\Omega.\]
Denote by $X_0(\Omega; x,y)$ the closure of $X_{\simple}(\Omega; x,y)$. 

We call $(\Omega; a, b, c, d)$ a \textit{quad (or topological rectangle)} if $\Omega$ a non-empty simply connected proper subset of $\C$ and $a, b, c, d$ are four distinct boundary points in counterclockwise order. Given a quad $(\Omega; a, b, c, d)$, we denote by $d_{\Omega}((ab), (cd))$ the extremal distance between $(ab)$ and $(cd)$ in $\Omega$. We say a sequence of quads $(\Omega_{\delta}; a_{\delta}, b_{\delta}, c_{\delta}, d_{\delta})$ converges to a quad $(\Omega; a, b, c, d)$ in the \textit{Carath\'{e}odory sense} if $f_{\delta}\to f$ uniformly on any compact subset of $\HH$ and  $\lim_{\delta}f_{\delta}^{-1}(b_{\delta})=f^{-1}(b)$ and $\lim_{\delta}f_{\delta}^{-1}(c_{\delta})=f^{-1}(c)$ 
where $f_{\delta}$ (resp. $f$) is the unique conformal map from $\HH$ to $\Omega_{\delta}$ (resp. $\Omega$) satisfying $f_{\delta}(0)=a_{\delta}, f_{\delta}(\infty)=d_{\delta}$ and $f_{\delta}'(\infty)=1$ (resp. $f(0)=a, f(\infty)=d, f'(\infty)=1$). 

Given a quad $(\Omega; x^L, x^R, y^R, y^L)$, let $X_{\simple}(\Omega; x^L, x^R, y^R, y^L)$ be the collection of pairs of simple curves $(\eta^L;\eta^R)$ such that $\eta^L\in X_{\simple}(\Omega; x^L, y^L)$ and $\eta^R\in X_{\simple}(\Omega; x^R, y^R)$ and that $\eta^L\cap\eta^R=\emptyset$. The definition of $X_0(\Omega; x^L, x^R, y^R, y^L)$ is a little bit complicate. 
Given a quad $(\Omega; x^L, x^R, y^R, y^L)$ and $\eps>0$, let $X_0^{\eps}(\Omega; x^L, x^R, y^R, y^L)$ be the set of pairs of curves $(\eta^L; \eta^R)$ such that 
\begin{itemize}
\item $\eta^L\in X_0(\Omega; x^L, y^L)$ and $\eta^R\in X_0(\Omega; x^R, y^R)$;
\item $d_{\Omega^L}(\eta^L, (x^Ry^R))\ge \eps$ where $\Omega^L$ the connected component of $\Omega\setminus \eta^L$ with $(x^Ry^R)$ on the boundary, 
and $\eta^R$ is contained in the closure of $\Omega^L$ ;
\item $d_{\Omega^R}(\eta^R, (y^Lx^L))\ge \eps$ where $\Omega^R$ the connected component of  $\Omega\setminus \eta^R$ with $(y^Lx^L)$ on the boundary, and $\eta^L$ is contained in the closure of $\Omega^R$.
\end{itemize}
Define the metric on $X_0^{\eps}(\Omega; x^L, x^R, y^R, y^L)$ by 
\[\LD((\eta^L_1, \eta^R_1), (\eta^L_2,\eta^R_2))=\max\{d(\eta^L_1,\eta^L_2), d(\eta^R_1,\eta^R_2)\}. \]
One can check $\LD$ is a metric and the space $X_0^{\eps}(\Omega; x^L, x^R, y^R, y^L)$ with $\LD$ is complete and separable. 
Finally, set 
\[X_0(\Omega; x^L, x^R, y^R, y^L)=\bigcup_{\eps>0}X_0^{\eps}(\Omega; x^L, x^R, y^R, y^L).\]
Note that $X_0(\Omega; x^L, x^R, y^R, y^L)$ is no longer complete. 
\smallbreak
Suppose $E$ is a metric space and $\LB_E$ is the Borel $\sigma$-field. Let $\LP$ be the space of probability measures on $(E, \LB_E)$. The Prohorov metric $d_{\LP}$ on $\LP$ is defined by  
\[d_{\LP}(\PP_1, \PP_2)=\inf\left\{\eps>0: \PP_1[A]\le \PP_2[A^{\eps}]+\eps, \PP_2[A]\le \PP_1[A^{\eps}]+\eps, \forall A\in\LB_E\right\}.\]
When $E$ is complete and separable, the space $\LP$ is complete and separable (\cite[Theorem 6.8]{BillingsleyConvergenceProbabilityMeasures}); moreover, a sequence $\PP_n$ in $\LP$ converges weakly to $\PP$ if and only if $d_{\LP}(\PP_n, \PP)\to 0$. 

Let $\Sigma$ be a family of probability measures on $(E,\LB_E)$. We call $\Sigma$ \textit{relatively compact} if every sequence of elements in $\Sigma$ contains a weakly convergent subsequence. We call $\Sigma$ \textit{tight} if, for every $\eps>0$, there exists a compact set $K_{\eps}$ such that $\PP[K_{\eps}]\ge 1-\eps$ for all $\PP\in\Sigma$. By Prohorov's Theorem (\cite[Theorem 5.2]{BillingsleyConvergenceProbabilityMeasures}), when $E$ is complete and separable, relative compactness is equivalent to tightness.

\subsection{Loewner chain}

We call a compact subset $K$ of $\overline{\HH}$ an \textit{$\HH$-hull} if $\HH\setminus K$ is simply connected. Riemann's Mapping Theorem asserts that there exists a unique conformal map $g_K$ from $\HH\setminus K$ onto $\HH$ such that
$\lim_{z\to\infty}|g_K(z)-z|=0$.
We call such $g_K$ the conformal map from $\HH\setminus K$ onto $\HH$ normalized at $\infty$ and we call $a(K):=\lim_{z\to \infty} z(g_t(z)-z)$ the \textit{half-plane capacity} of $K$. 

\textit{Loewner chain} is a collection of $\HH$-hulls $(K_{t}, t\ge 0)$ associated with the family of conformal maps $(g_{t}, t\ge 0)$ obtained by solving the Loewner equation: for each $z\in\mathbb{H}$,
\[\partial_{t}{g}_{t}(z)=\frac{2}{g_{t}(z)-W_{t}}, \quad g_{0}(z)=z,\]
where $(W_t, t\ge 0)$ is a one-dimensional continuous function which we call the driving function. Let $T_z$ be the \textit{swallowing time} of $z$ defined as $\sup\{t\ge 0: \min_{s\in[0,t]}|g_{s}(z)-W_{s}|>0\}$.
Let $K_{t}:=\overline{\{z\in\mathbb{H}: T_{z}\le t\}}$. Then $g_{t}$ is the unique conformal map from $H_{t}:=\mathbb{H}\backslash K_{t}$ onto $\mathbb{H}$ normalized at $\infty$. Since the half-plane capacity for $K_t$ is $2t$ for all $t\ge 0$, we say that the process $(K_t, t\ge 0)$ is parameterized by the half-plane capacity. We say that $(K_t, t\ge 0)$ can be generated by the continuous curve $(\eta(t), t\ge 0)$ if for any $t$, the unbounded connected component of $\HH\setminus\eta[0,t]$ coincides with $H_t=\HH\setminus K_t$. 

Here we discuss about the evolution of a point $y\in\R$ under $g_t$. We assume $y\ge 0$. There are two possibilities: if $y$ is not swallowed by $K_t$, then we define $Y_t=g_t(y)$; if $y$ is swallowed by $K_t$, then we define $Y_t$ to be the image of the rightmost of point of $K_t\cap\R$ under $g_t$. Suppose that $(K_t, t\ge 0)$ is generated by a continuous path $(\eta(t), t\ge 0)$ and that the Lebesgue measure of $\eta[0,\infty]\cap\R$ is zero. Then the process $Y_t$ is uniquely characterized by the following equation: 
\[Y_t=y+\int_0^t \frac{2ds}{Y_s-W_s},\quad Y_t\ge W_t,\quad \forall t\ge 0.\] 
In this paper, we may write $g_t(y)$ for the process $Y_t$. 

The convention of driving function can be defined for any simply connected domain via conformal transformation. 
Fix a Dobrushin domain $(\Omega; x, y)$ and let $\phi$ be some fixed conformal map from $\Omega$ onto $\HH$ such that $\phi(x)=0$ and $\phi(y)=\infty$. Suppose $\eta\in X_{\simple}(\Omega; x, y)$.  Then $\phi(\eta)$ is a continuous curve in $X_{\simple}(\HH; 0,\infty)$. Thus, if we parameterize $\phi(\eta)$ by its half-plane capacity, then it has a continuous driving process. We use the term the driving process of $\eta$ in $\Omega$ to indicate the driving process in half-plane capacity in $\HH$ after the transformation $\phi$. 

\subsection{Convergence of curves}

In this section, we first recall the main result of \cite{KemppainenSmirnovRandomCurves} and then show a similar result for pairs of curves.  Suppose $(Q; a, b, c, d)$ is a quad. 
We say that a curve $\eta$ \textit{crosses} $Q$ if there exists a subinterval $[s,t]$ such that $\eta(s,t)\subset Q$ and $\eta[s,t]$ intersects both $(ab)$ and $(cd)$. 
Fix a Dobrushin domain $(\Omega; x, y)$, 
for any curve $\eta$ in $X_0(\Omega; x,y)$ and any time $\tau$,  define $\Omega_{\tau}$ to be the connected component of $\Omega\setminus\eta[0,\tau]$ with $y$ on the boundary. 
Consider a quad $(Q, a, b, c, d)$ in $\Omega_{\tau}$ such that $(bc)$ and $(da)$ are contained in $\partial \Omega_{\tau}$. We say that $Q$ is \textit{avoidable} if it does not disconnect $\eta(\tau)$ from $y$ in $\Omega_{\tau}$.    

\begin{definition}
A family $\Sigma$ of probability measures on curves in $X_{\simple}(\Omega; x, y)$ is said to satisfy \textbf{Condition C2} if, for any $\eps>0$, there exists a constant $c(\eps)>0$ such that for any $\PP\in\Sigma$, any stopping time $\tau$, and any avoidable quad $(Q; a, b, c, d)$ in $\Omega_{\tau}$ such that $d_{Q}((ab), (cd))\ge c(\eps)$, we have 
\[\PP[\eta[\tau,1]\text{ crosses }Q\cond \eta[0,\tau]]\le 1-\eps.\]
\end{definition}

\begin{theorem}\label{thm::cvg_curves_chordal}
\cite[Corollary 1.7, Proposition 2.6]{KemppainenSmirnovRandomCurves}. 
Fix a Dobrushin domain $(\Omega; x, y)$. 
Suppose that $(\eta_n)_{n\in\N}$ is a sequence of curves in $X_{\simple}(\Omega; x, y)$ satisfying Condition C2. Denote by  $(W_n(t), t\ge 0)$ the driving process of $\eta_n$. Then 
\begin{itemize}
\item the family $(W_n)_{n\in\N}$ is tight in the metrisable space of continuous functions on $[0,\infty)$ with the topology of uniform convergence on compact subsets of $[0,\infty)$;
\item the family $(\eta_n)_{n\in\N}$ is tight in the space of curves $X$;
\item the family $(\eta_n)_{n\in\N}$, when each curve is parameterized by the half-plane capacity, is tight in the metrisable space of continuous functions on $[0,\infty)$ with the topology of uniform convergence on compact subsets of $[0,\infty)$.
\end{itemize}
 Moreover, if the sequence converges in any of the topologies above it also converges in the two other topologies and
 the limits agree in the sense that the limiting random curve is driven by the limiting driving function.
\end{theorem}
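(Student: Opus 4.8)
The plan is to follow the strategy of Kemppainen and Smirnov: extract from the single geometric input (Condition C2) one quantitative regularity estimate, and then run Arz\'ela--Ascoli together with Prohorov's theorem in each of the three topologies, finally checking that the three limits match through the Loewner correspondence. Throughout, the essential discipline is that every constant must depend only on $\eps$ (through the function $c(\eps)$) and not on the individual $\PP\in\Sigma$, so that the estimates survive the passage to a weak limit.

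First I would recast Condition C2 as an annulus-crossing bound. A conformal annulus $A=\{w: r<|w-z|<R\}$ of large modulus $\log(R/r)$ contains avoidable quads of large extremal distance, and conversely a quad with $d_Q((ab),(cd))$ large forces the curve across such an annulus; so each can be converted into the other up to universal constants, and C2 becomes: conditionally on $\eta[0,\tau]$, the curve crosses $A$ from its inner to its outer boundary with probability at most $1-\eps$. Because this bound is conditional on the past at an arbitrary stopping time, it iterates: the probability of $k$ successive disjoint crossings of a fixed annulus decays like $(1-\eps)^{k}$. This exponential decay is the workhorse of the whole argument.

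Second, I would turn this into tightness of the laws of $\eta$ in $(X,d)$ and into control of the parameterizations. The $k$-fold crossing bound controls small-scale oscillation, yielding a uniform-over-$\Sigma$ modulus-of-continuity estimate for $\eta$ that holds with probability tending to $1$ as the scale shrinks; tightness in $(X,d)$ then follows from the Arz\'ela--Ascoli characterization of precompact families of curves. The same estimate rules out bottlenecks: were the curve to travel a macroscopic distance during a short interval of capacity time, it would have to make many annulus crossings in a region of small harmonic measure, which is exponentially unlikely. This yields equicontinuity of $t\mapsto\eta(t)$ in the half-plane-capacity parameterization, uniformly over $\Sigma$, and hence tightness in the capacity-parameterized topology as well.

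Third, I would establish tightness of the driving function $W_t=g_t(\eta(t))$ and the equivalence of the three topologies. Conformal-distortion inputs (the Beurling estimate and the Koebe distortion theorem) bound the oscillation of $W$ over a capacity-time interval by the diameter of the corresponding arc of $\eta$; combined with the equicontinuity of $t\mapsto\eta(t)$ from the previous step this produces a uniform modulus of continuity for $W$, hence tightness in $C[0,\infty)$. For the equivalence, along a subsequence converging in all three senses one checks that the limiting curve, its limiting capacity parameterization, and the limiting driving function solve one and the same Loewner equation, and that the limiting Loewner chain is genuinely generated by the limiting continuous curve. The main obstacle is precisely this last point: upgrading the geometric no-bottleneck bound to genuine regularity of $W$ and to local connectivity of the limiting hulls, uniformly in $\PP\in\Sigma$ so that it passes to the limit, with the extra care needed near the endpoints $x,y$ where $\eta$ meets $\partial\Omega$ and where the extremal-distance formulation of C2 is exactly what supplies the required uniform control.
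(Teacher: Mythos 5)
The paper offers no proof of this statement: it is quoted directly from Kemppainen and Smirnov \cite[Corollary 1.7, Proposition 2.6]{KemppainenSmirnovRandomCurves}, so there is no in-paper argument to compare against. Your outline faithfully reconstructs the strategy of that reference --- recasting Condition C2 as an annulus-crossing bound that iterates at stopping times to give exponential decay of $k$-fold crossings, deducing uniform moduli of continuity for the curve in both parameterizations and for the driving function, and matching the three limits through the Loewner correspondence --- and this matches the sketch the author left commented out in the source (tightness of the modulus of continuity of the curve, of the driving process in capacity time, and of the ``visibility of the tip''). Be aware, though, that what you have written is a plan rather than a proof: the equivalence of the quad and annulus formulations, the passage from crossing estimates to equicontinuity, and above all the tip-visibility bound needed to show that the limiting hulls are generated by a continuous curve are each substantial technical steps that are asserted here but not carried out, so the statement still rests on the cited reference.
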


Next, we will explain a similar result for pairs of curves. Fix a quad $(\Omega; x^L, x^R, y^R, y^L)$. 

\begin{definition}
A family $\Sigma$ of probability measures on pairs of curves in $X_{\simple}(\Omega; x^L, x^R, y^R, y^L)$ is said to satisfy \textbf{Condition C2} if, for any $\eps>0$, there exists a constant $c(\eps)>0$ such that for any $\PP\in\Sigma$, the following holds.
Given any $\eta^L$-stopping time $\tau^L$ and any $\eta^R$-stopping time $\tau^R$,  and any avoidable quad $(Q^R; a^R, b^R, c^R, d^R)$ for $\eta^R$ in $\Omega\setminus (\eta^L[0,\tau^L]\cup\eta^R[0,\tau^R])$ such that $d_{Q^R}((a^Rb^R), (c^Rd^R))\ge c(\eps)$, and any avoidable quad $(Q^L; a^L, b^L, c^L, d^L)$ for $\eta^L$ in $\Omega\setminus (\eta^L[0,\tau^L]\cup\eta^R[0,\tau^R])$ such that $d_{Q^L}((a^Lb^L), (c^Ld^L))\ge c(\eps)$, we have 
\[\PP\left[\eta^R[\tau^R, 1]\text{ crosses }Q^R\cond \eta^L[0,\tau^L], \eta^R[0,\tau^R]\right]\le 1-\eps,\]
\[\PP\left[\eta^L[\tau^L, 1]\text{ crosses }Q^L\cond \eta^L[0,\tau^L], \eta^R[0,\tau^R]\right]\le 1-\eps.\]
\end{definition}

\begin{theorem}\label{thm::cvg_pairs}
Suppose that $\{(\eta^L_n;\eta^R_n)\}_{n\in\N}$ is a sequence of pairs of curves in $X_{\simple}(\Omega; x^L, x^R, y^R, y^L)$ and denote their laws by $\{\PP_n\}_{n\in\N}$. Let $\Omega_n^L$ be the connected component of $\Omega\setminus\eta_n^L$ with $(x^Ry^R)$ on the boundary and $\Omega_n^R$ be the connected component of $\Omega\setminus\eta_n^R$ with $(y^Lx^L)$ on the boundary. 
Define, for each $n$, 
\[\LD^L_n=d_{\Omega_n^L}(\eta^L_n, (x^Ry^R)),\quad \LD^R_n=d_{\Omega_n^R}(\eta^R_n, (y^Lx^L)).\]
Assume that the family $\{(\eta^L_n; \eta^R_n)\}_{n\in\N}$ satisfies Condition C2 and that the sequence of random variables $\{(\LD^L_n; \LD^R_n)\}_{n\in\N}$ is tight in the following sense: for any $u>0$, there exists $\eps>0$ such that 
\[\PP_n\left[\LD^{L}_n\ge\eps, \LD^R_n\ge\eps\right]\ge 1-u,\quad \forall n.\]
Then the sequence $\{(\eta^L_n;\eta^R_n)\}_{n\in\N}$ is relatively compact in $X_0(\Omega; x^L, x^R, y^R, y^L)$. 
\end{theorem}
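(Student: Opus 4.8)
\section*{Proof proposal for Theorem \ref{thm::cvg_pairs}}

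The plan is to bootstrap from the single-curve convergence result, Theorem \ref{thm::cvg_curves_chordal}, applied to each marginal, and then to use the extremal-distance tightness to confine all subsequential limits to the space $X_0(\Omega; x^L, x^R, y^R, y^L)$.

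\emph{Step 1 (marginal tightness).} I would first argue that the pair Condition C2 implies the single-curve Condition C2 for each of the two marginal families, namely the laws of $\eta^L_n$ and of $\eta^R_n$. Specializing the hypothesis to $\tau^R=0$, the conditioning set $\eta^R[0,\tau^R]$ reduces to the boundary point $x^R\in\partial\Omega$, which does not alter the open domain $\Omega\setminus\eta^L[0,\tau^L]$ nor the notion of an avoidable quad for $\eta^L$; thus the conditional crossing bound becomes exactly the single-curve estimate $\PP[\eta^L[\tau^L,1]\text{ crosses }Q^L\cond\eta^L[0,\tau^L]]\le 1-\eps$ in the fixed Dobrushin domain $(\Omega; x^L, y^L)$, with the same constant $c(\eps)$ uniformly in $n$. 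The symmetric choice $\tau^L=0$ handles $\eta^R$. Theorem \ref{thm::cvg_curves_chordal} then yields that the families of laws of $\eta^L_n$ and of $\eta^R_n$ are each tight in $X$.

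\emph{Step 2 (joint tightness and confinement to a stratum).} Given $u>0$, I would use Step 1 to pick compacts $K^L,K^R\subset X$ with $\PP_n[\eta^L_n\notin K^L]\le u/3$ and $\PP_n[\eta^R_n\notin K^R]\le u/3$ for all $n$, and use the hypothesized tightness of $\{(\LD^L_n;\LD^R_n)\}$ to pick $\eps>0$ with $\PP_n[\LD^L_n\ge\eps,\ \LD^R_n\ge\eps]\ge 1-u/3$ for all $n$. Because each $(\eta^L_n;\eta^R_n)$ is a pair of disjoint simple curves and the marked points $x^L,x^R,y^R,y^L$ are in counterclockwise order, the containment conditions in the definition of $X_0^{\eps}(\Omega; x^L, x^R, y^R, y^L)$ are automatic, so on the event $\{\LD^L_n\ge\eps,\ \LD^R_n\ge\eps\}$ one has $(\eta^L_n;\eta^R_n)\in X_0^{\eps}$. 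Set $K_{\eps}:=X_0^{\eps}(\Omega; x^L, x^R, y^R, y^L)\cap(K^L\times K^R)$. Since the metric $\LD$ on $X_0^{\eps}$ is exactly the maximum metric generating the product topology of $X\times X$, and since $X_0^{\eps}$, being complete, is closed in the complete space $X\times X$, the set $K_{\eps}$ is a closed subset of the compact set $K^L\times K^R$, hence compact in $(X_0^{\eps},\LD)$. A union bound gives $\PP_n[(\eta^L_n;\eta^R_n)\in K_{\eps}]\ge 1-u$ for all $n$.

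\emph{Step 3 (conclusion).} The compacts $K_{\eps}\subset X_0^{\eps}\subset X_0(\Omega; x^L, x^R, y^R, y^L)$ produced for each $u$ witness the tightness of $\{\PP_n\}$ as a family of measures on $X_0(\Omega; x^L, x^R, y^R, y^L)$. Invoking only the direction of Prohorov's theorem that tightness implies relative compactness, which is valid in an arbitrary metric space, shows that $\{\PP_n\}$ is relatively compact; any subsequential weak limit is supported on $\bigcup_u K_{\eps}\subset X_0$ and is therefore a probability measure on $X_0(\Omega; x^L, x^R, y^R, y^L)$.

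I expect the main obstacle to be the reduction in Step 1: one must verify carefully that collapsing $\tau^R$ (resp.\ $\tau^L$) to $0$ genuinely reproduces the Kemppainen--Smirnov hypothesis in the fixed Dobrushin domain, in particular that deleting the boundary point $x^R$ (resp.\ $x^L$) leaves the avoidable-quad geometry unchanged. The only other delicate point is that the union $X_0$ is not complete, so one must use Prohorov's theorem only in the direction that does not require completeness; the completeness assumption enters solely through the closedness of each stratum $X_0^{\eps}$ in $X\times X$, which is what makes the sets $K_{\eps}$ compact.
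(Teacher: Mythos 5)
Your proposal is correct and follows essentially the same route as the paper: reduce the pair Condition C2 to the single-curve Condition C2 for each marginal, apply Theorem \ref{thm::cvg_curves_chordal} to get marginal tightness, and use the extremal-distance hypothesis to confine the pair to $X_0^{\eps}(\Omega; x^L, x^R, y^R, y^L)$ with probability at least $1-u$. The only difference is cosmetic — the paper extracts a convergent subsequence and applies Skorohod representation where you package the same ingredients as compact sets $K_{\eps}$ and invoke the tightness-implies-relative-compactness direction of Prohorov — and your version is in fact slightly more careful about the points the paper glosses over (the $\tau^R=0$ reduction and the closedness of $X_0^{\eps}$ inside $X\times X$).
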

\begin{proof}
By Theorem \ref{thm::cvg_curves_chordal}, we know that there is subsequence $n_k\to\infty$ such that $\eta^L_{n_k}$ (resp. $\eta^R_{n_k}$) converges weakly in all three topologies in Theorem \ref{thm::cvg_curves_chordal}. By Skorohod Represnetation Theorem, we could couple all $(\eta^L_{n_k};\eta^R_{n_k})$ in a common space so that $\eta^L_{n_k}\to\eta^L$ and $\eta^R_{n_k}\to\eta^R$ almost surely. 
For $\eps>0$, define 
\[K_{\eps}=\left\{(\eta^L; \eta^R)\in X_{\simple}(\Omega; x^L, x^R, y^R, y^L): d_{\Omega^L}(\eta^L, (x^Ry^R))\ge\eps, d_{\Omega^R}(\eta^R, (y^Lx^L))\ge\eps\right\}.\]
From the assumption, we know that, for any $u>0$, there exists $\eps>0$ such that $\inf_n\PP_n[K_{\eps}]\ge 1-u$. 
Therefore, with probability at least $1-u$, the sequence $(\eta^L_{n_k}; \eta^R_{n_k})$ converges to $(\eta^L; \eta^R)$ in $X^{\eps}_0(\Omega; x^L, x^R, y^R, y^L)\subset X_0(\Omega; x^L, x^R, y^R, y^L)$. This is true for any $u>0$, thus we have $(\eta^L_{n_k};\eta^R_{n_k})$ converges to $(\eta^L;\eta^R)$ in $X_0(\Omega; x^L, x^R, y^R, y^L)$ almost surely. 
\end{proof}

\subsection{SLE and SLE$_{\kappa}(\rho)$}

\textit{Schramm Loewner Evolution} $\SLE_{\kappa}$ is the random Loewner chain $(K_{t}, t\ge 0)$ driven by $W_t=\sqrt{\kappa}B_t$ where $(B_t, t\ge 0)$ is a standard one-dimensional Brownian motion.
In \cite{RohdeSchrammSLEBasicProperty}, the authors prove that $(K_{t}, t\ge 0)$ is almost surely generated by a continuous transient curve, i.e. there almost surely exists a continuous curve $\eta$ such that for each $t\ge 0$, $H_{t}$ is the unbounded connected component of $\mathbb{H}\backslash\eta[0,t]$ and that $\lim_{t\to\infty}|\eta(t)|=\infty$. There are phase transitions at $\kappa=4$ and $\kappa=8$: $\SLE_{\kappa}$ are simple curves when $\kappa\in [0,4]$; they have self-touching when $\kappa\in (4,8)$; and they are space-filling when $\kappa\ge 8$. 

It is clear that $\SLE_{\kappa}$ is scaling invariant, thus we can define $\SLE_{\kappa}$ in any simply connected domain $D$ from one boundary point $x$ to another boundary point $y$ by the conformal image: let $\phi$ be a conformal map from $\HH$ onto $D$ that sends $0$ to $x$ and $\infty$ to $y$, then define $\phi(\eta)$ to be $\SLE_{\kappa}$ in $D$ from $x$ to $y$. 
For $\kappa\in (0,8)$, the curves $\SLE_{\kappa}$ enjoys \textit{reversibility}: let $\eta$ be an $
 \SLE_{\kappa}$ in $D$ from $x$ to $y$, then the time-reversal of $\eta$ has the same law as $\SLE_{\kappa}$ in $D$ from $y$ to $x$. The reversibility for $\kappa\in (0,4]$ was proved in \cite{ZhanReversibility}, and it was proved for $\kappa\in (4,8)$ in \cite{MillerSheffieldIG3}. 
 \smallbreak
 \textit{Hypergeometric functions} are defined for $a,b\in\R$, $c\in\R\setminus\{0,-1,-2,-3,...\}$ and for $|z|<1$ by 
 \[\hF(a,b,c;z)=\sum_{n=0}^{\infty}\frac{(a)_n(b)_n}{n!(c)_n}z^n,\]
where $(q)_n$ is the Pochhammer symbol defined by $(q)_n=1$ for $n=0$ and $(q)_n=q(q+1)\cdots(q+n-1)$ for $n\ge 1$. 
When $c>a+b$, denote by $\Gamma$ the Gamma function, we have (see \cite[Equation (15.1.20)]{AbramowitzHandbook}),
\[\hF(a,b,c;1)=\frac{\Gamma(c)\Gamma(c-a-b)}{\Gamma(c-a)\Gamma(c-b)}.\]
The hypergeometric function is a solution of Euler's hypergeometric differential equation:
\[z(1-z)F''(z)+(c-(a+b+1)z)F'(z)-abF(z)=0.\]
In this paper, we focus on $\kappa\in (0,8)$, $\rho\in\R$, and
\begin{equation}\label{eqn::hSLE_hyperF}
F(z):=\hF\left(\frac{2\rho+4}{\kappa}, 1-\frac{4}{\kappa}, \frac{2\rho+8}{\kappa};z\right).
\end{equation}
When $\kappa\in (0,8), \rho>(-4)\vee(\kappa/2-6)$, we have $F(1)\in (0,\infty)$. In particular, $F$ is smooth for $z\in (-1,1)$ and is continuous for $z\in (-1,1]$. 

\begin{lemma}\label{lem::hypersle_mart}
Fix $\kappa\in (0,8), \rho\in\R$, suppose $\eta$ is an $\SLE_{\kappa}$ in $\HH$ from 0 to $\infty$ and $(g_t,t\ge 0)$ is the corresponding family of conformal maps. Fix $0<x<y$ and let $T_x$ be the swallowing time of $x$. 
Define, for $t<T_x$,  
\[J_t=\frac{g_t'(x)g_t'(y)}{(g_t(y)-g_t(x))^{2}},\quad Z_t=\frac{g_t(x)-W_t}{g_t(y)-W_t}.\]
Let $F$ be defined through (\ref{eqn::hSLE_hyperF}). Then the following process is a local martingale:
\[M_t:=Z_t^a J_t^bF(Z_t)\one_{\{t<T_x\}},\quad\text{where }a=\frac{\rho+2}{\kappa}, \quad
b=\frac{(\rho+2)(\rho+6-\kappa)}{4\kappa}.\]
\end{lemma}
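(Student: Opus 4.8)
The plan is to show directly that $M_t$ is a local martingale by computing its Itô differential and verifying that the drift term vanishes, using the fact that $F$ solves Euler's hypergeometric ODE. First I would record the stochastic dynamics of the two basic processes $Z_t$ and $J_t$ under the Loewner flow driven by $W_t = \sqrt{\kappa}\,B_t$. Writing $X_t = g_t(x) - W_t$ and $Y_t = g_t(y) - W_t$, the Loewner equation gives $dX_t = \tfrac{2}{X_t}\,dt - dW_t$ and $dY_t = \tfrac{2}{Y_t}\,dt - dW_t$, while $d\log g_t'(x) = -\tfrac{2}{X_t^2}\,dt$ and similarly for $y$. From these, $Z_t = X_t/Y_t$ and $J_t = g_t'(x)g_t'(y)/(Y_t - X_t)^2$ are smooth functions of $(X_t, Y_t, g_t'(x), g_t'(y))$, so Itô's formula yields explicit expressions for $dZ_t$ and $d\log J_t$. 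I expect $dZ_t$ to take the form $dZ_t = \mu(Z_t)\,dt + \sigma(Z_t)\,dB_t$ with drift and diffusion coefficients that are rational in $Z_t$ after dividing through by $Y_t$ (using scale covariance, everything should depend only on the ratio $Z_t$).

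Next I would apply Itô's formula to $M_t = Z_t^a J_t^b F(Z_t)$, treating it as a function of $(Z_t, J_t)$. Since $J_t$ has finite variation (it is driven only by $dt$ terms), the quadratic-variation contributions come solely from $Z_t$. Collecting terms, the drift of $M_t$ will be a sum of three pieces: one from the $Z_t^a$ factor, one from the $J_t^b$ factor, and one from $F(Z_t)$, together with the second-order term $\tfrac12 \sigma(Z_t)^2 \,\partial_{ZZ}$ acting on $Z_t^a F(Z_t)$. The goal is to choose $a$ and $b$ so that, after factoring out $Z_t^a J_t^b$ and the diffusion coefficient, the bracketed drift reduces to a constant multiple of the hypergeometric operator $z(1-z)F'' + (c - (a'+b'+1)z)F' - a'b'F$ applied to $F$, with the parameters $(a',b',c') = \big(\tfrac{2\rho+4}{\kappa},\, 1 - \tfrac{4}{\kappa},\, \tfrac{2\rho+8}{\kappa}\big)$ matching those in the definition of $F$. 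Since $F$ solves exactly this ODE, that piece vanishes. The specific values $a = \tfrac{\rho+2}{\kappa}$ and $b = \tfrac{(\rho+2)(\rho+6-\kappa)}{4\kappa}$ are precisely what is needed to cancel the remaining lower-order terms (the $F'$ and $F$ contributions not coming from the ODE operator, plus the drift from $J_t^b$).

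The main obstacle is bookkeeping: one must carefully match the coefficients of $F''(Z_t)$, $F'(Z_t)$, and $F(Z_t)$ separately after the computation, and confirm that the exponents $a$ and $b$ force the $F'$- and $F$-coefficients into exactly the hypergeometric combination. The coefficient of $F''$ fixes the overall normalization of the diffusion term; the coefficient of $F'$ then determines $c'$ (equivalently pins down $a$), and the coefficient of $F$ determines the product $a'b'$ (equivalently pins down $b$). I would organize the verification by substituting $z = Z_t$ and checking that the three algebraic identities among $\kappa, \rho, a, b$ hold; these are elementary but must be done with care about signs and factors of $\kappa$. Finally, I would note that the indicator $\one_{\{t < T_x\}}$ and transience of the flow ensure the computation is valid up to the swallowing time $T_x$, so $M_t$ is a genuine local martingale on $\{t < T_x\}$; no boundary behavior of $F$ at $z=1$ is needed here since $Z_t \in (0,1)$ for $t < T_x$.
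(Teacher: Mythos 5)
Your proposal is correct and follows essentially the same route as the paper: compute $dZ_t$ and $dJ_t$ from the Loewner flow (noting that $J_t$ has finite variation), apply It\^{o}'s formula to the product $Z_t^a J_t^b F(Z_t)$, and kill the drift by reducing it to Euler's hypergeometric ODE satisfied by $F$. The paper merely organizes the same computation by first deriving the second-order ODE that a function $\psi$ must satisfy for $J_t^b\psi(Z_t)$ to be a local martingale and then verifying that $\psi(z)=z^aF(z)$ solves it.
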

\begin{proof}
By It\^{o}'s formula, one can check
\[dJ_t=\frac{-2J_t}{(g_t(y)-W_t)^2}\left(\frac{1}{Z_t}-1\right)^2dt,\quad dZ_t=\frac{(Z_t-1)dW_t}{g_t(y)-W_t}+\frac{(1-Z_t)(2+(2-\kappa)Z_t)dt}{Z_t(g_t(y)-W_t)^2}.\]
Therefore, the process $J_t^b\psi(Z_t)\one_{\{t<T_x\}}$ is a local martingale if $\psi$ is twice-differentiable and satisfies
\[\kappa z^2(1-z)\psi''(z)+2z(2+(2-\kappa)z)\psi'(z)-4b(1-z)\psi=0.\]
One can check that $\psi(z):=z^{a}F(z)$ satisfies this ODE and hence $M_t$ is a local martingale. Moreover, we have
\[dM_t=M_t\left(\frac{a}{W_t-g_t(x)}+\frac{-a}{W_t-g_t(y)}-\frac{F'(Z_t)}{F(Z_t)}\left(\frac{1-Z_t}{g_t(y)-W_t}\right)\right)dW_t.\]
We will show that $M_t$ is actually a uniform integrable martingale when $\rho\ge \kappa/2-4$ in Proposition \ref{prop::hypersle_mart}. 
\end{proof}

\smallbreak

\textit{$\SLE_{\kappa}(\underline{\rho})$ processes} are variants of $\SLE_{\kappa}$ where one keeps track of multiple points on the boundary. Suppose $\underline{x}^L=(x^{l, L}<\cdots<x^{1,L})$ where $x^{1,L}\le 0$ and $\underline{x}^R=(x^{1,R}<\cdots<x^{r,R})$ where $x^{1,R}\ge 0$ and $\underline{\rho}^L=(\rho^{l,L},\cdots, \rho^{1,L}),\underline{\rho}^R=(\rho^{1,R},\cdots, \rho^{r,R})$ where $\rho^{i,q}\in\R$ for $q\in\{L, R\}$ and $i\in\N$. An $\SLE_{\kappa}(\underline{\rho}^L;\underline{\rho}^R)$ process with force points $(\underline{x}^L;\underline{x}^R)$ is the Loewner evolution driven by $W_t$ which is the solution to the system of integrated SDEs: 
\begin{align}\label{eqn::sle_rho_sde}
W_t&=\sqrt{\kappa}B_t+\sum_i\int_0^t\frac{\rho^{i,L} ds}{W_s-V_s^{i,L}}+\sum_i\int_0^t\frac{\rho^{i,R} ds}{W_s-V_s^{i,R}},\\
V^{i,q}_t&=x^{i,q}+\int_0^t\frac{2ds}{V^{i,q}_s-W_s},\quad \text{for }q\in\{L, R\}, i\in\N,\notag
\end{align}
where $B_t$ is one-dimensional Brownian motion. Define the \textit{continuation threshold} to be the infimum of the time $t$ for which 
\[\text{either}\quad \sum_{i: V^{i,L}_t=W_t}\rho^{i,L}\le -2,\quad \text{or}\quad \sum_{i: V^{i,R}_t=W_t}\rho^{i,R}\le -2.\]
The process is well-defined up to the continuation threshold, and it is generated by continuous curve up to and including the continuation threshold, see \cite{MillerSheffieldIG1}. 

In this paper, we only use $\SLE_{\kappa}(\underline{\rho})$ with two force points: $\SLE_{\kappa}(\rho^L;\rho^R)$ with force points $(x^L\le 0\le x^R)$ or $\SLE_{\kappa}(\rho,\nu)$ with force points $(0\le x<y)$. To simply notations, we only discuss properties of these two kinds of processes. The behavior of $\SLE_{\kappa}(\rho,\nu)$ varies according to different $\rho,\nu$, see \cite[Lemma 15]{DubedatSLEDuality}. We list some of them that will be helpful later. 
\begin{itemize}
\item If $\rho\ge\kappa/2-2$, the curve never hits the interval $[x,y)$. If $\rho+\nu\ge \kappa/2-2$, the curve never hits the interval $[y,\infty)$. 
\item If $\rho>-2$ and $\rho+\nu\in (\kappa/2-4, \kappa/2-2)$, the curve hits the interval $(y,\infty)$ at finite time. 
\item If $\rho>-2$ and $\rho+\nu\le \kappa/2-4$, the curve accumulates at the point $y$ at finite time.
\end{itemize}
By Girsanov's Theorem, the law of $\SLE_{\kappa}(\rho,\nu)$ process can be obtained by weighting the law of ordinary $\SLE_{\kappa}$, see \cite[Theorem 6]{SchrammWilsonSLECoordinatechanges}:
\begin{lemma}\label{lem::sle_rho_mart}
Fix $\kappa\in (0,8)$ and $0<x<y$. Let $\eta$ be an $\SLE_{\kappa}$ in $\HH$ from 0 to $\infty$ and $(g_t,t\ge 0)$ be the corresponding family of conformal maps. Define 
\begin{align*}
M_t(x,y)&=g_t'(x)^{\rho(\rho+4-\kappa)/(4\kappa)}(g_t(x)-W_t)^{\rho/\kappa}\\
&\times g_t'(y)^{\nu(\nu+4-\kappa)/(4\kappa)}(g_t(y)-W_t)^{\nu/\kappa}\times (g_t(y)-g_t(x))^{\rho\nu/(2\kappa)}.
\end{align*}
Then $M_t(x,y)$ is a local martingale for $\SLE_{\kappa}$ and the law of $\SLE_{\kappa}$ weighted by $M_t(x,y)$ is equal to the law of $\SLE_{\kappa}(\rho,\nu)$ with force points $(x,y)$ up to the swallowing time of $x$. 
\end{lemma}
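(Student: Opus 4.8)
\emph{Proof proposal.} The plan is a direct Itô/Girsanov computation of the type used for $\SLE_{\kappa}(\rho,\nu)$ coordinate changes, following \cite{SchrammWilsonSLECoordinatechanges}. First I would record the evolution of the relevant quantities under the Loewner flow of $\SLE_{\kappa}$: writing $V^x_t=g_t(x)$ and $V^y_t=g_t(y)$, we have
\[dV^x_t=\frac{2\,dt}{V^x_t-W_t},\qquad dg_t'(x)=\frac{-2\,g_t'(x)}{(V^x_t-W_t)^2}\,dt,\]
together with the analogous equations at $y$, while $dW_t=\sqrt{\kappa}\,dB_t$ so that $(dW_t)^2=\kappa\,dt$. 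These hold for $t$ less than the swallowing time $T_x$ of $x$, which is where I would work throughout.

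Next I would apply Itô's formula to $\log M_t$, differentiating each of the five factors separately. The two factors $(V^x_t-W_t)^{\rho/\kappa}$ and $(V^y_t-W_t)^{\nu/\kappa}$ are the only ones contributing a $dW_t$ term, and they produce the martingale part
\[\left(\frac{\rho/\kappa}{W_t-V^x_t}+\frac{\nu/\kappa}{W_t-V^y_t}\right)dW_t.\]
Collecting the drift of $d\log M_t$ and adding the Itô correction $\tfrac12(d\log M_t)^2=\tfrac{\kappa}{2}(\cdots)^2\,dt$ coming from this quadratic variation, I would check that the $(V^x_t-W_t)^{-2}$, the $(V^y_t-W_t)^{-2}$, and the cross $(V^x_t-W_t)^{-1}(V^y_t-W_t)^{-1}$ terms cancel in pairs. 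The cancellation of the pure $x$-terms is precisely what fixes the exponent $\rho(\rho+4-\kappa)/(4\kappa)$ on $g_t'(x)$ (and similarly at $y$), while the cancellation of the cross term is what fixes the exponent $\rho\nu/(2\kappa)$ on $(V^y_t-V^x_t)$; here one uses the partial-fraction identity $\frac{1}{V^y_t-W_t}-\frac{1}{V^x_t-W_t}=-\frac{V^y_t-V^x_t}{(V^x_t-W_t)(V^y_t-W_t)}$. This yields $dM_t=M_t(\cdots)\,dW_t$ with vanishing drift, establishing the local martingale property.

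Finally, for the identification of the weighted law I would invoke Girsanov's theorem. Since $M_t$ is a positive local martingale with $dM_t/M_t=\theta_t\,dB_t$, where $\theta_t=\frac{\rho/\sqrt{\kappa}}{W_t-V^x_t}+\frac{\nu/\sqrt{\kappa}}{W_t-V^y_t}$, weighting the law of $\SLE_{\kappa}$ by $M_t$ makes $\tilde B_t:=B_t-\int_0^t\theta_s\,ds$ a Brownian motion, under which $dW_t=\sqrt{\kappa}\,d\tilde B_t+\big(\frac{\rho}{W_t-V^x_t}+\frac{\nu}{W_t-V^y_t}\big)dt$. This is exactly the driving SDE of $\SLE_{\kappa}(\rho,\nu)$ with force points $(x,y)$, so the two laws agree up to $T_x$.

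The main obstacle is not conceptual but lies in the bookkeeping of Step 2, together with the standard caveat that $M_t$ is only a local martingale: to apply Girsanov rigorously one must localize, performing the change of measure on the stopped processes $M_{t\wedge\tau_n}$ for a sequence of stopping times $\tau_n\uparrow T_x$ along which $M$ is bounded, and only then pass to the limit. This is routine but is the one place where care beyond algebra is required.
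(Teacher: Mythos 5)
Your proposal is correct and is essentially the standard argument behind this lemma, which the paper does not prove itself but attributes to \cite[Theorem 6]{SchrammWilsonSLECoordinatechanges}: the It\^{o} computation showing that the drift terms cancel precisely for the stated exponents (yielding $dM_t=M_t\bigl(\tfrac{\rho/\kappa}{W_t-V^x_t}+\tfrac{\nu/\kappa}{W_t-V^y_t}\bigr)dW_t$), followed by a localized Girsanov change of measure identifying the weighted law with $\SLE_{\kappa}(\rho,\nu)$ up to $T_x$. Your closing remark about localizing along stopping times where $M$ is bounded is exactly the right caveat, and it is also how the paper handles the analogous issue for the hypergeometric martingale in Proposition \ref{prop::hypersle_mart}.
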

\begin{lemma}\label{lem::sle_conditioned}
Fix $\kappa\in (4,8)$. For $y>1$, let $\PP_{y}$ be the law of $\SLE_{\kappa}$ conditioned not to hit the interval $(1,y)$. Then $\PP_{y}$ converges weakly to $\SLE_{\kappa}(\kappa-4)$ as $y\to\infty$. 
\end{lemma}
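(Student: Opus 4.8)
The plan is to realize $\PP_y$ as $\SLE_\kappa$ reweighted by the conditional probability of avoiding $(1,y)$, to compute this weight explicitly enough to extract its asymptotics, and to show that as $y\to\infty$ it converges to the single-force-point martingale of Lemma~\ref{lem::sle_rho_mart} that produces $\SLE_\kappa(\kappa-4)$ with force point $1$.

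First I would set up the conditioning martingale. Let $\eta$ be $\SLE_\kappa$ from $0$ to $\infty$ with maps $(g_t)$ and driving function $W_t$, and write $O^1_t=g_t(1)-W_t$, $O^y_t=g_t(y)-W_t$, $u_t=O^1_t/O^y_t\in(0,1)$. By conformal invariance and scaling the probability that $\SLE_\kappa$ from $0$ to $\infty$ avoids an interval $(a,b)\subset(0,\infty)$ depends only on $a/b$; call it $\phi(a/b)$. By the domain Markov property, $M^y_t:=\PP[\eta\text{ avoids }(1,y)\mid\eta[0,t]]=\phi(u_t)$ before $(1,y)$ is swallowed, so $M^y$ is a bounded martingale and $\phi$ must kill the drift of $\phi(u_t)$. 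An It\^o computation using the displayed dynamics of $O^1,O^y$ gives
\[du_t=\frac{\sqrt\kappa\,(u_t-1)}{O^y_t}\,dB_t+\frac{1}{(O^y_t)^2}\left(\frac{2}{u_t}+(\kappa-2)u_t-\kappa\right)dt,\]
whence, after cancelling the common factor $(u-1)$, the ODE
\[\frac{\kappa}{2}\,u(u-1)\,\phi''(u)+\big((\kappa-2)u-2\big)\phi'(u)=0.\]
Solving this with the geometric boundary values $\phi(0^+)=0$ (the tip approaches the left endpoint, forcing the curve into the interval) and $\phi(1^-)=1$ (the interval degenerates to a far, thin point) yields
\[\phi(u)=\frac{\int_0^u s^{-4/\kappa}(1-s)^{-2(\kappa-4)/\kappa}\,ds}{\int_0^1 s^{-4/\kappa}(1-s)^{-2(\kappa-4)/\kappa}\,ds},\qquad \phi(u)\sim C\,u^{(\kappa-4)/\kappa}\ (u\to0^+),\]
for some $C>0$; the integral converges at both endpoints precisely because $\kappa\in(4,8)$ (at $0$ since $4/\kappa<1$, at $1$ since $2(\kappa-4)/\kappa<1$).

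Next I would identify the limiting weight. Up to the swallowing time of $1$, $\PP_y$ is the law of $\SLE_\kappa$ weighted by $M^y_t/M^y_0=\phi(u_t)/\phi(1/y)$, since $u_0=1/y$. Using $\phi(u)\sim C u^{(\kappa-4)/\kappa}$ together with the elementary fact that $O^y_t=g_t(y)-W_t=y(1+o(1))$ as $y\to\infty$ (locally uniformly in $t$, from $g_t(z)=z+2t/z+o(1/z)$ near $\infty$), I get
\[\frac{M^y_t}{M^y_0}=\frac{\phi(u_t)}{\phi(1/y)}\;\longrightarrow\;\big(g_t(1)-W_t\big)^{(\kappa-4)/\kappa}=:N_t\qquad(y\to\infty).\]
By the single-force-point specialization $\nu=0$ of Lemma~\ref{lem::sle_rho_mart} with $\rho=\kappa-4$ — for which the $g_t'(1)$-exponent $\rho(\rho+4-\kappa)/(4\kappa)$ vanishes because $\rho+4-\kappa=0$ — the limit $N_t$ is exactly the martingale whose weighting turns $\SLE_\kappa$ into $\SLE_\kappa(\kappa-4)$ with force point $1$.

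To conclude weak convergence I would localize: for small $\epsilon$ let $\tau_\epsilon$ be the first time $O^1_t\notin(\epsilon,1/\epsilon)$ or $t=1/\epsilon$. On $[0,\tau_\epsilon]$ the weights $M^y_t/M^y_0$ are bounded uniformly in $y$ and converge almost surely to $N_t$, hence converge in $L^1(\SLE_\kappa)$, so $\PP_y$ restricted to $\eta[0,\tau_\epsilon]$ converges to $\SLE_\kappa(\kappa-4)$ restricted to $\eta[0,\tau_\epsilon]$. Since $\kappa-4\ge\kappa/2-2$, the limiting process $\SLE_\kappa(\kappa-4)$ never hits $[1,\infty)$ and is transient, so $\tau_\epsilon\uparrow\infty$ as $\epsilon\to0$; letting $\epsilon\to0$ upgrades the stopped convergence to convergence of the whole curve, via Theorem~\ref{thm::cvg_curves_chordal} (driving-function convergence is equivalent to curve convergence once tightness is secured).

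The main obstacle is the last step. The global Radon--Nikodym derivatives are \emph{not} uniformly integrable — indeed $M^y_0=\phi(1/y)\to0$, so the conditioning event itself degenerates — and boundedness is available only after stopping strictly before the swallowing time of $1$. The real work is therefore twofold: (i) secure tightness of $\{\PP_y\}$ in the curve space uniformly in $y$, e.g.\ by verifying Condition~C2 for the conditioned laws using their absolute continuity with respect to $\SLE_\kappa$ and the controlled density up to $\tau_\epsilon$; and (ii) rule out any escape of mass as $t$ approaches the swallowing time, which is exactly where the inequality $\kappa-4\ge\kappa/2-2$ — forcing the limit to avoid $[1,\infty)$ and hence $\tau_\epsilon\uparrow\infty$ — is indispensable.
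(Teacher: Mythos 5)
Your argument is correct in its core computations and takes a genuinely different route from the paper. The paper never touches the avoidance probability for the finite interval $(1,y)$ at all: it weights $\SLE_{\kappa}$ directly by the single-force-point martingale $M_t=(g_t(1)-W_t)^{(\kappa-4)/\kappa}$ of Lemma \ref{lem::sle_rho_mart} (with $\rho=\kappa-4$, so the $g_t'$-exponent vanishes), observes that the weighted law is $\SLE_{\kappa}(\kappa-4)$, which avoids $(1,\infty)$, so that $M$ is uniformly integrable, and then interprets the weighting as a Doob $h$-transform via the level-crossing events $E_N=\{\tau_N<\tau_0\}$ of $M$, each of probability $1/N$; the identification of ``conditioning on $E_N$ as $N\to\infty$'' with ``conditioning not to hit $(1,y)$ as $y\to\infty$'' is left implicit. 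You instead work with $\PP_y$ literally: you compute the avoidance probability $\phi(u)$ explicitly (your ODE, its solution, and the exponent $u^{(\kappa-4)/\kappa}$ at $u=0^+$ all check out, and the integrability at both endpoints is exactly the condition $\kappa\in(4,8)$), and you show the Radon--Nikodym derivatives $\phi(u_t)/\phi(1/y)$ converge to the same martingale $M_t$. Your route costs an explicit hypergeometric/Beta computation and a justification of the boundary values $\phi(0^+)=0$, $\phi(1^-)=1$ (which requires knowing that $u_t\to 0$ when the curve hits $[1,y)$ and $u_t\to 1$ when it swallows the interval from $[y,\infty)$), but it buys a proof that is faithful to the statement as written and makes the mechanism of the $y\to\infty$ limit transparent, including the quantitative bound $M^y_t/M^y_0\le C\,(O^1_t\,y/O^y_t)^{(\kappa-4)/\kappa}$ needed for uniform boundedness on $[0,\tau_\eps]$.

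The one place where your proposal is incomplete is the final upgrade from convergence of the stopped laws to weak convergence of the full curves, and you are right to flag it: the global densities are not uniformly integrable (indeed $\phi(1/y)\to 0$), so one needs tightness of $\{\PP_y\}$ in the curve space together with $\tau_\eps\uparrow\infty$ uniformly in $y$, the latter resting on $\kappa-4\ge\kappa/2-2$. Be aware that the paper's own proof is no more rigorous at the analogous point --- it asserts the identification of the two singular conditionings without addressing the topology of convergence --- so this is a shared thinness rather than a defect specific to your approach.
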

\begin{proof}
Let $\eta$ be an $\SLE_{\kappa}$ and denote by $\PP$ its law. 
By Lemma \ref{lem::sle_rho_mart}, we know that the process:
\[M_t:=(g_t(1)-W_t)^{(\kappa-4)/\kappa}\]
is a local martingale for $\eta$ and the law of $\eta$ weighted by $M$ becomes $\SLE_{\kappa}(\kappa-4)$ up to the first time that $\eta$ swallows $1$. Since $\SLE_{\kappa}(\kappa-4)$ does not hit the interval $(1,\infty)$, we know that $M_t$ is in fact a uniformly integrable martingale and the law of $\eta$ weighted by $M$ is $\SLE_{\kappa}(\kappa-4)$ for all time. For $N\ge 0$, define $\tau_N=\inf\{t: M_t=N\}$. By Optional Stopping Theorem, we have $\E[M_{\tau_N\wedge\tau_0}]=1$, thus the event $E_N=\{\tau_N<\tau_0\}$ has probability $1/N$. Therefore, weighting the law of $\eta$ by $M$ is equivalent to conditioning $\eta$ on $E_N$ up to $\tau_N\wedge\tau_0$. This is true for all $N$, thus weighting the law of $\eta$ by $M$ is equivalent to conditioning $\eta$ not to hit the interval $(1,\infty)$. 
\end{proof}
\subsection{Gaussian Free Field}
Suppose that $D\subsetneq \C$ is a proper domain with harmonically non-trivial boundary (i.e. a Brownian motion started at a point in $D$ hits $\partial D$ almost surely.) For $f,g\in L^2(D)$, we denote by $(f,g)$ the inner product of $L^2(D)$: $(f,g)=\int_D f(z)g(z)d^2z$,
where $d^2z$ is the Lebesgue area measure. Denote by $H_s(D)$ the space of real-valued smooth functions which are compactly supported in $D$. This space has a \textit{Dirichlet inner product} defined by
\[(f,g)_{\nabla}=\frac{1}{2\pi}\int_D\nabla f(z)\cdot\nabla g(z)d^2z.\]
Denote by $H(D)$ the Hilbert space completion of $H_s(D)$.

The \textit{zero-boundary $\GFF$} on $D$ is a random sum of the form
$h=\sum_{j=1}^{\infty}\alpha_jf_j$, 
where the $\alpha_j$ are i.i.d. one-dimensional standard Gaussians (with mean zero and variance 1) and the $f_j$ are an orthonormal basis for $H(D)$. This sum almost surely diverges within $H(D)$; however, it does converge almost surely in the space of distributions--- that is, the limit $\sum_j \alpha_j(f_j,p)$ almost surely exists for all $p\in H_s(D)$, and the limiting values, denoted by $(h,p)$, as a function of $p$ is almost surely a continuous functional on $H_s(D)$.
For any $f\in H_s(D)$, let $p=-\Delta f\in H_s(D)$, and define
$(h,f)_{\nabla}:=\frac{1}{2\pi}(h,p)$.
Then $(h,f)_{\nabla}$ is a mean-zero Gaussian with variance
\[\frac{1}{4\pi^2}\sum_j(f_j,p)^2=\sum_j(f_j,f)_{\nabla}^2=(f,f)_{\nabla}^2.\]
The zero-boundary $\GFF$ on $D$ is the only random distribution on $D$ with the property that, for each $f\in H_s(D)$, the element $(h,f)_{\nabla}$ is a mean-zero Gaussian with variance $(f,f)_{\nabla}$.
For any harmonic function $h_0$ on $D$, we use the phrase $\GFF$ with boundary data $h_0$ to indicate $h=\tilde{h}+h_0$ where $\tilde{h}$ is a zero-boundary $\GFF$. 

In this section, we will introduce level lines and flow lines of $\GFF$ and list their properties proved in \cite{SchrammSheffieldContinuumGFF, MillerSheffieldIG1, WangWuLevellinesGFFI}. 
Let $(K_t, t\ge 0)$ be an $\SLE_{\kappa}(\underline{\rho}^L;\underline{\rho}^R)$ process with force points $(\underline{x}^L;\underline{x}^R)$ where $W,V^{i,q}$ solves (\ref{eqn::sle_rho_sde}). Let $(g_t, t\ge 0)$ be the corresponding family of conformal maps and set $f_t=g_t-W_t$. 
Let $h_t^0$ be the harmonic function on $\HH$ with boundary values given by
\[\begin{cases}
-\lambda(1+\sum_0^j\rho^{i,L}),\quad &\text{if } x\in [V_t^{j+1,L}, V_t^{j,L}),\\
\lambda(1+\sum_0^j \rho^{i,R}), &\text{if } x\in [V_t^{j,R}, V_t^{j+1, R}),
\end{cases}\]
where $\lambda=\pi/\sqrt{\kappa}$ with the convention that $\rho^{0,L}=\rho^{0,R}=0, x^{0,L}=0_-, x^{l+1, L}=-\infty, x^{0,R}=0_+, x^{r+1, R}=\infty$. Define 
\[h_t(z)=h_t^0(f_t(z))-\chi\arg f_t'(z),\quad\text{where }\chi=2/\sqrt{\kappa}-\sqrt{\kappa}/2.\]
There exists a coupling $(h,K)$ where $\tilde{h}$ is a zero-boundary $\GFF$ on $\HH$ and $h=\tilde{h}+h_0$ such that the following is true. Suppose that $\tau$ is any $K$-stopping time before the continuation threshold. Then the conditional law of $h$ restricted to $\HH\setminus K_{\tau}$ given $K_{\tau}$ is the same as the law of $h_{\tau}+\tilde{h}\circ f_{\tau}$. In this coupling, the process $K$ is almost surely determined by $h$. When $\kappa\in (0,4)$, we refer to the $\SLE_{\kappa}(\underline{\rho}^L;\underline{\rho}^R)$ curve in this coupling as the \textit{flow line} of the field $h$; and for $\theta\in\R$, we use the phrase flow line of angle $\theta$ to indicate the flow line of $h+\theta\chi$. 
When $\kappa=4$, we refer to the $\SLE_{4}(\underline{\rho}^L;\underline{\rho}^R)$ in this coupling as the \textit{level line} of the field $h$; and for $u\in\R$, we use the phrase level line with height $u$ to indicate the level line of $h-u$. In this paper, we focus on $\kappa\in (0,4]$. We usually fix $\kappa\in (0,4)$ and set $\kappa'=16/\kappa$. For $\kappa'>4$, we refer to the $\SLE_{\kappa'}(\underline{\rho}^L;\underline{\rho}^R)$ curve coupled with $-h$ in the coupling as the \textit{counterflow line} of $h$. 

In the rest of this section, we fix the following constants:
\begin{equation}\label{eqn::gff_constants}
\kappa\in (0,4),\quad \kappa'=16/\kappa, \quad \lambda=\pi/\sqrt{\kappa},\quad \chi=2/\sqrt{\kappa}-\sqrt{\kappa}/2.
\end{equation}
The flow lines and counterflow lines of $\GFF$ interact in a nice way.
Suppose that $h$ is a $\GFF$ on $\HH$ with piecewise constant data. For $\theta\in\R$, let $\eta_{\theta}$ be the flow line of $h$ with angle $\theta$. Fix $\theta_1>\theta_2>\theta_3$ and suppose that $\eta_{\theta_1}$ and $\eta_{\theta_3}$ do not hit their continuation threshold. Then 
the flow line $\eta_{\theta_2}$ stays to the left of $\eta_{\theta_1}$ and stays to the right of $\eta_{\theta_3}$. Moreover, given $\eta_{\theta_1}$ and $\eta_{\theta_3}$, the conditional law of $\eta_{\theta_2}$ is $\SLE_{\kappa}(\rho^L;\rho^R)$ where 
\[\rho^L=-2+(\theta_1\chi-\theta_2\chi)/\lambda,\quad \rho^R=-2+(\theta_2\chi-\theta_3\chi)/\lambda.\]

Suppose that $h$ is a $\GFF$ on $\HH$ with piecewise constant boundary data. Let $\eta'$ be the counterflow line of $h$ from $\infty$ to $0$ and assume that the continuation threshold of $\eta'$ is not hit and $\eta'$ is nowhere boundary filling. Let $\eta_{+}$ be the flow line of $h$ with angle $\pi/2$ and $\eta_{-}$ be the flow line of $h$ with angle $-\pi/2$. Then $\eta_{+}$ is the left boundary of $\eta'$ and $\eta_{-}$ is the right boundary of $\eta'$. Combining these facts, we obtain the following decomposition of $\eta'$. 

\begin{lemma}\label{lem::counterflowline_decomposition}
Fix $\kappa\in (2,4)$ and $\kappa'=16/\kappa \in (4,8)$ and $\rho^L>-2, \rho^R>-2$. 
Let $\eta'$ be an $\SLE_{\kappa'}(\rho^L;\rho^R)$ in $\HH$ from $\infty$ to 0, and denote by $\eta_+$ its left boundary and $\eta_-$ its right boundary. Then we have the following.
\begin{itemize}
\item The law of $\eta_+$ is $\SLE_{\kappa}(\kappa-4+\kappa\rho^L/4;\kappa/2-2+\kappa\rho^R/4)$. 
\item Given $\eta_-$, the conditional law of $\eta_+$ is $\SLE_{\kappa}(\kappa-4+\kappa\rho^L/4; -\kappa/2)$.
\item Given $\eta_+$, the conditional law of $\eta'$ is $\SLE_{\kappa'}(\kappa'/2-4; \rho^R)$.
\item Given $\eta_+$ and $\eta_-$, the conditional law of $\eta'$ is $\SLE_{\kappa'}(\kappa'/2-4;\kappa'/2-4)$.
\end{itemize} 
\end{lemma}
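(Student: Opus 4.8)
The plan is to deduce all four statements from the $\GFF$ coupling recalled just above: realize $\eta'$ as the counterflow line from $\infty$ to $0$ of a $\GFF$ $h$ on $\HH$ with piecewise constant boundary data, and use that its left and right boundaries are the flow lines $\eta_+=\eta_{\pi/2}$ and $\eta_-=\eta_{-\pi/2}$ of angles $\pm\pi/2$. The first step is pure bookkeeping: choose the boundary data of $h$, constant on each of $(-\infty,0)$ and $(0,\infty)$ with the jump at the target $0$ of $\eta'$, so that the counterflow line coupled with $-h$ is exactly $\SLE_{\kappa'}(\rho^L;\rho^R)$ with force points at $0^\pm$; this fixes the two boundary constants in terms of $\lambda'=\pi/\sqrt{\kappa'}$, $\rho^L$ and $\rho^R$. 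Throughout I will use the identity $\lambda-\lambda'=(\pi/2)\chi$ relating the two regimes.

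For the first bullet, $\eta_+$ is the flow line of $h$ of angle $\pi/2$, equivalently the flow line of angle $0$ of the field $h+(\pi/2)\chi$; since this field has piecewise constant boundary data with a single jump at $0$, its flow line from $0$ to $\infty$ is $\SLE_\kappa(\rho^L_+;\rho^R_+)$ with $\rho^L_+,\rho^R_+$ read off from the boundary values by comparison with the reference heights $-\lambda$ and $+\lambda$. Substituting the boundary constants from the first step yields $\rho^L_+=\kappa-4+\kappa\rho^L/4$ and $\rho^R_+=\kappa/2-2+\kappa\rho^R/4$. For the second bullet I instead condition on $\eta_-=\eta_{-\pi/2}$ and apply the flow-line interaction rule to $\eta_{\pi/2}$ sitting between the left boundary arc and $\eta_{-\pi/2}$: the force coming from the $\eta_-$ side is $-2+(\pi/2-(-\pi/2))\chi/\lambda=-2+\pi\chi/\lambda$, and the clean computation $\pi\chi/\lambda=\chi\sqrt{\kappa}=(2/\sqrt\kappa-\sqrt\kappa/2)\sqrt\kappa=2-\kappa/2$ gives exactly $-\kappa/2$; the force on the other side is unchanged from the marginal value $\kappa-4+\kappa\rho^L/4$, because conditioning on $\eta_-$, which lies to the right of $\eta_+$, does not alter the left boundary data seen by $\eta_+$.

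The last two bullets are the dual statements for $\eta'$ itself. Conditioning on $\eta_+$, the counterflow line runs in the component to the right of $\eta_+$, where the boundary data of $h$ along $\eta_+$ is the flow-line value; translating this value through $-h$ and the constant $\lambda'$ identifies the force on the $\eta_+$ side as $\kappa'/2-4$, while the untouched side keeps $\rho^R$, which is the third bullet. Conditioning on both $\eta_+$ and $\eta_-$ places $\eta'$ in the tube between its own outer boundaries, and the identical computation on both sides gives $\SLE_{\kappa'}(\kappa'/2-4;\kappa'/2-4)$. I expect the main obstacle to be technical rather than conceptual: carefully tracking the additive constants, the winding terms $\chi\arg f_t'$, and the left/right force-point conventions so that every height matches, and checking that under the hypotheses $\rho^L,\rho^R>-2$ none of the flow or counterflow lines reaches its continuation threshold before reaching $\infty$, so that all the conditionings above are genuinely well-defined and the boundaries $\eta_\pm$ of $\eta'$ exist as continuous curves.
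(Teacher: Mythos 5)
Your proposal is correct and follows exactly the route the paper intends: the paper gives no separate proof of this lemma, presenting it as immediate from the imaginary-geometry facts recalled just before it (the $\GFF$ coupling, the identification of the left/right boundaries of a counterflow line with the flow lines of angles $\pm\pi/2$, and the angle-gap formula for conditional laws, i.e.\ the content of \cite{MillerSheffieldIG1}), which is precisely the bookkeeping you carry out, and your explicit computations ($\lambda-\lambda'=\tfrac{\pi}{2}\chi$ and $-2+\pi\chi/\lambda=-\kappa/2$) are right. The only point worth adding is that the threshold check you flag at the end is genuinely substantive for the first bullet: the weight $\kappa-4+\kappa\rho^L/4$ exceeds $-2$ only when $\rho^L>\kappa'/2-4$, so for $\rho^L\in(-2,\kappa'/2-4]$ the stated marginal law is at or below its continuation threshold (this is harmless for the paper's application, where the relevant weight is $\nu\ge\kappa'/2-2$).
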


\section{Hypergeometric SLE and Proof of Theorem \ref{thm::hyperSLE_reversibility}}
\label{sec::hyperSLE}
Fix $\kappa\in (0,8), \rho\in\R$ and two boundary points $0<x<y$. Recall that $F$ is the hypergeometric function defined in (\ref{eqn::hSLE_hyperF}). 
\textit{Hypergeometric SLE}, denoted by $\hSLE_{\kappa}(\rho)$, with marked points $(x,y)$ is the random Loewner chain driven by $W$ which is the solution to the following system of integrated SDEs:
\begin{align}\label{eqn::hyperSLE_sde}
W_t&=\sqrt{\kappa}B_t+\int_0^t\frac{(\rho+2)ds}{W_s-V^x_s}+\int_0^t\frac{-(\rho+2)ds}{W_s-V_s^y}-\int_0^t\kappa\frac{F'(Z_s)}{F(Z_s)}\left(\frac{1-Z_s}{V_s^y-W_s}\right)ds,\\
V_t^x&=x+\int_0^t\frac{2ds}{V_s^x-W_s},\quad V_t^y=y+\int_0^t\frac{2ds}{V_s^y-W_s}, \quad \text{where }Z_t=\frac{V^x_t-W_t}{V_t^y-W_t},\notag
\end{align}
where $B_t$ is one-dimensional Brownian motion. It is clear that the process is well-defined up to the swallowing time of $x$. Moreover, by Girsanov's Theorem, one can check that the law of $\hSLE_{\kappa}(\rho)$ with marked points $(x,y)$, up to the swallowing time of $x$, can be constructed by weighting the law of $\SLE_{\kappa}$ by the local martingale given in Lemma \ref{lem::hypersle_mart}. 

\begin{proposition}\label{prop::hyperSLE}
Fix $\kappa\in (0,8), \rho>(-4)\vee (\kappa/2-6)$ and $0<x<y$. The $\hSLE_{\kappa}(\rho)$ in $\HH$ from 0 to $\infty$ with marked points $(x,y)$ is well-defined for all time and it is almost surely generated by a continuous transient curve. Moreover, it never hits the interval $[x,y]$ when $\rho\ge \kappa/2-4$. 
\end{proposition}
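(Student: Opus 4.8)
The plan is to deduce all three assertions from ordinary $\SLE_{\kappa}$ and from $\SLE_{\kappa}(\rho+2)$ by Girsanov's theorem, so that no analysis of the driving SDE (\ref{eqn::hyperSLE_sde}) beyond Lemma~\ref{lem::hypersle_mart} is required. First I would record that $F>0$ on $[0,1)$, so that $F'/F$ is smooth and the drift in (\ref{eqn::hyperSLE_sde}) is well-defined: since $F(0)=1$ and $F(1)\in(0,\infty)$ for the stated parameter range, positivity on $[0,1)$ follows from the hypergeometric ODE together with an integral representation. Standard SDE theory then gives a unique solution, hence a well-defined Loewner chain, up to the swallowing time $T_x$ of $x$. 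By the remark preceding the proposition, up to any stopping time strictly before $T_x$ the law of $\hSLE_{\kappa}(\rho)$ is that of $\SLE_{\kappa}$ weighted by the positive local martingale $M_t$ of Lemma~\ref{lem::hypersle_mart}; mutual absolute continuity with $\SLE_{\kappa}$ on such intervals then transfers the Rohde--Schramm continuity, so the chain is generated by a continuous curve up to $T_x$.

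The key step is to sharpen this to a comparison with $\SLE_{\kappa}(\rho+2)$ with single force point $x$. Writing $M_t(x)$ for the martingale of Lemma~\ref{lem::sle_rho_mart} with weight $\rho+2$ at $x$ and no force point at $y$, the exponents match the $(g_t(x)-W_t)$ and $g_t'(x)$ factors in $M_t$ exactly, and the ratio is $R_t:=M_t/M_t(x)=g_t'(y)^{b}(g_t(y)-g_t(x))^{-2b}(g_t(y)-W_t)^{-a}F(Z_t)$, with $a,b$ as in Lemma~\ref{lem::hypersle_mart}. As long as $y$ is not swallowed, the three $y$-factors stay in a compact subset of $(0,\infty)$, while $F(Z_t)\to F(0)=1$ as $t\uparrow T_x$ (and $F$ stays bounded and positive on all of $[0,1]$ because $F(1)\in(0,\infty)$); hence $R_t$ is bounded above and below away from $0$, including in the limit $t\to T_x$. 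Consequently $\hSLE_{\kappa}(\rho)$ and $\SLE_{\kappa}(\rho+2)$ are mutually absolutely continuous up to and including $T_x$ with Radon--Nikodym density bounded away from $0$ and $\infty$.

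Now $\SLE_{\kappa}(\rho+2)$ with $\rho+2>-2$ is generated by a continuous transient curve for all time, since its continuation threshold is never reached, so $\hSLE_{\kappa}(\rho)$ inherits continuity and transience. If $\rho\ge\kappa/2-4$, then $\rho+2\ge\kappa/2-2$ and $\SLE_{\kappa}(\rho+2)$ never hits $[x,\infty)$; the equivalence on every finite time interval forces $T_x=\infty$ and shows $\hSLE_{\kappa}(\rho)$ never hits $[x,y]$, which gives the last assertion and well-definedness for all time directly from (\ref{eqn::hyperSLE_sde}). If instead $(-4)\vee(\kappa/2-6)<\rho<\kappa/2-4$, then $\rho+2\in(-2,\kappa/2-2)$ and $\SLE_{\kappa}(\rho+2)$ swallows $x$ at a finite time, so by equivalence $\hSLE_{\kappa}(\rho)$ reaches $T_x<\infty$ as a continuous curve that has swallowed $x$. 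Past $T_x$ I would continue the process by the same reweighting: interpreting $g_t(x)$ as $V^x_t$, the image of the rightmost swallowed real point, keeps $Z_t\in[0,1)$ and $F(Z_t)>0$, so $R_t$ extends continuously across $T_x$ and one may \emph{define} $\hSLE_{\kappa}(\rho)$ for all time as $\SLE_{\kappa}(\rho+2)$ reweighted by $R_t$ up to the swallowing time $T_y$ of $y$; continuity and transience again descend from $\SLE_{\kappa}(\rho+2)$.

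The \textbf{main obstacle} is exactly this degenerate regime $\rho<\kappa/2-4$: the SDE (\ref{eqn::hyperSLE_sde}) becomes singular at $T_x$, so ``well-defined for all time'' requires re-anchoring the process past $T_x$ and verifying that the concatenation is a genuine continuous, transient curve. The clean resolution is to take the comparison with $\SLE_{\kappa}(\rho+2)$ as the definition on $[0,T_y)$ and to check that $R_t$ is a positive martingale for the $\SLE_{\kappa}(\rho+2)$ measure across $T_x$, after a standard localization; here the finiteness and positivity of $F$ on the whole of $[0,1]$---in particular $F(1)\in(0,\infty)$---is precisely what keeps $R_t$ non-degenerate. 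The only remaining delicate point is the behaviour of the $y$-factors $(g_t(y)-W_t)^{-a}$ and $(g_t(y)-g_t(x))^{-2b}$ as $y$ itself is approached, which must be controlled to extend the construction through $T_y$ and hence to all time.
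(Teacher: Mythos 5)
Your strategy---realizing $\hSLE_{\kappa}(\rho)$ as a Girsanov reweighting of a known $\SLE_{\kappa}(\rho)$-type process and transferring continuity through absolute continuity---is the same philosophy as the paper's, but two of your central claims fail, and the step you defer at the end is precisely the hard part of the proof. The claim that the $y$-factors of $R_t=(g_t(y)-W_t)^{-a}\,g_t'(y)^{b}\,(g_t(y)-g_t(x))^{-2b}F(Z_t)$ ``stay in a compact subset of $(0,\infty)$'' as long as $y$ is not swallowed confuses pathwise positivity at each fixed time with a bound that is uniform in $t$ and in the realization: the curve can come arbitrarily close to $y$ without swallowing it, sending $g_t'(y)$ and $g_t(y)-W_t$ to $0$, and no uniform compact set exists. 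Worse, in the regime $\rho\ge\kappa/2-4$ where $T_x=\infty$, transience of $\SLE_{\kappa}(\rho+2)$ forces $g_t(y)-W_t\to\infty$ while the other factors converge, so $(g_t(y)-W_t)^{-a}$ degenerates (to $0$ when $\rho>-2$); the two laws are then mutually \emph{singular} on the full curve, not equivalent, and neither transience nor the avoidance of $[x,y]$ can be ``inherited'' from $\SLE_{\kappa}(\rho+2)$ as you assert---these are tail properties, and the equivalence only holds up to suitable stopping times. This is exactly why the paper compares instead with $\SLE_{\kappa}(\rho+2,\kappa-6-\rho)$, whose second weight at $y$ is tuned so that the $g_t'(y)$ and $g_t(y)-g_t(x)$ factors cancel and the ratio reduces to $F(Z_t)\bigl((g_t(y)-W_t)/y\bigr)^{4/\kappa-1}$, then localizes with $T_y^n=\inf\{t: g_t(y)-W_t\notin[1/n,n]\}$ and explicitly warns that the absolute continuity is \emph{not} preserved as $n\to\infty$.

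Second, what you call ``the only remaining delicate point''---the behaviour as $y$ is approached and swallowed---is the actual content of the proof. For $\kappa\in(4,8)$ the process swallows $y$ at a finite time $T_y$, and one must show (i) that the curve is continuous up to \emph{and including} $T_y$, and (ii) that the process is well-defined and generated by a continuous transient curve after $T_y$. The paper proves (i) in Lemma \ref{lem::hyperSLE_aroundTy} by a M\"{o}bius change of coordinates sending $(0,y,\infty)$ to $(0,\infty,-1)$ and reparametrizing by capacity seen from $y$, which exhibits the process as locally absolutely continuous with respect to $\SLE_{\kappa}(\kappa-6;\rho+2)$; the same comparison is what yields $T_y=\infty$ and accumulation at $\infty$ (hence transience) when $\kappa\le 4$, a point your argument never actually establishes. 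It proves (ii) by the asymptotics $F'(z)(1-z)\asymp(1-z)^{8/\kappa-1}\to 0$ as $z\to 1$, which show that the SDE (\ref{eqn::hyperSLE_sde}) degenerates to $W_t=\sqrt{\kappa}B_t$ after $T_y$, so the continuation is an ordinary $\SLE_{\kappa}$. Your proposal contains neither step, so well-definedness for all time, continuity at $T_y$, and transience remain unproved; the reweighting by $R_t$ that you propose as a definition is itself singular at $T_y$ (there $Z_t\to 1$ and $g_t(y)-W_t\to 0$) and says nothing about the process beyond that time.
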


Before proving Proposition \ref{prop::hyperSLE}, let us compare $\hSLE_{\kappa}(\rho)$ with $\SLE_{\kappa}(\rho+2,\kappa-6-\rho)$ process. 
By Girsanov's Theorem, one can check that 
the Radon-Nikodym derivative of the law of $\hSLE_{\kappa}(\rho)$ with marked points $(x,y)$ with respect to the law of $\SLE_{\kappa}(\rho+2,\kappa-6-\rho)$ with force points $(x,y)$ is given by 
\[R_t=\frac{F(Z_t)}{F(x/y)}\left(\frac{g_t(y)-W_t}{y}\right)^{4/\kappa-1},\quad \text{where }Z_t=\frac{g_t(x)-W_t}{g_t(y)-W_t}.\]
Note that $0\le Z_t\le 1$ for all $t$ and $F(z)$ is bounded for $z\in [0,1]$. 
Define, for $n\ge 1$, 
\[T_y^n=\inf\{t: g_t(y)-W_t\le 1/n \text{ or }g_t(y)-W_t\ge n\}.\]
Then we see that $R_{T_y^n}$ is bounded. Therefore, the law of $\hSLE_{\kappa}(\rho)$ is absolutely continuous with respect to the law of $\SLE_{\kappa}(\rho+2,\kappa-6-\rho)$ up to $T_y^n$. Since $\SLE_{\kappa}(\rho+2,\kappa-6-\rho)$ is generated by a continuous curve up to $T_y$ and it does not hit the interval $[x,y)$ when $\rho\ge\kappa/2-4$, we know that $\hSLE_{\kappa}(\rho)$ is generated by a continuous curve up to $T_y^n$ and it does not hit the interval $[x,y)$ up to $T_y^n$ when $\rho\ge \kappa/2-4$. Let $n\to\infty$, we see that $\hSLE_{\kappa}(\rho)$ is generated by continuous curve up to $T_y=\lim_n T_y^n$ and it does not hit the interval $[x,y)$ up to $T_y$ when $\rho\ge\kappa/2-4$. 

\begin{remark}\label{rem::hsle_degenerate}
From the above argument, we see that $\hSLE_{\kappa}(\rho)$ with marked points $(x,y)$ converges weakly to $\SLE_{\kappa}(\rho+2)$ with force point $x$ when $y\to\infty$ for $\kappa\in (0,8)$ and $\rho>(-4)\vee(\kappa/2-6)$. 
\end{remark}

Note that the absolute continuity of $\hSLE_{\kappa}(\rho)$ with respect to $\SLE_{\kappa}(\rho+2,\kappa-6-\rho)$ is not preserved as $n\to\infty$, since $R_t$ may be no longer bounded away from 0 or $\infty$ as $t\to T_y$. The following lemma discusses the behavior of $\hSLE_{\kappa}(\rho)$ as $t\to T_y$. 

\begin{lemma}\label{lem::hyperSLE_aroundTy}
When $\kappa\in (0,8)$ and $\rho>(-4)\vee(\kappa/2-6)$, 
the $\hSLE_{\kappa}(\rho)$ is well-defined and is generated by continuous curve up to and including the swallowing time of $y$, denoted by $T_y$. Moreover, the curve does not hit the interval $[x,y]$ if $\rho\ge \kappa/2-4$; and $T_y=\infty$ when $\kappa\le 4$; and the curve accumulates at a point in the interval $(y,\infty)$ as $t\to T_y<\infty$ when $\kappa\in (4,8)$. 
\end{lemma}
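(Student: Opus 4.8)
The plan is to build on the absolute continuity already established in the discussion preceding Remark \ref{rem::hsle_degenerate}: up to each time $T_y^n$ the law of $\hSLE_{\kappa}(\rho)$ is absolutely continuous with respect to $\SLE_{\kappa}(\rho+2,\kappa-6-\rho)$ with bounded Radon--Nikodym derivative, which already yields that $\hSLE_{\kappa}(\rho)$ is generated by a continuous curve up to $T_y=\lim_n T_y^n$ and does not hit $[x,y)$ when $\rho\ge\kappa/2-4$. The genuinely new content is the behaviour \emph{at} $T_y$ and the value of $T_y$, and here the crucial point is that $\SLE_{\kappa}(\rho+2,\kappa-6-\rho)$ is the \emph{wrong} comparison process near $T_y$: as $Z_t\to 1$ the hypergeometric weighting shifts the effective second force-point exponent. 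I claim that near $T_y$ the correct comparison is $\SLE_{\kappa}(\rho+2,-\rho-2)$, whose behaviour undergoes a phase transition exactly at $\kappa=4$ because its exponent sum $(\rho+2)+(-\rho-2)=0$ crosses the threshold $\kappa/2-2$ there.

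To make this precise I would compare the two weightings of $\SLE_{\kappa}$ directly. By Lemma \ref{lem::hypersle_mart} the law of $\hSLE_{\kappa}(\rho)$ is $\SLE_{\kappa}$ weighted by $M_t=Z_t^aJ_t^bF(Z_t)$, while by Lemma \ref{lem::sle_rho_mart} the law of $\SLE_{\kappa}(\rho+2,-\rho-2)$ with force points $(x,y)$ is $\SLE_{\kappa}$ weighted by the corresponding $\widetilde M_t$. A direct bookkeeping of exponents shows that the powers of $g_t'(x)$, of $g_t(x)-W_t$ and of $g_t(y)-W_t$ all cancel, leaving
\[
\frac{M_t}{\widetilde M_t}=F(Z_t)\left(\frac{g_t'(y)}{g_t(y)-g_t(x)}\right)^{p},\qquad p=\frac{(\rho+2)(4-\kappa)}{2\kappa}.
\]
(Note that at $\kappa=4$ one has $b=1-4/\kappa=0$, hence $F\equiv 1$ and $p=0$, recovering the exact identity $\hSLE_4(\rho)=\SLE_4(\rho+2,-\rho-2)$.) As $t\to T_y$ we have $Z_t\to 1$, so $F(Z_t)\to F(1)\in(0,\infty)$; and since $(g_t(y)-g_t(x))/(g_t(x)-W_t)=(1-Z_t)/Z_t\to 0$, the interval $[g_t(x),g_t(y)]$ is short compared with its distance to $W_t$, so a Koebe distortion estimate for $g_t^{-1}$ gives $(g_t(y)-g_t(x))/g_t'(y)\to y-x$. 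Hence $M_t/\widetilde M_t$ converges to a finite positive limit, so $\hSLE_{\kappa}(\rho)$ and $\SLE_{\kappa}(\rho+2,-\rho-2)$ are mutually absolutely continuous near $T_y$ with Radon--Nikodym derivative bounded above and below.

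With this mutual absolute continuity in hand, every claim is transferred from the known behaviour of $\SLE_{\kappa}(\rho+2,-\rho-2)$ (with $\rho_1:=\rho+2>-2$ and $\rho_1+\nu=0$) listed among the properties of $\SLE_{\kappa}(\rho,\nu)$. When $\kappa\le 4$ one has $\rho_1+\nu=0\ge\kappa/2-2$, so this process never hits $[y,\infty)$ and is a continuous transient curve; transferring gives $T_y=\infty$ and that $\hSLE_{\kappa}(\rho)$ is a continuous transient curve. When $\kappa\in(4,8)$ one has $\rho_1+\nu=0\in(\kappa/2-4,\kappa/2-2)$, so the comparison process hits $(y,\infty)$ at finite time; transferring gives $T_y<\infty$ and that $\hSLE_{\kappa}(\rho)$ accumulates at a point of $(y,\infty)$ as $t\to T_y$, which in particular provides continuity at $T_y$. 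Finally, for $\rho\ge\kappa/2-4$ non-hitting of the closed interval $[x,y]$ follows by combining the earlier non-hitting of $[x,y)$ with the fact that the comparison process either never reaches $[y,\infty)$ (for $\kappa\le 4$) or accumulates strictly to the right of $y$ (for $\kappa\in(4,8)$), so the point $y$ is never touched.

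The main obstacle is the step establishing the mutual absolute continuity near $T_y$, and it has two parts. First, one must verify that $Z_t\to 1$ as $t\to T_y$ in both regimes: when $\kappa\in(4,8)$ this is forced by $g_t(y)-W_t\to 0$ together with $Z_t\le 1$, but when $\kappa\le 4$ it requires arguing that along the transient curve $x$ and $y$ become conformally indistinguishable from the receding tip, so the ratio $Z_t$ tends to $1$ even though $g_t(y)-W_t$ need not vanish. Second, the Koebe distortion estimate controlling $g_t'(y)/(g_t(y)-g_t(x))$ must be made uniform up to $T_y$; this is the quantitative heart of the argument and is exactly what degenerates at $\kappa=4$ in $p$, reflecting the phase transition. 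A secondary technical point, relevant when $\rho<\kappa/2-4$, is that the curve may swallow $x$ strictly before $y$ (the boundary $Z\to 0$); since $\rho+2>-2$ keeps the process strictly below the continuation threshold at the force point $x$, the evolution can be continued past this event, after which the analysis at $Z\to 1$ above applies unchanged.
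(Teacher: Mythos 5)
Your algebra is correct: with $\rho_1=\rho+2$ and $\nu=-\rho-2$ the exponents of $g_t'(x)$, of $g_t(x)-W_t$ and of $g_t(y)-W_t$ in $M_t$ and in the $\SLE_{\kappa}(\rho+2,-\rho-2)$ martingale of Lemma \ref{lem::sle_rho_mart} do cancel, leaving $M_t/\widetilde M_t=F(Z_t)\bigl(g_t'(y)/(g_t(y)-g_t(x))\bigr)^{p}$ with $p=(\rho+2)(4-\kappa)/(2\kappa)$, and your choice of comparison process is a genuinely different route from the paper's, which instead changes coordinates by $z\mapsto z/(1-z)$, reparameterizes by capacity seen from $y$, and compares with $\SLE_{\kappa}(\kappa-6;\rho+2)$ with force points $(-1;x/(1-x))$. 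However, the key step of your argument has a genuine gap. The claim that $(g_t(y)-g_t(x))/g_t'(y)\to y-x$ is false: Koebe distortion is not available up to the boundary point $y$, and the limit of $g_t'(y)/(g_t(y)-g_t(x))$ is $\varphi'(y)$ where $\varphi$ is a conformal map of the connected component of $\HH\setminus\eta$ with $[x,y]$ on its boundary normalized by $\varphi(x)=0$, $\varphi(y)=1$ --- a random quantity depending on how closely the curve approaches $[x,y]$ (compare the paper's own discussion of $J_\infty$ as the Poisson kernel of $D(x,y)$ in the proof of Proposition \ref{prop::hypersle_mart}: $J_\infty\neq (y-x)^{-2}$ in general). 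In particular $g_t'(y)/(g_t(y)-g_t(x))$ is bounded neither above nor below by deterministic constants on $[0,T_y)$.

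More seriously, even if $M_t/\widetilde M_t$ converged almost surely to a finite positive limit, that alone does not yield mutual absolute continuity of the two laws on $\mathcal{F}_{T_y}$: one needs the density martingale to be uniformly integrable, i.e.\ bounded along a localizing sequence of stopping times that exhausts $[0,T_y]$ under both measures. Without this, none of the properties of $\SLE_{\kappa}(\rho+2,-\rho-2)$ (continuity up to and including $T_y$, $T_y=\infty$ for $\kappa\le 4$, accumulation in $(y,\infty)$ for $\kappa\in(4,8)$) transfer, since these are exactly ``tail at $T_y$'' events, where absolute continuity on each $\mathcal{F}_{\tau_n}$ need not propagate to $\mathcal{F}_{T_y}$. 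The uniform controls you invoke (``$Z_t\to 1$'', ``Koebe uniform up to $T_y$'') presuppose the behaviour of $\hSLE_{\kappa}(\rho)$ near $T_y$ that the lemma asserts, so the argument is circular as stated. This is precisely the difficulty the paper's coordinate change is designed to remove: after sending $y$ to $\infty$, the Radon--Nikodym derivative becomes $F(Z_s)\bigl((g_s(\tilde x)-g_s(-1))(1-x)\bigr)^{-(\rho+2)/\kappa}$, and since $g_s(\tilde x)-g_s(-1)$ is monotone increasing (hence bounded below by its initial value) and bounded above on $\{K_t\subset B(0,n)\}$, the density is genuinely bounded up to the explicit stopping times $S\wedge S^n$, which exhaust the relevant time interval. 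To salvage your comparison you would need an analogous deterministic bound, which the quantity $g_t'(y)/(g_t(y)-g_t(x))$ does not admit.
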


\begin{proof}
One can check that $\hSLE_{\kappa}(\rho)$ process $(K_t, t\ge 0)$ is scaling invariant: for any $\lambda>0$, the process $(\lambda K_{t/\lambda^2}, t\ge 0)$ has the same law as $\hSLE_{\kappa}(\rho)$ with marked points $(\lambda x, \lambda y)$. Thus, we may assume $y=1$ and $x\in (0,1)$, and denote $T_y$ by $T$. 
In this lemma, we discuss the behavior of $\hSLE_{\kappa}(\rho)$ as $t\to T$ and we will argue that the process does not accumulate at the point $1$. To this end, we perform a standard change of coordinate and parameterize the process according the capacity seen from the point $1$, see \cite[Theorem 3]{SchrammWilsonSLECoordinatechanges} or \cite[Section 4.3.3]{QianConformalRestrictionTrichordal}. 

Set $f(z)=z/(1-z)$. Clearly, $f$ is the conformal M\"{o}bius transform of $\HH$ sending the points $(0,1,\infty)$ to $(0,\infty, -1)$. Consider the image of $(K_t, 0\le t\le T)$ under $f$: $(\tilde{K}_s, 0\le s\le \tilde{S})$ where we parameterize this curve by its capacity $s(t)$ seen from $\infty$. Let $(\tilde{g}_s)$ be the corresponding family of conformal maps and $(\tilde{W}_s)$ be the driving function. Let $f_t$ be the M\"{o}bius transform of $\HH$ such that $\tilde{g}_s\circ f=f_t\circ g_t$ where $s=s(t)$. By expanding $\tilde{g}_s=f_t\circ g_t\circ f^{-1}$ around $\infty$ and comparing the coefficients in both sides, we have
\[f_t(z)=-1-\frac{g_t''(1)}{2g_t'(1)}+\frac{g_t'(1)}{g_t(1)-z}.\]
Thus, with $s=s(t)$, 
\[ \tilde{W}_s=f_t(W_t)=-1-\frac{g_t''(1)}{2g_t'(1)}+\frac{g_t'(1)}{g_t(1)-W_t},\quad d\tilde{W}_s=\frac{(\kappa-6)g_t'(1)dt}{(g_t(1)-W_t)^3}+\frac{g_t'(1)dW_t}{(g_t(1)-W_t)^2}.\]
Define 
\[\tilde{V}^x_s=f_t(V_t^x), \quad \tilde{V}^{\infty}_s=f_t(\infty),\quad \tilde{Z}_s=\frac{\tilde{V}^x_s-\tilde{W}_s}{\tilde{V}^x_s-\tilde{V}^{\infty}_s}=Z_t.\]
Plugging in the time change
\[\dot{s}(t)=f_t'(W_t)^2=\frac{g_t'(1)^2}{(g_t(1)-W_t)^4},\]
we obtain
\[d\tilde{W}_s=\sqrt{\kappa}d\tilde{B}_s+\frac{(\rho+2)ds}{\tilde{W}_s-\tilde{V}^x_s}+\frac{(\kappa-6)ds}{\tilde{W}_s-\tilde{V}^{\infty}_s}-\kappa\frac{F'(\tilde{Z}_s)}{F(\tilde{Z}_s)}\frac{ds}{\tilde{V}^x_s-\tilde{V}^{\infty}_s},\]
where $\tilde{B}_s$ is one-dimensional Brownian motion. By Girsanov's Theorem, the Radon-Nikodym derivative of the law of $\tilde{K}$ with respect to the law of $\SLE_{\kappa}(\kappa-6;\rho+2)$ with force points $(-1; \tilde{x}:=x/(1-x))$ is given by 
\[R_s=\frac{F(Z_s)}{F(x)}\left(\frac{g_s(\tilde{x})-g_s(-1)}{(1-x)^{-1}}\right)^{-(\rho+2)/\kappa},\quad \text{where }Z_s=\frac{g_s(\tilde{x})-W_s}{g_s(\tilde{x})-g_s(-1)}.\]
Note that $0\le Z_s\le 1$ and $F(z)$ is bounded for $z\in [0,1]$; and that the process $g_s(\tilde{x})-g_s(-1)$ is increasing, thus $g_s(\tilde{x})-g_s(-1)\ge 1/(1-x)$. Let $S$ be the swallowing time of $-1$. Define, for $n\ge 1$,
\[S^n=\inf\{t: K_t\text{ exits }B(0,n)\}.\]
Then $R_s$ is bounded up to $S\wedge S^n$, and thus the process $\tilde{K}$ is absolutely continuous with respect to $\SLE_{\kappa}(\kappa-6;\rho+2)$ up to $S\wedge S^n$. We list some properties of $\SLE_{\kappa}(\kappa-6;\rho+2)$ with force points $(-1; \tilde{x}=x/(1-x))$ here: it is generated by continuous curve up to and including the continuation threshold; 
the curve does not hit the interval $[\tilde{x},\infty)$ when $\rho+2\ge\kappa/2-2$. 
When $\kappa\in (0,4]$, the curve almost surely accumulates at the point $-1$, since $\kappa-6\le \kappa/2-4$; when $\kappa\in (4,8)$, the curve hits the interval $(-\infty, -1)$ at finite time almost surely, since $\kappa-6\in (-2,\kappa/2-2)$.
Therefore, the process $\tilde{K}$ is generated by continuous curve up to and including $\tilde{S}$. This implies that our original $\hSLE_{\kappa}(\rho)$ process $(K_t, t\ge 0)$ is generated by continuous curve up to and including $T$; moreover, the curve accumulates at a point in $(y,\infty)\cup\{\infty\}$ as $t\to T$, and it does not hit $[x,y]$ when $\rho\ge\kappa/2-4$. 
\end{proof}

\begin{proof}[Proof of Proposition \ref{prop::hyperSLE}]
In Lemma \ref{lem::hyperSLE_aroundTy}, we have shown that $\hSLE_{\kappa}(\rho)$ is well-defined and is generated by continuous curve up to and including $T_y$. In particular, when $\kappa\le 4$, since $T_y=\infty$, we obtain the conclusion for this case. It remains to prove the conclusion for $\kappa\in (4,8)$. In this case, as $t\to T_y$, we have 
$V^y_t-W_t\to 0$ and $Z_t\to 1$. Note that $F(z)$ remains bounded as $z\to 1$; and that $F'(z)(1-z)\to 0$ as $z\to 1$, since (see \cite[Equations (15.2.1),(15.3.3)]{AbramowitzHandbook}), as $z\to 1$,
\[F'(z)=\left(\frac{\rho+2}{\rho+4}\right)\left(1-\frac{4}{\kappa}\right)(1-z)^{8/\kappa-2}\hF\left(\frac{4}{\kappa}, \frac{12+2\rho}{\kappa}-1, \frac{8+2\rho}{\kappa}+1; z\right)\approx (1-z)^{8/\kappa-2}.\]
Combining these, we know that the SDE (\ref{eqn::hyperSLE_sde}) degenerates to $W_t=\sqrt{\kappa}B_t$ for $t\ge T_y$. Therefore, the process is the same as standard $\SLE_{\kappa}$ for $t\ge T_y$, and hence is generated by continuous transient curve. 
\end{proof}

\begin{proposition}\label{prop::hypersle_mart}
Fix $\kappa\in (0,8), \rho\ge \kappa/2-4$ and $0<x<y$. The local martingale defined in Lemma \ref{lem::hypersle_mart} is a uniformly integrable martingale for $\SLE_{\kappa}$; and the law of $\SLE_{\kappa}$ weighted by this martingale is the same as $\hSLE_{\kappa}(\rho)$ with marked points $(x,y)$.
\end{proposition}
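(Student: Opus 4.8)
The plan is to follow the template already used in the proof of Lemma~\ref{lem::sle_conditioned}: check that $M$ is a positive continuous local martingale, realise the measure obtained by weighting $\SLE_\kappa$ by $M$ as $\hSLE_\kappa(\rho)$ along a localising sequence, and then invoke the non-hitting and transience properties established in Lemma~\ref{lem::hyperSLE_aroundTy} to rule out any escape of mass, thereby upgrading $M$ from a local martingale to a uniformly integrable one. First I would record positivity. For $t<T_x$ we have $0<Z_t<1$, $J_t>0$, and $F(Z_t)\in(0,\infty)$ since $F$ is continuous and strictly positive on $[0,1]$ for $\kappa\in(0,8)$ and $\rho>(-4)\vee(\kappa/2-6)$ (which is implied by $\rho\ge\kappa/2-4$). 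Hence $M$ is a positive continuous local martingale on $[0,T_x)$ by Lemma~\ref{lem::hypersle_mart}; after normalising so that $M_0=1$ it is a non-negative supermartingale, so $M_\infty=\lim_t M_t$ exists almost surely, and the goal reduces to showing $\E[M_\infty]=1$, which is equivalent to $M$ being a uniformly integrable martingale.

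For the change of measure I would localise exactly as in Lemma~\ref{lem::sle_conditioned}. Set $\tau_n=\inf\{t:M_t\ge n\}$; by continuity $M_{t\wedge\tau_n}\le n$, so $M^{\tau_n}$ is a bounded martingale, and by the construction of $\hSLE_\kappa(\rho)$ via weighting recorded before Proposition~\ref{prop::hyperSLE} (an application of Girsanov's theorem to the local martingale of Lemma~\ref{lem::hypersle_mart}), weighting the law of $\SLE_\kappa$ by $M_{\tau_n}$ produces the law of $\hSLE_\kappa(\rho)$ run up to $\tau_n$. Writing $\PP^\ast$ for this weighted (i.e.\ $\hSLE$) measure, the optional stopping computation $1=\E[M_{t\wedge\tau_n}]=\E[M_t\one_{\tau_n>t}]+n\,\PP[\tau_n\le t]$ together with $n\,\PP[\tau_n\le t]=\PP^\ast[\tau_n\le t]=\PP^\ast[\sup_{s\le t}M_s\ge n]$ shows that $\E[M_t]=1$ for each fixed $t$ as soon as $\sup_{s\le t}M_s<\infty$ holds $\PP^\ast$-almost surely, that is, as soon as $M$ does not blow up under $\hSLE_\kappa(\rho)$.

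It remains to verify this non-blow-up, which is where the regimes separate. When $\kappa\in(0,4]$ we have $T_y=\infty$, and by Lemma~\ref{lem::hyperSLE_aroundTy} the curve is transient and never hits $[x,y]$, so the point $x$ is never swallowed and $M_t$ stays finite for all $t$; hence $\tau_n\to\infty$ under $\PP^\ast$ and, letting $t\to\infty$ in the above identity, $\E[M_\infty]=1$ and $M$ is a uniformly integrable martingale whose weighting reproduces $\hSLE_\kappa(\rho)$ for all time. When $\kappa\in(4,8)$ we have $T_y<\infty$; away from $T_y$ the factor $M$ is controlled because the Radon--Nikodym derivative $R_t$ of $\hSLE_\kappa(\rho)$ with respect to $\SLE_\kappa(\rho+2,\kappa-6-\rho)$ is bounded up to each $T_y^n$, as in the discussion following Proposition~\ref{prop::hyperSLE}. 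At $T_y$ itself I would use the asymptotics of $F$ and $F'$ near $z=1$ recorded in the proof of Proposition~\ref{prop::hyperSLE}, namely $F(z)\to F(1)\in(0,\infty)$ and $F'(z)(1-z)\to 0$, to show that $M_t$ converges to a finite positive limit as $t\uparrow T_y$ (so no mass is lost there) and that the driving SDE degenerates to that of standard $\SLE_\kappa$; thus beyond $T_y$ the weighting is trivial, $\hSLE_\kappa(\rho)$ coincides with an ordinary $\SLE_\kappa$, and $M$ is frozen at its value $M_{T_y}$. Combining the two phases gives $\E[M_\infty]=1$ in this regime as well.

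The hard part is the regime $\kappa\in(4,8)$ near $T_y$: as $Z_t\to 1$ and the marked points $x,y$ are about to be swallowed simultaneously, one must show that $Z_t^aJ_t^bF(Z_t)$ neither blows up nor vanishes, and this rests precisely on the hypergeometric asymptotics above rather than on any crude bound. Equivalently — and this is the only place the hypothesis $\rho\ge\kappa/2-4$ enters — the essential point is that the weighted measure assigns probability zero to the event that the curve hits $[x,y]$, which is exactly the event on which the change of measure would degenerate; this is guaranteed by the non-hitting statement of Lemma~\ref{lem::hyperSLE_aroundTy}.
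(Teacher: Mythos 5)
Your proposal is correct and follows essentially the same route as the paper's proof: localise to a bounded martingale, identify the weighted law as $\hSLE_{\kappa}(\rho)$ up to the stopping times via the construction preceding Proposition \ref{prop::hyperSLE}, and use the transience and non-hitting of $[x,y]$ from Proposition \ref{prop::hyperSLE} to rule out escape of mass; the paper merely stops at $T^n=\inf\{t: J_t\le 1/n\ \text{or}\ Z_t\le 1/n\}$ (using that $J_t$ is decreasing and $F$ is bounded on $[0,1]$) instead of at $\tau_n=\inf\{t: M_t\ge n\}$, and your extra discussion of the $\kappa\in(4,8)$ asymptotics near $T_y$ is already subsumed by that proposition. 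The one step to tighten is the conclusion $\E[M_\infty]=1$: obtain it by letting $t\to\infty$ inside the stopped identity (bounded convergence for $M^{\tau_n}$) and then sending $n\to\infty$ using $\PP^{\ast}[\tau_n<\infty]\to 0$, rather than passing $t\to\infty$ in $\E[M_t]=1$, which would presuppose the uniform integrability being proved.
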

\begin{proof}[Proof of Proposition \ref{prop::hypersle_mart} and Theorem \ref{thm::hyperSLE_reversibility}]
In Lemma \ref{lem::hypersle_mart}, we have shown that $M_t$ is a local martingale up to the swallowing time of $x$. Note that $J_t$ is decreasing in $t$, thus $J_t\le J_0$. Therefore $M_t$ is bounded as long as $J_t$ and $Z_t$ are bounded from below. 
Define, for $n\ge 1$,
\[T^n=\inf\{t: J_t\le 1/n\text{ or }Z_t\le 1/n\}.\]
Then $M_{t\wedge T^n}$ is a bounded martingale; moreover, the law of $\SLE_{\kappa}$ weighted by $M_{t}$ is the law of $\hSLE_{\kappa}(\rho)$ up to $T^n$. By Proposition \ref{prop::hyperSLE}, we know that $\hSLE_{\kappa}(\rho)$ is generated by a continuous transient curve and the curve never hits the interval $[x, y]$. Therefore, $M_t$ is actually a uniformly integrable martingale for $\SLE_{\kappa}$. This completes the proof of Proposition \ref{prop::hypersle_mart}. 

We have shown that $\hSLE_{\kappa}(\rho)$ is generated by continuous curve in Proposition \ref{prop::hyperSLE}, to show Theorem 
\ref{thm::hyperSLE_reversibility}, it remains to show the reversibility. To this end, we will derive the explicit formula for $M_{\infty}$. Given a deterministic continuous curve $\eta$ in $\overline{\HH}$ from 0 to $\infty$ with continuous driving function that does not hit the interval $[x,y]$, denote by $D(x,y)$ the connected component of $\HH\setminus \eta$ with $[x,y]$ on the boundary. We know that 
\[\lim_{t\to \infty}Z_t=1,\quad \lim_{t\to\infty}J_t=J_{\infty}:=\frac{g'(x)g'(y)}{(g(y)-g(x))^2},\]
where $g$ is any conformal map from $D(x,y)$ onto $\HH$. One can check that the quantity $J_{\infty}$ only depends on the region $D(x,y)$ and does not depend on the choice of conformal map $g$. In fact, the quantity $J_{\infty}$ is the so-called Poisson kernel of the region $D(x,y)$. Thus we have almost surely $M_{\infty}=\lim_{t\to \infty}M_t=J_{\infty}^b$. Moreover, the Radon-Nikodym derivative of the law of $\hSLE_{\kappa}(\rho)$ with marked points $(x,y)$ with respect to the law of $\SLE_{\kappa}$ is given by $M_{\infty}/M_0$. Combining the reversibility of standard $\SLE_{\kappa}$ and the conformal invariance of the quantity $M_{\infty}/M_0$, we have the reversibility of $\hSLE_{\kappa}(\rho)$.  
\end{proof}

To end this section, we will prove a generalization of Lemma \ref{lem::sle_conditioned}. 

\begin{proposition}
Fix $\kappa\in (0,8), \rho\in ((-4)\vee(\kappa/2-6), \kappa/2-4)$, and $0<x<y$. Let $\eta$ be an $\hSLE_{\kappa}(\rho)$ in $\HH$ from 0 to $\infty$ with marked points $(x,y)$. The law of $\eta$ conditioned to avoid the interval $[x,y]$ is the same as $\hSLE_{\kappa}(\kappa-8-\rho)$.  In particular, the law of $\SLE_{\kappa}$ with $\kappa\in (4,8)$ conditioned to avoid the interval $[x,y]$ is the same as $\hSLE_{\kappa}(\kappa-6)$ with marked points $(x,y)$. 
\end{proposition}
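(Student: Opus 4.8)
The plan is to set $\hat\rho:=\kappa-8-\rho$ and to show that conditioning $\hSLE_{\kappa}(\rho)$ to avoid $[x,y]$ amounts to replacing the weighting martingale of $\hSLE_{\kappa}(\rho)$ by that of $\hSLE_{\kappa}(\hat\rho)$, in the spirit of the proof of Lemma \ref{lem::sle_conditioned}. First note that $\rho<\kappa/2-4$ forces $\hat\rho>\kappa/2-4$, so $\hat\rho>(-4)\vee(\kappa/2-6)$ and, by Propositions \ref{prop::hyperSLE} and \ref{prop::hypersle_mart}, $\hSLE_{\kappa}(\hat\rho)$ is well defined, is generated by a continuous transient curve that never hits $[x,y]$, and is $\SLE_{\kappa}$ weighted by the uniformly integrable martingale $\hat M_t:=Z_t^{\hat a}J_t^{\hat b}\hat F(Z_t)$, where $\hat a,\hat b,\hat F$ are the quantities of Lemma \ref{lem::hypersle_mart} and \eqref{eqn::hSLE_hyperF} with $\rho$ replaced by $\hat\rho$. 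The key algebraic observation is that $\hat b=b$: indeed $\hat\rho+2=\kappa-6-\rho$ and $\hat\rho+6-\kappa=-(\rho+2)$, so $\hat b=(\hat\rho+2)(\hat\rho+6-\kappa)/(4\kappa)=b$. Consequently the $J_t$-factors cancel in the ratio
\[N_t:=\frac{\hat M_t}{M_t}=Z_t^{\hat a-a}\,\frac{\hat F(Z_t)}{F(Z_t)}=:N(Z_t),\qquad \hat a-a=\frac{\kappa-8-2\rho}{\kappa},\]
which therefore depends only on $Z_t$.

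Since $\hat M_t$ is a local martingale for $\SLE_{\kappa}$ and $\hSLE_{\kappa}(\rho)$ is $\SLE_{\kappa}$ weighted by $M_t/M_0$, the process $N_t$ is a local martingale under the law $\QQ$ of $\hSLE_{\kappa}(\rho)$. I would then record that $N$ is bounded on $[0,1]$ with $N(0)=0$ and $N(1)=\hat F(1)/F(1)\in(0,\infty)$: these follow from $F(0)=\hat F(0)=1$, from $F(1),\hat F(1)\in(0,\infty)$, from the positivity and continuity of $F,\hat F$ on $(0,1)$ in this parameter range, and crucially from the exponent $\hat a-a=(\kappa-8-2\rho)/\kappa$ being \emph{strictly positive}, which holds precisely because $\rho<\kappa/2-4$. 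Hence $N_t$ is a bounded, and therefore uniformly integrable, martingale under $\QQ$.

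The heart of the proof is to identify $N_t$ with the conditional probability of avoiding $[x,y]$. Here I would establish the geometric dichotomy: under $\QQ$ the curve either hits $[x,y]$, in which case $x$ is swallowed while $y$ is not, so that $Z_t\to 0$, or it avoids $[x,y]$ and is swallowed together with $y$ from the right, so that $Z_t\to 1$; and exactly one of the two occurs almost surely at the terminal time (this uses that $\hSLE_{\kappa}(\rho)$ is generated by a continuous curve up to and including $T_y$, with $T_y=\infty$ when $\kappa\le 4$, by Lemma \ref{lem::hyperSLE_aroundTy}). Combined with martingale convergence, $N_\infty=(\hat F(1)/F(1))\,\one_{\{\mathrm{avoid}\}}$, and optional stopping together with the Markov property gives $\QQ[\,\mathrm{avoid}\mid\LF_t\,]=N_t\,F(1)/\hat F(1)$; in particular $\QQ[\mathrm{avoid}]=N(x/y)\,F(1)/\hat F(1)>0$, so the conditioning is well posed. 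I expect this dichotomy and the almost-sure terminal exit of $Z_t$ at $\{0,1\}$ to be the main obstacle; the identity $\hat b=b$, the boundary-value computation, and the reweighting below are routine once it is in place.

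Finally, for any $A\in\LF_t$,
\[\QQ[A\mid \mathrm{avoid}]=\frac{\E_{\QQ}\bigl[\one_A\,\QQ[\mathrm{avoid}\mid\LF_t]\bigr]}{\QQ[\mathrm{avoid}]}=\frac{\E_{\QQ}[\one_A N_t]}{N_0},\]
so the law of $\hSLE_{\kappa}(\rho)$ conditioned to avoid $[x,y]$, restricted to $\LF_t$, is $\QQ$ reweighted by $N_t/N_0$. As $\QQ$ is $\SLE_{\kappa}$ reweighted by $M_t/M_0$, and $N_tM_t=\hat M_t$ with $N_0M_0=\hat M_0$, the conditioned law restricted to $\LF_t$ equals $\SLE_{\kappa}$ reweighted by $\hat M_t/\hat M_0$, i.e.\ $\hSLE_{\kappa}(\hat\rho)$ restricted to $\LF_t$ (using that $\hat M$ is a genuine martingale by Proposition \ref{prop::hypersle_mart}). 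This holds for every $t$ up to the swallowing time of $y$, and on the avoidance event both processes continue as ordinary $\SLE_{\kappa}$ thereafter, so the two laws coincide; this is the assertion with $\hat\rho=\kappa-8-\rho$. The special case $\rho=-2$, where $\hSLE_{\kappa}(-2)=\SLE_{\kappa}$, then yields that $\SLE_{\kappa}$ with $\kappa\in(4,8)$ conditioned to avoid $[x,y]$ is $\hSLE_{\kappa}(\kappa-6)$.
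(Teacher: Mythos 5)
Your proof is correct and follows essentially the same route as the paper: both identify the Radon--Nikodym ratio between $\hSLE_{\kappa}(\kappa-8-\rho)$ and $\hSLE_{\kappa}(\rho)$ as a function of $Z_t$ alone (the cancellation $\hat b=b$ is what the paper uses implicitly when writing this ratio), note that $\kappa-8-\rho\ge\kappa/2-4$ forces the reweighted process to avoid $[x,y]$ so that $Z_t\to 1$ and the martingale is uniformly integrable, and conclude that the reweighting equals conditioning on avoidance. Your write-up is merely more explicit about the $Z_t\to\{0,1\}$ dichotomy and the optional-stopping step, and your exponent $(\kappa-8-2\rho)/\kappa$ is the one consistent with Lemma \ref{lem::hypersle_mart}.
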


\begin{proof}
Let $\hat{\eta}$ be an $\hSLE_{\kappa}(\kappa-8-\rho)$ in $\HH$ from 0 to $\infty$ with marked points $(x,y)$. By SDE (\ref{eqn::hyperSLE_sde}), we see that the law of $\hat{\eta}$ is the same as the law of $\eta$ weighted by the following martingale up to the swallowing time of $x$, denoted by $T_x$:
\[M_t=Z_t^{(\kappa-8-2\rho)/(4\kappa)}\hat{F}(Z_t)/F(Z_t)\one_{\{t<T_x\}}, \quad \text{where }Z_t=\frac{g_t(x)-W_t}{g_t(y)-W_t},\]
and 
\[F(z)=\hF\left(\frac{2\rho+4}{\kappa}, 1-\frac{4}{\kappa}, \frac{2\rho+8}{\kappa};z\right),\quad \hat{F}(z)=\hF\left(\frac{2(\kappa-6-\rho)}{\kappa}, 1-\frac{4}{\kappa}, \frac{2(\kappa-4-\rho)}{\kappa}; z\right).\]
Note that $\hat{F}$ and $F$ are positive and bounded for $z\in [0,1]$.

Since $\kappa-8-\rho\ge\kappa/2-4$, we know that the law of $\eta$ weighted by $M$ does not hit the interval $[x,y]$ up to $T_x$. In other words, under the weighted law, we have $T_x$ coincides with the swallowing time of $y$, denoted by $T_y$. Hence, the law of $\hat{\eta}$ is the same as the law of $\eta$ weighted by $M$ up to $T_y$. As $t\to T_y$, we have $Z_t\to 1$. Therefore, $M$ is a uniformly integrable martingale for $\eta$, and the law of $\hat{\eta}$ is the same as the law of $\eta$ weighted by $\one_{\{\eta\cap [x,y]=\emptyset\}}$.
This completes the proof. 
\end{proof}

\section{Proof of Theorem \ref{thm::slepair_lessthan4}}
\label{sec::slepair}
\subsection{Proof of Theorem \ref{thm::slepair_lessthan4}---Uniqueness}
\begin{proposition}\label{prop::slepair_uniqueness_general}
Fix a quad $(\Omega; x^L, x^R, y^R, y^L)$ and $\kappa\in (0,8), \rho^L>-2,\rho^R>-2,\nu\ge\kappa/2-2$. 
There exists at most one probability measure on pairs of curves $(\eta^L;\eta^R)$ in $X_0(\Omega; x^L, x^R, y^R, y^L)$ with the following property: the conditional law of $\eta^R$ given $\eta^L$ is $\SLE_{\kappa}(\nu;\rho^R)$ in the connected component of $\Omega\setminus\eta^L$ with $(x^Ry^R)$ on the boundary, and the conditional law of $\eta^L$ given $\eta^R$ is $\SLE_{\kappa}(\rho^L;\nu)$ in the connected component of $\Omega\setminus \eta^R$ with $(y^Lx^L)$ on the boundary. 
\end{proposition}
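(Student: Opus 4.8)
The plan is to prove uniqueness by a resampling argument combined with a monotone coupling. Suppose $\PP_1$ and $\PP_2$ are two probability measures on $X_0(\Omega; x^L, x^R, y^R, y^L)$ both enjoying the stated conditional-law property; the goal is to show $\PP_1=\PP_2$. First I would introduce the \emph{resampling Markov kernel} $\mathcal{K}$ acting on pairs $(\eta^L;\eta^R)$: given the current pair, resample $\eta^R$ from its conditional law given $\eta^L$ (an $\SLE_\kappa(\nu;\rho^R)$ in the component of $\Omega\setminus\eta^L$ with $(x^Ry^R)$ on the boundary), and then resample $\eta^L$ from its conditional law given the new $\eta^R$. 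Each of these two steps leaves invariant any measure having the prescribed conditionals, so both $\PP_1$ and $\PP_2$ are stationary for $\mathcal{K}$. It therefore suffices to show that $\mathcal{K}$ admits at most one stationary measure.

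The engine is a monotone coupling. On pairs I use the partial order $(\eta^L;\eta^R)\preceq(\hat\eta^L;\hat\eta^R)$, meaning $\eta^L$ lies to the left of $\hat\eta^L$ and $\eta^R$ lies to the left of $\hat\eta^R$. The key structural fact is that $\mathcal{K}$ is monotone for $\preceq$. Indeed, if $\eta^L$ lies to the left of $\hat\eta^L$, then the domain $\Omega^R$ to the right of $\eta^L$ contains the domain $\hat\Omega^R$ to the right of $\hat\eta^L$, so by the monotonicity of $\SLE_\kappa(\nu;\rho^R)$ under this enlargement the resampled curves can be coupled so that $\eta^R$ stays to the left of $\hat\eta^R$ (the curve with more room on the left lies further left); the symmetric statement holds for the $\eta^L$-step. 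Here the hypotheses $\rho^L,\rho^R>-2$ guarantee that the conditional curves are well-defined up to their endpoints, and $\nu\ge\kappa/2-2$ guarantees that $\eta^R$ (resp.\ $\eta^L$) does not touch the boundary arc carrying $\eta^L$ (resp.\ $\eta^R$), so the two curves remain genuinely separated and the domain-monotonicity estimates for $\SLE_\kappa(\rho)$ apply uniformly throughout the dynamics.

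With monotonicity in hand I would run the coupled chain from the two extremal (degenerate) boundary configurations, the ``leftmost'' and the ``rightmost'' admissible pairs; after a single application of $\mathcal{K}$ the resampled curves are genuine $\SLE_\kappa(\rho)$ curves and hence lie in the interior of $X_0(\Omega; x^L, x^R, y^R, y^L)$. By monotonicity these two chains sandwich, at every step, the image under $\mathcal{K}$ of \emph{any} stationary configuration. Consequently, if the two sandwiching chains merge---that is, the $\preceq$-gap between the top and bottom chains tends to $0$ in the metric $\LD$---then every stationary measure is squeezed onto the same limiting law, and uniqueness follows. I would establish the merging quantitatively: at each resampling step there is a probability bounded below, uniformly over the sandwiching pair, that the top and bottom resampled curves agree on a macroscopic initial segment, obtained from boundary estimates for $\SLE_\kappa(\rho)$ grown in two nested domains that coincide near the starting point. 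Iterating this minorization forces the coupling distance to contract to $0$.

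The main obstacle is precisely this final merging step: proving that the monotone coupling of the two extremal chains actually collapses. Monotonicity alone only preserves the order; one still has to produce a uniform contraction, and this requires genuine control on how much an $\SLE_\kappa(\rho)$ curve can differ when grown in two slightly different domains with a force point kept on the common part of the boundary. The condition $\nu\ge\kappa/2-2$ is what keeps these estimates uniform, by holding both curves away from the opposite boundary arc during the whole evolution; the delicate point is handling the four marked endpoints $x^L,x^R,y^L,y^R$, where the curves are pinned and the domains degenerate, so that the contraction estimate does not deteriorate there.
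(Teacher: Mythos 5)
Your opening move --- introducing the resampling (Gibbs sampler) kernel $\mathcal{K}$ and reducing uniqueness of the Gibbs-type measure to uniqueness of the stationary measure of $\mathcal{K}$ --- is exactly the paper's starting point. But the engine you then rely on does not exist: there is no known monotone coupling of chordal $\SLE_{\kappa}(\rho)$ in nested domains. Your ``key structural fact'' asserts that if $\eta^L$ lies to the left of $\hat\eta^L$, so that the right component $\Omega^R$ of $\Omega\setminus\eta^L$ contains the right component $\hat\Omega^R$ of $\Omega\setminus\hat\eta^L$, then the two conditional $\SLE_{\kappa}(\nu;\rho^R)$ curves can be coupled with one staying to the left of the other (``the curve with more room on the left lies further left''). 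Unlike lattice models, SLE has no FKG-type stochastic domination under domain enlargement: the two laws are mutually absolutely continuous via a restriction-type martingale (cf.\ Lemma \ref{lem::sle_domain_mart}), but absolute continuity gives no ordering. The monotonicity results that do exist for $\SLE_{\kappa}(\rho)$ (monotonicity of GFF flow lines in their angle/boundary data) hold for flow lines of a \emph{single} field in a \emph{fixed} domain; here the two curves live in different domains whose extra boundary arcs ($\eta^L$ versus $\hat\eta^L$) carry prescribed, not inherited, boundary data, so no common field realizes both. Since the entire sandwiching-and-merging scheme rests on this monotonicity, the argument collapses at its central step. Two further points are left untreated: for $\kappa\in(4,8)$ the curves are non-simple, so even the partial order $\preceq$ is problematic (the paper reduces this case to $\kappa\le 4$ by passing to the left/right boundaries of the counterflow lines); and the space $X_0(\Omega;x^L,x^R,y^R,y^L)$ is not complete, so compactness/continuity of the dynamics must be handled on the truncated spaces $X_0^{\eps}$.

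For comparison, the paper avoids any monotonicity. It conditions a stationary measure onto $X_0^{\eps}$, shows the set $\Sigma^{\eps}$ of stationary measures of the $\eps$-truncated chain is convex and compact (continuity of the transition kernel is proved via GFF flow-line couplings and total-variation convergence, and this is precisely where $\nu\ge\kappa/2-2$ enters, keeping the resampled curve away from the other one), and then applies Choquet's theorem: it suffices to show there is a unique extremal. Two distinct extremals must be either mutually absolutely continuous or mutually singular; absolute continuity is excluded as in \cite{MillerSheffieldIG2}, and singularity is excluded by running the two chains in a coupling where, by Lemma \ref{lem::sle_close_positive}, the resampled curves in two domains agreeing near the relevant boundary arc coincide \emph{exactly} with positive probability. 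That last lemma is close in spirit to the minorization you sketch for your merging step, but in the paper it is used to contradict singularity of extremals, not to contract a monotone coupling, so it never needs the uniform-in-configuration contraction estimates you identify (correctly) as the hard part of your route.
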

This proposition was proved for $\kappa\in (0,4], \nu=0$ in \cite[Theorem 4.1]{MillerSheffieldIG2} and the same proof works as long as the two curves do not hit each other. To be self-contained, we will give a brief proof here and point out why this proof only works when the two curves do not hit.

Before proving this proposition, let us first explain that the conclusion for $\kappa\in (4,8)$ follows easily from the conclusion for $\kappa\in (0,4]$. Assume the conclusion in Proposition \ref{prop::slepair_uniqueness_general} is true for $\kappa\in (0,4]$. Fix $\kappa'\in (4,8)$ and set $\kappa=16/\kappa'\in (2,4)$. Suppose that $(\eta^L;\eta^R)$ is a pair in $X_0(\Omega; x^L, x^R, y^L, y^R)$ such that the conditional law of $\eta^R$ given $\eta^L$ is $\SLE_{\kappa'}(\nu;\rho^R)$ and the conditional law of $\eta^L$ given $\eta^R$ is $\SLE_{\kappa'}(\rho^L;\nu)$ where $\rho^L, \rho^R>-2$ and $\nu\ge\kappa'/2-2$. Let $\eta^L_-$ be the right boundary of $\eta^L$ and let $\eta^R_+$ be the left boundary of $\eta^R$. Since the conditional law of $\eta^R$ given $\eta^L_-$ is $\SLE_{\kappa'}(\nu;\rho^R)$, we know that the conditional law of  $\eta^R_+$ given $\eta^L_-$ is (see Lemma \ref{lem::counterflowline_decomposition})
\[\SLE_{\kappa}(\kappa-4+\kappa\nu/4; \kappa/2-2+\kappa\rho^R/4).\]
Similarly, the conditional law of $\eta^L_-$ given $\eta^R_+$ is 
\[\SLE_{\kappa}(\kappa/2-2+\kappa\rho^L/4; \kappa-4+\kappa\nu/4).\]
Note that 
\[\kappa/2-2+\kappa\rho^L/4>-2,\quad \kappa/2-2+\kappa\rho^R/4>-2,\quad \kappa-4+\kappa\nu/4\ge \kappa/2-2.\]
By the conclusion in Proposition \ref{prop::slepair_uniqueness_general} for $\kappa\le 4$, we know that there is at most one probability measure on the pair $(\eta^L_-; \eta^R_+)$. Given the pair $(\eta^L_-; \eta^R_+)$, there is only one way to reconstruct the pair $(\eta^L;\eta^R)$, since the conditional law of $\eta^L$ given $(\eta^L_-;\eta^R_+)$ is $\SLE_{\kappa'}(\rho^L; \kappa'/2-4)$; and the conditional law of $\eta^R$ given $(\eta^L_-; \eta^R_+)$ is $\SLE_{\kappa'}(\kappa'/2-4; \rho^R)$. This implies that the conclusion in Proposition \ref{prop::slepair_uniqueness_general} holds for $\kappa'\in (4,8)$. 
\begin{proof}[Proof of Proposition \ref{prop::slepair_uniqueness_general}]
From the above argument, we only need to prove the conclusion for $\kappa\in (0,4]$. 
Define a Markov chain which transitions from a configuration $(\eta^L;\eta^R)$ to another $(\tilde{\eta}^L;\tilde{\eta}^R)$ in the following way: 
Given a configuration $(\eta^L;\eta^R)$ of non-intersecting curves in $X_0(\Omega; x^L, x^R, y^R, y^L)$, we pick $q\in\{L, R\}$ uniformly and resample $\eta^q$ according to the conditional law of $\eta^q$ given the other one. We will argue that this chain has at most one stationary measure. 
Suppose that $\mu$ is any stationary measure for this chain. Fix $\eps>0$ small, and let $\mu_{\eps}$ be the measure $\mu$ conditioned on $X_0^{\eps}(\Omega; x^L, x^R, y^R, y^L)$. Then $\mu_{\eps}$ is stationary for the $\eps$-Markov chain: the chain is defined the same as before except in each step we resample the path conditioned on $X_0^{\eps}(\Omega; x^L, x^R, y^R, y^L)$. Let $\Sigma^{\eps}$ be the set of all such stationary measures. Clearly, $\Sigma^{\eps}$ is convex. 
\smallbreak
\textit{First}, we argue that $\Sigma^{\eps}$ is compact. Suppose that $\nu_n$ is a sequence in $\Sigma^{\eps}$ converging weakly to $\nu$, we need to show that $\nu$ is also stationary for the $\eps$-Markov chain. Suppose that $(\eta^L_n;\eta^R_n)$ has law $\nu_n$ and $(\eta^L;\eta^R)$ has law $\nu$. By Skorohod Representation Theorem, we could couple all $(\eta^L_n;\eta^R_n)$ and $(\eta^L;\eta^R)$ in a common space so that $\eta^L_n\to \eta^L$ and $\eta^R_n\to\eta^R$ almost surely. Let $D_n^R$ be the connected component of $\Omega\setminus\eta^L_n$ with $(x^Ry^R)$ on the boundary; and $D_n^L$ be the connected component of $\Omega\setminus\eta^R_n$ with $(y^Lx^L)$ on the boundary. Define $D^L, D^R$ for $(\eta^L;\eta^R)$ similarly. For $\delta>0$ small, let $U^R_{\delta}$ be the open set of points in $D^R$ that has distance at least $\delta$ to $\eta^L$ and define $U^L_{\delta}$ in a similar way. For $q\in\{L, R\}$, let $\LF^q=\sigma(\eta^q_n,\eta^q, n\ge 1)$. By the convergence of $\eta^q_n\to\eta^q$, we know that $U^q_n\subset D^q$ for $q\in\{L, R\}$ for $n$ large enough. 

For each $n$, let $h^L_n$ be the $\GFF$ in $D^L_n$ with the boundary value so that its flow line from $x^L$ to $y^L$ is $\SLE_{\kappa}(\rho^L;\nu)$. Define $h^R_n, h^L, h^R$ analogously. We assume that $h^L$ and $h^L_n$ for all $n$ are coupled together so that, given $\LF^R$, they are conditionally independent. The same is true for $h^R_n, h^R$.
Given $\LF^R$, the total variation distance between the law of $h^L_n$ restricted to $U^L_{\delta}$ and the law of $h^L$ restricted to $U^L_{\delta}$ tends to 0; and similar conclusion also holds for $\LF^L, h^R_n, h^R$, see \cite[Equation (4.1)]{MillerSheffieldIG2}:
\[\lim_{n\to\infty}||\LL\left[h^L_n|_{U^L_{\delta}}\cond \LF^R\right]-\LL\left[h^L|_{U^L_{\delta}}\cond \LF^R\right]||_{TV}=0,\quad \lim_{n\to\infty}||\LL\left[h^R_n|_{U^R_{\delta}}\cond \LF^L\right]-\LL\left[h^R|_{U^R_{\delta}}\cond \LF^L\right]||_{TV}=0.\]
We will deduce that, given $\LF^L$, the flow line from $x^R$ to $y^R$ generated by $h_n^R$ converges to the one generated by $h^R$. Fix $\eps'>0$, since $\nu\ge\kappa/2-2$, there exists $\delta>0$ such that, given $\LF^L$, the flow line $\eta^R$ generated by $h^R$ is contained in $U^R_{\delta}$ with probability at least $1-\eps'$ (This is the part of proof that requires the two curves to be non-intersecting).
By the total variation convergence, we could choose $n_0$ such that, for $n\ge n_0$,  
\[||\LL\left[h^R_n|_{U^R_{\delta}}\cond \LF^L\right]-\LL\left[h^R|_{U^R_{\delta}}\cond \LF^L\right]||_{TV}\le \eps'.\]
Since the flow lines are deterministic function of the $\GFF$, the total variation distance between the two flow lines given $\LF^L$ is at most $2\eps'$. This implies that, given $\LF^L$, the total variation distance between the flow line generated by $h_n^R$ and the one generated by $h^R$ goes to zero. Similar result also holds for $\LF^R, h^L_n, h^L$. Since total variation convergence implies weak convergence, we have that the transition kernel for the $\eps$-Markov chain is continuous. Therefore, the measure $\nu$ is stationary. This completes the proof that $\Sigma^{\eps}$ is compact.   
\smallbreak
\textit{Second}, we show that $\Sigma^{\eps}$ is characterized by its extremals. Since $\Sigma^{\eps}$ is compact and the space of probability measures on $X^{\eps}_0(\Omega; x^L, y^L, y^R, x^R)$ is complete and separable, Choquet's Theorem \cite[Section 3]{Choquet} implies that $\mu_{\eps}$ can be uniquely expressed as a superposition of extremals in $\Sigma^{\eps}$. To show that $\Sigma^{\eps}$ consists of at most one element, it suffices to show that there is only one such extremal in $\Sigma^{\eps}$. Suppose that $\nu,\tilde{\nu}$ are two distinct extremal elements in $\Sigma^{\eps}$. Lebesgue's Decomposition Theorem tells that there is a unique decomposition $\nu=\nu_0+\nu_1$ where $\nu_0$ is absolutely continuous with respect to $\tilde{\nu}$ and $\nu_1$ is singular to $\tilde{\nu}$. If both $\nu_0,\nu_1$ are non-zero, then they can be normalized to probability measures in $\Sigma^{\eps}$, this contradicts that $\nu$ is extremal. Therefore, either $\nu$ is absolutely continuous with respect to $\tilde{\nu}$ or $\nu$ is singular to $\tilde{\nu}$. 
We could argue that $\nu$ can not be absolutely continuous with respect to $\tilde{\nu}$. This is proved in \cite[Proof of Theorem 4.1]{MillerSheffieldIG2}. 
\smallbreak
\textit{Finally,} we only need to show that $\nu$ and $\tilde{\nu}$ can not be singular. Suppose that we have two initial configurations $(\eta_0^L;\eta_0^R)\sim \nu$ and $(\tilde{\eta}_0^L;\tilde{\eta}_0^R)\sim \tilde{\nu}$ sampled independently. First, we set $\eta^L_1=\eta^L_0$ and $\tilde{\eta}^L_1=\tilde{\eta}^L_0$, and then, given $\eta^L_1$ and $\tilde{\eta}^L_1$, we sample $\eta^R_1$ and $\tilde{\eta}^R_1$ according to the conditional law and couple them to maximize the probability for them to be equal. The fact that this probability is positive is guaranteed by Lemma \ref{lem::sle_close_positive}. Next, we set $\eta^R_2=\eta^R_1$ and $\tilde{\eta}^R_2=\tilde{\eta}^R_1$, and then, given $\eta^R_2, \tilde{\eta}^R_2$, we sample $\eta^L_2, \tilde{\eta}^L_2$ according to the conditional law and couple them to maximize the probability for them to be equal. Lemma \ref{lem::sle_close_positive} guarantees that the probability for $(\eta^L_2;\eta^R_2)=(\tilde{\eta}^L_2;\tilde{\eta}^R_2)$ is positive, which implies that $\nu$ and $\tilde{\nu}$ can not be singular. This completes the proof that $\Sigma^{\eps}$ contains at most one element. Since this is true for any $\eps>0$, we know that there is at most one stationary measure for the original Markov chain.  
\end{proof}
\begin{lemma}\label{lem::sle_close_positive}
Let $(D; x,y)$ and $(\tilde{D}; x,y)$ be two Dobrushin domains such that $\partial \tilde{D}$ agrees with $\partial D$ in neighborhoods of the arc $(xy)$. Fix $\kappa>0, \rho^L>(\kappa/2-4)\vee -2$ and $\rho^R>-2$. Let $\eta$ (resp. $\tilde{\eta}$) be an $\SLE_{\kappa}(\rho^L;\rho^R)$ in $D$ (resp. $\tilde{D}$) from $x$ to $y$ with force points $(x_-;x_+)$. Then there exists a coupling $(\eta, \tilde{\eta})$ such that the probability for them to be equal is positive. 
\end{lemma}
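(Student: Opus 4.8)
The plan is to reduce the statement to the assertion that the two laws $\mu$ and $\tilde{\mu}$ of $\eta$ and $\tilde{\eta}$, regarded as probability measures on the Polish space $X$ of curves, are \emph{not} mutually singular. Indeed, once this is known, the maximal coupling of $\mu$ and $\tilde{\mu}$ realizes $\PP[\eta=\tilde{\eta}]=1-||\mu-\tilde{\mu}||_{TV}=|\mu\wedge\tilde{\mu}|>0$; concretely, writing the Lebesgue decomposition $\mu=(\mu\wedge\tilde{\mu})+\mu_\perp$, the common component $\mu\wedge\tilde{\mu}$ has positive mass and can be put on a single event where the two curves coincide. So it suffices to exhibit an event of curves to which $\mu$ and $\tilde{\mu}$ each assign positive mass and on which they are mutually absolutely continuous. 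By hypothesis there is an open neighborhood $V$ of the closed arc $\overline{(xy)}$ (including $x$ and $y$) with $V\cap D=V\cap\tilde{D}$, and I take the good event $A$ to be the event that the curve travels from $x$ to $y$ while staying inside $V$. On $A$ both curves lie in the common region $V\cap D$ and never approach the parts of $\partial D$, $\partial\tilde{D}$ where the domains differ.

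The first step is $\mu[A]>0$ and $\tilde{\mu}[A]>0$. I would obtain this by absolute continuity with respect to ordinary $\SLE_\kappa$: by Lemma \ref{lem::sle_rho_mart} the law of $\SLE_\kappa(\rho^L;\rho^R)$ is the law of $\SLE_\kappa$ weighted by a strictly positive local martingale $M$ that, up to any time at which the curve stays a fixed distance from the force points and from $y$, is bounded above and below. Combined with the support theorem for $\SLE_\kappa$ (the driving function can be forced to track, over a compact time interval, a continuous function steering the curve into a tube around the boundary arc joining $x$ to $y$), this transfers the positivity of the corresponding event from $\SLE_\kappa$ to $\SLE_\kappa(\rho^L;\rho^R)$; the hypotheses $\rho^L>(\kappa/2-4)\vee(-2)$ and $\rho^R>-2$ are what guarantee that the process is a.s. a continuous curve not pinned at its force points, so that this event is nondegenerate. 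The heart of the matter is then the mutual absolute continuity of $\mu$ and $\tilde{\mu}$ on $A$. I would establish it by a conformal coordinate change: fixing conformal maps $\varphi\colon D\to\HH$ and $\tilde{\varphi}\colon\tilde{D}\to\HH$ with $\varphi(x)=\tilde{\varphi}(x)=0$ and $\varphi(y)=\tilde{\varphi}(y)=\infty$, the images $\varphi(\eta)$ and $\tilde{\varphi}(\tilde{\eta})$ are $\SLE_\kappa(\rho^L;\rho^R)$ in $\HH$ from $0$ to $\infty$, while $\Phi=\tilde{\varphi}\circ\varphi^{-1}$ is conformal on $\varphi(V\cap D)$, fixes $0$ and $\infty$, and maps the image of the arc $(xy)$ to itself. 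Stopping each curve as it exits $\varphi(V\cap D)$, the coordinate-change/Girsanov computation of \cite{SchrammWilsonSLECoordinatechanges} (compare the martingale of Lemma \ref{lem::sle_rho_mart}) shows the two laws differ by a Radon--Nikodym derivative that is strictly positive and finite on $A$, which is the desired mutual absolute continuity. For $\kappa\le 4$ one may argue equivalently through flow lines: couple $\eta,\tilde{\eta}$ as flow lines of Gaussian free fields $h,\tilde{h}$ on $D,\tilde{D}$ with identical boundary data along $(xy)$; since $h|_{V\cap D}$ and $\tilde{h}|_{V\cap D}$ are mutually absolutely continuous, their boundary data differing only far from $V$ (cf. \cite[Equation (4.1)]{MillerSheffieldIG2}), and since the flow line is a deterministic functional of the field, the induced curve laws are mutually absolutely continuous on $A$.

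Combining the two steps, $\mu$ and $\tilde{\mu}$ put positive mass on $A$ and are mutually absolutely continuous there, hence are not mutually singular, and the maximal coupling completes the proof. I expect the main obstacle to be the uniform control of the Radon--Nikodym derivative all along the curve from $x$ to $y$: the conformal maps $\varphi,\tilde{\varphi}$ degenerate at the two endpoints, so the coordinate-change martingale must be controlled both as the curve emerges from $x$ (where the left force point $x_-$ sits, and where the constraint $\rho^L>\kappa/2-4$ is precisely what keeps the behavior tame) and as it terminates at $y$. I would circumvent this by first proving mutual absolute continuity only up to the stopping time at which the curve first comes within distance $\epsilon$ of $y$, where the derivative is uniformly bounded, and then observing that on $A$ the remaining excursion toward $y$ still lies in the common region $V$; passing to the limit $\epsilon\to0$ (using that both curves are continuous up to and including their endpoint, with $V$ a common neighborhood of $y$) then upgrades mutual absolute continuity to the full curve on $A$.
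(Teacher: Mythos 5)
Your argument is sound, but be aware that the paper does not prove this lemma at all: its ``proof'' is a citation to \cite[Lemma 4.2]{MillerSheffieldIG2} and the discussion following it, and the proof given there is precisely the Gaussian free field route that you mention only as an alternative for $\kappa\le 4$ --- couple the fields so that their restrictions to the common region $V\cap D$ agree with positive probability (possible since these restrictions are mutually absolutely continuous, the boundary data differing only away from $(xy)$), and use that the flow line stopped upon exiting $V\cap D$ is a deterministic function of the field restricted there, together with the positive probability of the curve staying in $V$. Your primary route --- reduce to non-singularity via the maximal coupling and prove mutual absolute continuity on the event $A$ by a coordinate change --- is a genuinely different and viable path; what it buys is an explicit Radon--Nikodym derivative of the type in Lemma \ref{lem::sle_domain_mart} and (\ref{eqn::sle_domain_brownianloopmass}), which is bounded above and below on $A$ once you shrink $V$ to some $V'$ with $\overline{V'}\subset V$ so that the curve keeps positive distance from the symmetric difference of the two domains and the Brownian loop mass $m(D; D\setminus V, \eta)$ is finite; this uniform boundedness also resolves your $\eps\to 0$ concern, since a uniformly bounded density martingale is uniformly integrable. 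Two points deserve more care than you give them. First, in the GFF variant, ``the flow line is a deterministic functional of the field'' refers to the field on the whole domain; you need the locality statement that the flow line stopped upon exiting $V\cap D$ is determined by $h|_{V\cap D}$, which is what makes the pushforward argument legitimate on $A$. Second, the step $\mu[A]>0$ is not a soft consequence of the support theorem for the driving function: uniform closeness of driving functions does not confine the curve to a thin tube, and when $\rho^L<\kappa/2-2$ the curve a.s. hits the arc $(yx)$, so you genuinely need the ``positive probability of staying in a tube'' statement for $\SLE_{\kappa}(\rho^L;\rho^R)$; this is also exactly where the hypothesis $\rho^L>\kappa/2-4$ is used, as you correctly indicate, since below that threshold the curve swallows the whole left arc and $\PP[A]=0$.
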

\begin{proof}
Proof of \cite[Lemma 4.2]{MillerSheffieldIG2} and the discussion after the proof there. 
\end{proof}
\subsection{Proof of Theorem \ref{thm::slepair_lessthan4}---Existence and Identification}
Suppose $K$ is an $\HH$-hull such that $\dist(0,K)>0$ and $\R\cap K\subset (0,\infty)$. Let $H:=\HH\setminus K$. Let $\phi$ be the conformal map from $H$ onto $\HH$ such that $\phi(0)=0$ and $\lim_{z\to\infty}\phi(z)/z=1$.
We wish to compare the law of $\SLE_{\kappa}(\rho)$ with force point $v\le 0$ in $\HH$ and the law of $\SLE_{\kappa}(\rho)$ with force point $v\le 0$ in $H$. 
Suppose $\eta$ is an $\SLE_{\kappa}(\rho)$ with force point $v\le 0$ in $\HH$ from 0 to $\infty$ and $(g_t,t\ge 0)$ is the corresponding family of conformal maps, $W_t, t\ge 0$ is the driving function and $(V_t, t\ge 0)$ is the evolution of the force point $v\le 0$. Let $T$ be the first time that $\eta$ hits $K$.  We will study the law of $\tilde{\eta}(t)=\phi(\eta(t))$ for $t<T$. Define $\tilde{g}_t$ to be the conformal map from $\HH\setminus\tilde{\eta}[0,t]$ onto $\HH$ normalized at $\infty$ and let $h_t$ be the conformal map from $\HH\setminus g_t(K)$ onto $\HH$ such that $h_t\circ g_t=\tilde{g}_t\circ\phi$, see \cite[Section 5]{LawlerSchrammWernerConformalRestriction}. Note that $\tilde{W}_t=h_t(W_t)$ is the driving function for $\tilde{\eta}$ and 
\[d\tilde{W}_t=h_t'(W_t)dW_t+(\kappa/2-3)h_t''(W_t)dt.\]
\textit{Schwarzian derivative} for a conformal map $f$ is defined to be 
\[Sf(z)=\frac{f'''(z)}{f'(z)}-\frac{3f''(z)^2}{2f'(z)^2}.\]
\begin{lemma}\label{lem::sle_domain_mart}
Fix $\kappa\in (0,4], \rho>-2$. Let $\eta$ be an $\SLE_{\kappa}(\rho)$ in $\HH$ with force point $v\le 0$. 
The following process is a uniformly integrable martingale:
\[M_t:=\one_{\{t<T\}}h'_t(W_t)^{b_1}h_t'(V_t)^{b_2}\left(\frac{h_t(W_t)-h_t(V_t)}{W_t-V_t}\right)^{b_3}\exp\left(-c\int_0^t\frac{Sh_s(W_s)}{6}ds\right),\]
where 
\[b_1=\frac{6-\kappa}{2\kappa},\quad b_2=\frac{\rho(\rho+4-\kappa)}{4\kappa},\quad b_3=\frac{\rho}{\kappa}, \quad c=\frac{(3\kappa-8)(6-\kappa)}{2\kappa}.\]
Moreover, the Radon-Nikodym derivative between the law of $\SLE_{\kappa}(\rho)$ with force point $0_-$ in $H$ with respect to $\SLE_{\kappa}(\rho)$ with force point $0_-$ in $\HH$ is given by
\[M_{\infty}/M_0=\one_{\{\eta\cap K=\emptyset\}}\phi'(0)^{-b}\exp\left(-c\int_0^{\infty}\frac{Sh_s(W_s)}{6}ds\right),\]
where 
\[b=b_1+b_2+b_3=\frac{(\rho+2)(\rho+6-\kappa)}{4\kappa}.\] 
\end{lemma}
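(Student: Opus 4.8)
The plan is to recognize $M_t$ as the coordinate-change (restriction) martingale that turns $\SLE_\kappa(\rho)$ in $\HH$ into $\SLE_\kappa(\rho)$ in $H$, in the spirit of \cite{LawlerSchrammWernerConformalRestriction, SchrammWilsonSLECoordinatechanges}. First I would record the flow equation for $h_t$. Differentiating the identity $h_t\circ g_t=\tilde g_t\circ\phi$ in $t$, and using that $\tilde g_t$ is a Loewner chain run at speed $s'(t)=h_t'(W_t)^2$, gives for $w\in\HH\setminus g_t(K)$
\[\partial_t h_t(w)=\frac{2\,h_t'(W_t)^2}{h_t(w)-\tilde W_t}-\frac{2\,h_t'(w)}{w-W_t}.\]
Expanding near $w=W_t$ (the limit of the right-hand side at $W_t$ is $-3h_t''(W_t)$) recovers the stated SDE $d\tilde W_t=h_t'(W_t)\,dW_t+(\kappa/2-3)h_t''(W_t)\,dt$ and, after one more $w$-derivative, the evolution of $h_t'(W_t)$; evaluating the flow equation at the non-singular point $V_t$ together with $dV_t=2\,dt/(V_t-W_t)$ gives the evolutions of $h_t'(V_t)$ and of $h_t(W_t)-h_t(V_t)$, both of which turn out to be pure drift after the two singular contributions cancel.

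With these in hand, the local-martingale property is a direct It\^o computation. Writing $A_t=h_t'(W_t)$, $B_t=h_t'(V_t)$ and $D_t=h_t(W_t)-h_t(V_t)$, one finds that $d\log A_t$ carries a Schwarzian drift, while $d\log B_t$ and $d\log\big(D_t/(W_t-V_t)\big)$ are explicit. Assembling $d\log M_t$ and imposing that the drift of $dM_t$ vanish under $dW_t=\sqrt\kappa\,dB_t+\rho\,dt/(W_t-V_t)$ forces exactly the stated constants: $c$ cancels the genuine Schwarzian ($h_t'''$) term, $b_1$ cancels the $(h_t''/h_t')^2$ term, the relation $\kappa b_3=\rho$ cancels the mixed $h_t''/(W_t-V_t)$ term, and $b_2$ cancels the $(h_t'/D_t)^2$ term; this step is lengthy but entirely routine. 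For the identification I would then read off $dM_t=M_t\,\Theta_t\sqrt\kappa\,dB_t$ with $\Theta_t=b_1 h_t''/h_t'+b_3 h_t'/D_t-b_3/(W_t-V_t)$, apply Girsanov to add the drift $\kappa\Theta_t$ to $W$, substitute into the $\tilde W_t$ SDE and pass to the capacity clock of $\tilde\eta$; using $\kappa b_1=3-\kappa/2$ and $\kappa b_3=\rho$ the extra terms collapse to $d\tilde W_s=\sqrt\kappa\,d\hat B_s+\rho\,ds/(\tilde W_s-\tilde V_s)$ with $\tilde V_s=h_t(V_t)$ solving the force-point ODE, so the weighted law of $\tilde\eta=\phi(\eta)$ is $\SLE_\kappa(\rho)$ in $\HH$ and hence the weighted law of $\eta$ is $\SLE_\kappa(\rho)$ in $H$.

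It remains to promote $M_t$ to a uniformly integrable martingale and to evaluate the endpoints; this is where I expect the main difficulty and where the hypotheses $\kappa\le 4$, $\rho>-2$ enter. I would localize at stopping times $T^n$ at which $M$ is bounded (so $M_{t\wedge T^n}$ is a true martingale) and invoke the identification above: the weighted measure is an honest probability measure, namely $\SLE_\kappa(\rho)$ in $H$ with force point $0_-$, which for $\kappa\le 4$, $\rho>-2$ is a transient simple curve that almost surely stays in $H$, i.e.\ avoids $K$. Passing $n\to\infty$ then forces $\E[M_\infty]=M_0$, which is the uniform integrability, provided one checks that $M$ is continuous with $M_{T-}=0$ on $\{T<\infty\}$ (here $b_1=(6-\kappa)/(2\kappa)>0$ makes $A_t^{b_1}\to 0$ as the tip approaches $K$, and one must control the Schwarzian integral in this regime). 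Finally, at $t=0$ we have $h_0=\phi$, $W_0=0$, $V_0=0_-$, so $h_0'(0)=\phi'(0)$ and $(\,h_0(0)-h_0(V_0)\,)/(0-V_0)\to\phi'(0)$, giving $M_0=\phi'(0)^{b_1+b_2+b_3}=\phi'(0)^b$; and on $\{\eta\cap K=\emptyset\}$ one shows $h_t\to\mathrm{id}$ near $W_t$ as $t\to\infty$ (the tip escapes to $\infty$ away from the bounded hull $g_t(K)$), so $A_t,\,B_t,\,D_t/(W_t-V_t)\to 1$ and $M_\infty=\one_{\{\eta\cap K=\emptyset\}}\exp\big(-c\int_0^\infty Sh_s(W_s)\,ds/6\big)$. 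Dividing yields the claimed Radon--Nikodym derivative $M_\infty/M_0$.
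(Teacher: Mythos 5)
Your proposal is correct and follows essentially the same route as the paper, which simply cites \cite{DubedatSLEMart} for the It\^o/local-martingale computation and \cite{WernerWuCLEtoSLE} for the uniform integrability and the evaluation of $M_0$ and $M_\infty$; your flow equation for $h_t$, the Girsanov identification of the weighted law as $\SLE_{\kappa}(\rho)$ in $H$, and the localization-plus-probability-measure argument for uniform integrability are exactly the content of those references. The one point you rightly flag as needing care --- controlling $\exp(-c\int_0^t Sh_s(W_s)\,ds/6)$ against $h_t'(W_t)^{b_1}\to 0$ as $t\to T$ when $c>0$, i.e. $\kappa\in(8/3,4]$ --- is also where the cited argument does its real work, so your outline is faithful to the intended proof.
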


\begin{proof}
One can check by It\^{o}'s Formula that $M$ is a local martingale (see \cite[Lemma 1]{DubedatSLEMart}). The rest of the lemma was proved in \cite[Section 3]{WernerWuCLEtoSLE}. 
\end{proof}

The \textit{Brownian loop measure} is the measure on unrooted Brownian loops. Since we do not need to do calculation with it, we omit the introduction to Brownian loop measure here and refer \cite[Sections 3,4]{LawlerWernerBrownianLoopsoup} for a clear definition. Given a non-empty simply connected domain $\Omega\subsetneq\C$ and two disjoint subsets $V_1, V_2$, denote by $m(\Omega; V_1, V_2)$ the Brownian loop measure of loops in $\Omega$ that intersect both $V_1$ and $V_2$. This quantity is conformal invariant: $m(f(\Omega); f(V_1), f(V_2))=m(\Omega; V_1, V_2)$ for any conformal transformation $f$ on $\Omega$. When both of $V_1, V_2$ are closed, one of them is compact and $\dist(V_1, V_2)>0$, we have $0<m(\Omega; V_1, V_2)<\infty$. It is proved in \cite[Equation (22)]{LawlerWernerBrownianLoopsoup} that$-(1/6)\int_0^{t}Sh_s(W_s)ds=m(\HH; K, \eta[0,t])$.
Thus we have the Radon-Nikodym derivative between the law of $\SLE_{\kappa}(\rho)$ with force point $0_-$ in $H$ with respect to $\SLE_{\kappa}(\rho)$ with force point $0_-$ in $\HH$ is given by
\begin{equation}\label{eqn::sle_domain_brownianloopmass}
M_{\infty}/M_0=\one_{\{\eta\cap K=\emptyset\}}\phi'(0)^{-b}\exp(c m(\HH; K, \eta)).
\end{equation}

\begin{proof}[Proof of Theorem \ref{thm::slepair_lessthan4}, Existence and Identification] 
\textit{First}, we will construct a probability measure on $(\eta^L;\eta^R)\in X_0(\Omega; x^L, x^R, y^R, y^L)$. 
Fix $\kappa\in (0,4], \rho^L>-2, \rho^R>-2$ and $0<x<y$. By conformal invariance, it is sufficient to give the construction for the quad $(\HH; 0,x,y,\infty)$. 
Denote by $\PP_L$ the law of $\SLE_{\kappa}(\rho^L)$ in $\HH$ from 0 to $\infty$ with force point $0_-$ and denote by $\PP_R$ the law of $\SLE_{\kappa}(\rho^R)$ in $\HH$ from $x$ to $y$ with force point $x_+$. Define the measure $\mu$ on $X_0(\HH; 0,x,y,\infty)$ by 
\[\mu[d\eta^L, d\eta^R]=\one_{\{\eta^L\cap\eta^R=\emptyset\}}\exp\left(c m(\HH; \eta^L, \eta^R)\right)\PP_L\left[d\eta^L\right]\otimes\PP_R\left[d\eta^R\right].\] 
We argue that the total mass of $\mu$, denoted by $|\mu|$, is finite. Given $\eta^L\in X_0(\HH; 0,\infty)$, let $g$ be any conformal map from the connected component of $\HH\setminus \eta^L$ with $(xy)$ on the boundary onto $\HH$, then we have
\begin{align*}
|\mu|&=\E_L\otimes\E_R\left[\one_{\{\eta^L\cap\eta^R=\emptyset\}}\exp\left(c m(\HH; \eta^L, \eta^R)\right)\right]\\
&=\E_L\left[\left(\frac{g'(x)g'(y)}{(g(x)-g(y))^2}\right)^b\right]\tag{By Lemma \ref{lem::sle_domain_mart} and (\ref{eqn::sle_domain_brownianloopmass})}\\
&\le (y-x)^{-2b}. \tag{where $b=(\rho^R+2)(\rho^R+6-\kappa)/(4\kappa)$.}
\end{align*}
This implies that $|\mu|$ is positive finite. We define the probability measure $\PP$ to be $\mu/|\mu|$. 
\smallbreak
\textit{Second}, we show that, under $\PP$, the conditional law of $\eta^R$ given $\eta^L$ is $\SLE_{\kappa}(\rho^R)$. By the symmetry in the definition of $\PP$, we know that the conditional law of $\eta^L$ given $\eta^R$ is $\SLE_{\kappa}(\rho^L)$ and hence $\PP$ satisfies the property in ``Uniqueness". Given $\eta^L$, denote by $H$ the connected component of $\HH\setminus \eta^L$ with $(xy)$ on the boundary and let $g$ be any conformal map from $H$ onto $\HH$. Denote by $\PP_R$ the law of $\SLE_{\kappa}(\rho^R)$ in $\HH$ from $x$ to $y$ and by $\tilde{\PP}_R$ the law of $\SLE_{\kappa}(\rho^R)$ in $H$ from $x$ to $y$. By Lemma \ref{lem::sle_domain_mart},
for any bounded continuous function $\LF$ on continuous curves, we have 
\begin{align*}
\E\left[\LF(\eta^R)\cond \eta^L\right]
&=|\mu|^{-1}\E_R\left[\one_{\{\eta^L\cap\eta^R=\emptyset\}}\exp\left(c m(\HH; \eta^L, \eta^R)\right)\LF(\eta^R)\right]\\
&=|\mu|^{-1}\left(\frac{g'(x)g'(y)}{(g(x)-g(y))^2}\right)^b\tilde{\E}_R\left[\LF(\eta^R)\right].
\end{align*}
This implies that the conditional law of $\eta^R$ given $\eta^L$ is $\SLE_{\kappa}(\rho^R)$ in $H$. 
\smallbreak
\textit{Finally}, we show that, under $\PP$ and fixing $\rho^L=0$, the marginal law of $\eta^L$ is $\hSLE_{\kappa}(\rho^R)$. In fact, the above equation implies that the law of $\eta^L$ is the law of $\SLE_{\kappa}$ in $\HH$ from 0 to $\infty$ weighted by \[\left(\frac{g'(x)g'(y)}{(g(x)-g(y))^2}\right)^b.\]
Therefore, by Proposition \ref{prop::hypersle_mart} and the argument in its proof, we see that the law of $\eta^L$ coincides with $\hSLE_{\kappa}(\rho^R)$ as desired.
\end{proof}

\begin{figure}[ht!]
\begin{subfigure}[b]{0.19\textwidth}
\begin{center}
\includegraphics[width=\textwidth]{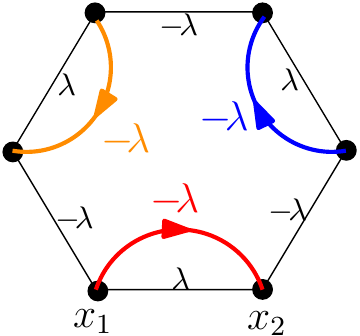}
\end{center}
\caption{\{12,34,56\}}
\end{subfigure}
\begin{subfigure}[b]{0.19\textwidth}
\begin{center}
\includegraphics[width=\textwidth]{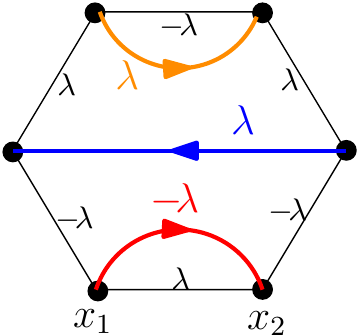}
\end{center}
\caption{\{12,36,45\}}
\end{subfigure}
\begin{subfigure}[b]{0.19\textwidth}
\begin{center}
\includegraphics[width=\textwidth]{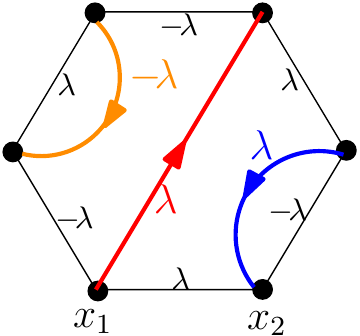}
\end{center}
\caption{\{14,23,56\}}
\end{subfigure}
\begin{subfigure}[b]{0.19\textwidth}
\begin{center}
\includegraphics[width=\textwidth]{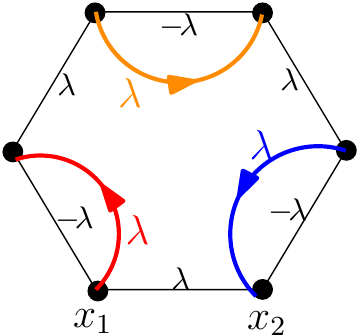}
\end{center}
\caption{\{16,23,45\}}
\end{subfigure}
\begin{subfigure}[b]{0.19\textwidth}
\begin{center}
\includegraphics[width=\textwidth]{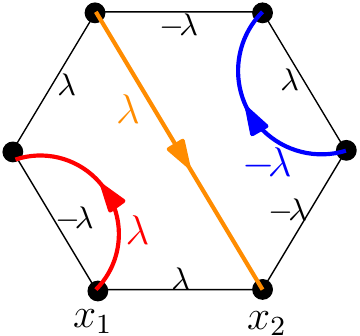}
\end{center}
\caption{\{16,25,34\}}
\end{subfigure}

\caption{\label{fig::threesle} Fix six boundary points $x_1, x_2, ..., x_6$ (in counterclockwise order) and suppose $h$ is a $\GFF$ with alternating boundary conditions: $\lambda$ on $(x_1, x_2)\cup(x_3,x_4)\cup(x_5,x_6)$ and $-\lambda$ elsewhere. Let $\eta_1$ (red) be the level line starting from $x_1$, $\eta_2$ (blue) be the level line starting from $x_3$, and $\eta_3$ (orange) be the level line starting from $x_5$. The end points of $\{\eta_1, \eta_2,\eta_3\}$ form a planar pair partition of $\{1,2,3,4,5,6\}$.}
\end{figure}

\begin{remark}\label{rem::slepair_lessthan4_degenerate}
[Degenerate case in Theorem \ref{thm::slepair_lessthan4}---level lines of $\GFF$]
When $\kappa=4$, the SDE (\ref{eqn::hyperSLE_sde}) degenerates to the SDE (\ref{eqn::sle_rho_sde}).  In other words, $\hSLE_4(\rho)$ in $\HH$ with marked points $(x,y)$ is the same as $\SLE_4(\rho+2,-\rho-2)$ with force points $(x,y)$. In this degenerate case, the pair of curves in Theorem \ref{thm::slepair_lessthan4} can be realized by a pair of level lines of $\GFF$. 
Fix a quad $(\Omega; x^L, y^L, y^R, x^R)$. Let $h$ be the $\GFF$ on $\Omega$ with the following boundary data:
$-\lambda(1+\rho^L)$ on $(y^Lx^L)$, $\lambda$ on $(x^Lx^R)\cup(y^Ry^L)$, and $\lambda(3+\rho^R)$ on $(x^Ry^R)$.
Let $\eta^L$ be the level line of $h$ with height zero starting from $x^L$ and let $\eta^R$ be the level line of $h$ with height $2\lambda$ starting from $x^R$. Then we have that the marginal law of $\eta^L$ is $\SLE_4(\rho^L; \rho^R+2,-\rho^R-2)$ and the conditional law of $\eta^L$ given $\eta^R$ is $\SLE_4(\rho^L)$; the marginal law of $\eta^R$ is $\SLE_4(-\rho^L-2, \rho^L+2; \rho^R)$ and the conditional law of $\eta^R$ given $\eta^L$ is $\SLE_4(\rho^R)$. Thus the pair $(\eta^L;\eta^R)$ is the unique pair described in Theorem \ref{thm::slepair_lessthan4} for $\kappa=4$. 

Furthermore, the case with $\kappa=4$ can be easily generalized to multiple curves. Fix $N\ge 2$, a simply connected proper subset  $\Omega\subset\C$, and $x_1, x_2, ..., x_{2N}$ along the boundary of $\Omega$ in counterclockwise order. For each planar pair partition $\alpha=\{a_1b_1, ..., a_Nb_N\}$ of $\{1,2,3,...,2N\}$, let $X^{\alpha}_0(\Omega; x_1, ..., x_{2N})$ be the collection of non-intersecting simple curves $\{\eta_1, ..., \eta_N\}$ such that $\eta_j\in X_0(\Omega; x_{a_j}, x_{b_j})$ for $1\le j\le N$. Then there exists a unique probability measure on $X^{\alpha}_0(\Omega; x_1, ..., x_{2N})$ with the following property: the conditional law of $\eta_j$ given $\{\eta_1, ..., \eta_{j-1}, \eta_{j+1},...,\eta_N\}$ is $\SLE_4$ process for each $j\in\{1,...,N\}$. 

Moreover, this unique probability measure can be obtained by level lines of $\GFF$. Suppose $h$ is a $\GFF$ in $\Omega$ with alternating boundary conditions: $\lambda$ on $\cup_{1\le j\le N}(x_{2j-1}x_{2j})$ and $-\lambda$ on $\cup_{1\le j\le N}(x_{2j}x_{2j+1})$ (with the convention that $x_{2N+1}=x_1$). For $1\le j\le N$, let $\eta_j$ be the level line of $h$ starting from $x_{2j-1}$. Then the end points of $\{\eta_1, ..., \eta_N\}$ form a planar pair partition $\LA$ of $\{1, 2, ..., 2N\}$, see Figure \ref{fig::threesle}. For each planar pair partition $\alpha$ of $\{1,2,..., 2N\}$, the probability $\PP[\LA=\alpha]$ is strictly positive. Conditioned on the event $\{\LA=\alpha\}$, the collection of level lines $\{\eta_1, ..., \eta_N\}$ is in $X_0^{\alpha}(\Omega; x_1, ..., x_{2N})$, and it has the property that the conditional law of $\eta_j$ given $\{\eta_1, ..., \eta_{j-1}, \eta_{j+1}, \eta_N\}$ is the same as the level line of $\GFF$ with Dobrushin boundary condition, thus the conditional law is $\SLE_4$, for all $j\in\{1,...,N\}$.  
\end{remark}

\subsection{Proof of Proposition \ref{prop::slemultiple_degenerate}}
\begin{lemma} \label{lem::slemultiple}
Fix a Dobrushin domain $(\Omega; x,y)$ and $\kappa\in (0,8)$, $\rho^L>-2$, $\rho^R>-2$ and $\nu\ge\kappa/2-2$. There exists a unique probability measure on pairs of curves $(\eta^L;\eta^R)$ in $X_0^2(\Omega; x, y)$ with the following property: the conditional law of $\eta^R$ given $\eta^L$ is $\SLE_{\kappa}(\nu; \rho^R)$ and the conditional law of $\eta^L$ given $\eta^R$ is $\SLE_{\kappa}(\rho^L;\nu)$. Under this probability measure, the marginal law of $\eta^L$ is $\SLE_{\kappa}(\rho^L; \rho^R+\nu+2)$ and the marginal law of $\eta^R$ is $\SLE_{\kappa}(\rho^L+\nu+2;\rho^R)$.
\end{lemma}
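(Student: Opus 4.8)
The plan is to prove the three assertions—uniqueness, existence, and identification of both marginals—in turn, reducing the whole range $\kappa\in(0,8)$ to the case $\kappa\in(0,4]$ at the outset. For \textbf{uniqueness}, observe that since $\nu\ge\kappa/2-2$ the conditional law $\SLE_{\kappa}(\nu;\rho^R)$ of $\eta^R$ given $\eta^L$ does not touch its left boundary (which contains $\eta^L$) except at $x,y$, and symmetrically for $\eta^L$ given $\eta^R$; hence the two curves are disjoint away from the shared endpoints. This is exactly the hypothesis under which the resampling-Markov-chain argument of Proposition \ref{prop::slepair_uniqueness_general} works (non-intersection is the only place that argument is used), so after conditioning on $X_0^{\eps}$ and applying Lemma \ref{lem::sle_close_positive} one obtains at most one such measure. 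For $\kappa'\in(4,8)$ I would mirror the reduction preceding Proposition \ref{prop::slepair_uniqueness_general}: set $\kappa=16/\kappa'\in(2,4)$, pass to the right boundary $\eta^L_-$ of $\eta^L$ and the left boundary $\eta^R_+$ of $\eta^R$, and use Lemma \ref{lem::counterflowline_decomposition} to see that $(\eta^L_-;\eta^R_+)$ is a pair of the same type for $\kappa$, with outer weights $\kappa/2-2+\kappa\rho^L/4$ and $\kappa/2-2+\kappa\rho^R/4$ and inter-curve weight $\kappa-4+\kappa\nu/4$. One checks $\kappa/2-2+\kappa\rho^{L/R}/4>-2$ (i.e.\ $\rho^{L/R}>-2$) and $\kappa-4+\kappa\nu/4\ge\kappa/2-2$ (i.e.\ $\nu\ge\kappa'/2-2$), so the $\kappa\le 4$ conclusion applies to the boundary pair, and $(\eta^L;\eta^R)$ is reconstructed from it via the $\SLE_{\kappa'}(\kappa'/2-4;\,\cdot\,)$ fill-in laws of Lemma \ref{lem::counterflowline_decomposition}; this transfers both existence and the marginal identification.

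For \textbf{existence and identification} when $\kappa\le 4$, I would avoid the symmetric Brownian-loop construction of Theorem \ref{thm::slepair_lessthan4}: with $x^L=x^R$ and $y^L=y^R$ the loop mass $m(\HH;\eta^L,\eta^R)$ diverges at the two shared endpoints, so the total-mass computation there breaks down. Instead I build the measure sequentially. Map $(\Omega;x,y)$ to $(\HH;0,\infty)$, sample $\eta^L\sim\SLE_{\kappa}(\rho^L;\rho^R+\nu+2)$ with force points $(0_-,0_+)$, and then in the component $H$ of $\HH\setminus\eta^L$ to the right of $\eta^L$ sample $\eta^R\sim\SLE_{\kappa}(\nu;\rho^R)$ from $0$ to $\infty$ with left force point on $\eta^L$ and right force point $0_+$. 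Because $\rho^R+\nu+2>\kappa/2-2$ and $\nu\ge\kappa/2-2$, the curve $\eta^L$ stays off $(0,\infty)$ and $\eta^R$ stays off $\eta^L$, so $(\eta^L;\eta^R)\in X^2_0(\Omega;x,y)$, which gives existence; and the marginal of $\eta^L$ is $\SLE_{\kappa}(\rho^L;\rho^R+\nu+2)$ by construction. It then remains only to verify the reverse conditional—that given $\eta^R$ the law of $\eta^L$ is $\SLE_{\kappa}(\rho^L;\nu)$ in the left component—and, by the left–right symmetry of that statement (swapping $\rho^L\leftrightarrow\rho^R$), the identical computation yields the marginal $\SLE_{\kappa}(\rho^L+\nu+2;\rho^R)$ for $\eta^R$.

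The reverse conditional is the heart of the argument. I would compute the joint density of $(\eta^L,\eta^R)$ with respect to the symmetric reference consisting of plain $\SLE_{\kappa}$ for each curve, restricted to the event that they are disjoint: express the conditional density $p(\eta^R\mid\eta^L)$ of $\SLE_{\kappa}(\nu;\rho^R)$ in the moving domain $H$ through Lemma \ref{lem::sle_rho_mart} applied in $H$, convert the domain-dependence into $\HH$-quantities via the restriction formula of Lemma \ref{lem::sle_domain_mart} and (\ref{eqn::sle_domain_brownianloopmass}), and multiply by the Radon–Nikodym weight of $\SLE_{\kappa}(\rho^L;\rho^R+\nu+2)$ against plain $\SLE_{\kappa}$ from Lemma \ref{lem::sle_rho_mart}. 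The goal is to rewrite the result as a factor depending on $\eta^R$ alone times the weight of $\SLE_{\kappa}(\rho^L;\nu)$ in the left component of $\eta^R$ and its restriction factor; by Bayes this identifies the reverse conditional and pins down the unique measure. The \textbf{main obstacle} is that the Brownian-loop term $\exp(c\,m(\HH;\eta^L,\eta^R))$ and the boundary derivatives $\phi'(0)^{\bullet}$ in the two restriction formulas each blow up as the curves approach the common endpoints $0$ and $\infty$; the substance of the identity is that these divergences cancel once the force-point exponents are accounted for, the shift $\rho^R\mapsto\rho^R+\nu+2$ being exactly what absorbs the near-endpoint loop mass. Carefully tracking these exponents at $0$ and at $\infty$ (equivalently, checking that the surviving power of the Poisson-kernel contribution is the correct one) is the delicate step, and it also explains why the hypergeometric factor of Theorem \ref{thm::slepair_lessthan4} degenerates to a pure power here, so that the marginals are ordinary $\SLE_{\kappa}(\rho)$ processes rather than $\hSLE$.
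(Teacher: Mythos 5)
Your reduction from $\kappa\in(4,8)$ to $\kappa\in(0,4]$ and your treatment of uniqueness essentially match the paper (the paper defers the coinciding-endpoint reduction to Miller--Sheffield rather than re-running the resampling chain, but the idea is the same). The genuine gap is in existence and identification. Your sequential construction gives the marginal of $\eta^L$ and the conditional law of $\eta^R$ given $\eta^L$ by fiat; everything then hinges on verifying that the conditional law of $\eta^L$ given $\eta^R$ is $\SLE_\kappa(\rho^L;\nu)$, and this is precisely the step you do not carry out. You correctly observe that the loop mass $m(\HH;\eta^L,\eta^R)$ and the boundary-derivative factors diverge at the shared endpoints $0$ and $\infty$, and you assert that the divergences cancel once the force-point exponents are accounted for --- but that cancellation \emph{is} the content of the identification, and the paper's toolbox does not let you write it down directly: Lemma \ref{lem::sle_domain_mart} and (\ref{eqn::sle_domain_brownianloopmass}) are established only for hulls $K$ with $\dist(0,K)>0$ and $K\cap\R\subset(0,\infty)$, which fails when $K$ is a curve emanating from $0$. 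One would need either a regularization/limiting argument (distinct endpoints merging) or a computation with renormalized quantities, neither of which you supply. As written, the proposal names the obstacle but does not overcome it.

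The paper avoids the issue entirely by realizing the pair as the flow lines of angles $\pm\theta$, with $\theta=\lambda(\nu/2+1)/\chi$, of a single $\GFF$ on $\HH$ with boundary data $-\lambda(2+\rho^L+\nu/2)$ on $\R_-$ and $\lambda(2+\rho^R+\nu/2)$ on $\R_+$. The flow-line interaction rules recalled in Section 2.5 then give both conditional laws and both marginal laws simply by reading off boundary data --- no Radon--Nikodym derivative, no loop measure, no divergence to cancel. If you want to keep your sequential construction, the cleanest repair is either to identify your measure with this flow-line pair, or to run your density computation for distinct endpoints --- where Theorem \ref{thm::slepair_lessthan4} already does the work --- and then pass to the limit as the endpoints merge, in the spirit of Lemma \ref{lem::sle_conditioned} together with continuity of $\SLE_\kappa(\rho)$ in the locations of the force points.
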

\begin{proof}
By the argument before the proof of Proposition \ref{prop::slepair_uniqueness_general}, we only need to show the conclusion for $\kappa\in (0,4]$. We first show the existence of the pair by checking that a certain pair of flow lines in $\GFF$ satisfies the desired property. Suppose $h$ is a $\GFF$ in $\HH$ with boundary data 
\[-\lambda(2+\rho^L+\nu/2)\text{ on }(-\infty,0),\quad \lambda(2+\rho^R+\nu/2)\text{ on }(0,\infty).\]
Set $\theta=\lambda(\nu/2+1)/\chi>0$. Let $\eta^L$ be the flow line of $h$ with angle $\theta$ and $\eta^R$ be the flow line of $h$ with angle $-\theta$. Then one can check that the conditional law of $\eta^R$ given $\eta^L$ is $\SLE_{\kappa}(\nu; \rho^R)$ and the conditional law of $\eta^L$ given $\eta^R$ is $\SLE_{\kappa}(\rho^L;\nu)$; and that 
the marginal laws of $\eta^L$ and $\eta^R$ are the same as the one in the statement. 
It remains to show the uniqueness. This can be obtained by reducing to the setting that the end points are distinct---Proposition \ref{prop::slepair_uniqueness_general}. 
This is explained at the begining of \cite[Proof of Theorem 4.1]{MillerSheffieldIG2}.
\end{proof}

\begin{proof}[Proof of Proposition \ref{prop::slemultiple_degenerate}]
We first check that the curves $(\eta_1;...;\eta_n)$ can be realized by flow lines of $\GFF$. Suppose $h$ is a $\GFF$ on $\HH$ with the boundary data: $-n\lambda$ on $\R_-$ and $n\lambda$ on $\R_+$. Set $\theta_j=(n+1-2j)\lambda/\chi$ for $j=1,...,n$. Let $\eta_j$ be the flow line of $h$ with angle $\theta_j$. Then one can check that the conditional law of $\eta_j$ given $\eta_{j-1}$ and $\eta_{j+1}$ is $\SLE_{\kappa}$ for $j\in\{1,..., n\}$ and the marginal law of $\eta_j$ is $\SLE_{\kappa}(2j-2;2n-2j)$. 

We prove the uniqueness by induction on $n$. Lemma \ref{lem::slemultiple} implies the conclusion for $n=2$. Suppose the conclusion holds for $n-1$ path and consider $(\eta_1;...;\eta_n)$. Applying the hypothesis to $(\eta_2;...;\eta_n)$, we know the conditional law of $(\eta_2;...;\eta_n)$ given $\eta_1$. In particular, we know that the conditional law of $\eta_2$ given $\eta_1$ is $\SLE_{\kappa}(2n-4)$ and the conditional law of $\eta_1$ given $\eta_2$ is $\SLE_{\kappa}$. Thus, by Lemma \ref{lem::slemultiple}, we know that the marginal law of $\eta_1$ is $\SLE_{\kappa}(2n-2)$. The marginal law of $\eta_1$ and the conditional law of $(\eta_2;...;\eta_n)$ given $\eta_1$ uniquely determine the law of $(\eta_1;...;\eta_n)$. This completes the proof.  
\end{proof}
\begin{remark}
The conclusions in Proposition \ref{prop::slemultiple_degenerate} hold as long as the terminal points of curves coincide. Suppose that $a_1, ..., a_n, b$ are boundary points of $\Omega$ in counterclockwise order and denote by $X_0^n(\Omega; a_1, ..., a_n, b)$ the collection of curves $(\eta_1; ...; \eta_n)$ where $\eta_j\in X_0(\Omega; a_j, b)$ and it is to the right of $\eta_{j-1}$ and is to the left of $\eta_{j+1}$ for $j\in\{1,...,n\}$ with the convention that $\eta_0=(ba_1)$ and $\eta_{n+1}=(a_nb)$. Fix $\kappa\in (0,4]$. Then there is a unique probability measure on $X_0^n(\Omega; a_1, ..., a_n, b)$ such that the the conditional law of $\eta_j$ given $\eta_{j-1}$ and $\eta_{j+1}$ is $\SLE_{\kappa}$ for $j\in\{1,...,n\}$. Under this measure, the marginal law of $\eta_j$ is $\SLE_{\kappa}(\underline{\rho}^L_j;\underline{\rho}^R_j)$ where each of $\{a_1, ..., a_{j-1}, a_{j+1},..., a_n\}$ corresponds to a force point with weight $2$. 
\end{remark}

\section{Ising Model and Proof of Theorem \ref{thm::ising_cvg_pair}}\label{sec::ising}
\textbf{Notations and terminologies.}
We focus on the square lattice $\Z^2$. Two vertices $x=(x_1, x_2)$ and $y=(y_1, y_2)$ are neighbors if $|x_1-y_1|+|x_2-y_2|=1$, and we write $x\sim y$. 
The \textit{dual square lattice} $(\Z^2)^*$ is the dual graph of $\Z^2$. The vertex set is $(1/2, 1/2)+\Z^2$ and the edges are given by nearest neighbors.  The vertices and edges of $(\Z^2)^*$ are called dual-vertices and dual-edges. In particular, for each edge $e$ of $\Z^2$, it is associated to a dual edge, denoted by $e^*$, that it crosses $e$ in the middle. 
For a finite subgraph $G$, we define $G^*$ to be the subgraph of $(\Z^2)^*$ with edge-set $E(G^*)=\{e^*: e\in E(G)\}$ and vertex set given by the end-points of these dual-edges. The \textit{medial lattice} $(\Z^2)^{\diamond}$ is the graph with the centers of edges of $\Z^2$ as vertex set, and edges connecting nearest vertices. This lattice is a rotated and rescaled version of $\Z^2$, see Figure \ref{fig::medial_lattice}. The vertices and edges of $(\Z^2)^{\diamond}$ are called medial-vertices and medial-edges. We identify the faces of $(\Z^2)^{\diamond}$ with the vertices of $\Z^2$ and $(\Z^2)^*$. A face of $(\Z^2)^{\diamond}$ is said to be black if it corresponds to a vertex of $\Z^2$ and white if it corresponds to a vertex of $(\Z^2)^*$. 

\begin{figure}[ht!]
\begin{subfigure}[b]{0.33\textwidth}
\begin{center}
\includegraphics[width=0.8\textwidth]{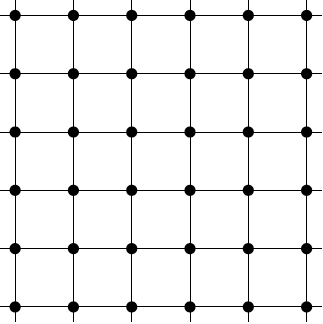}
\end{center}
\caption{The square lattice. }
\end{subfigure}
\begin{subfigure}[b]{0.33\textwidth}
\begin{center}\includegraphics[width=0.8\textwidth]{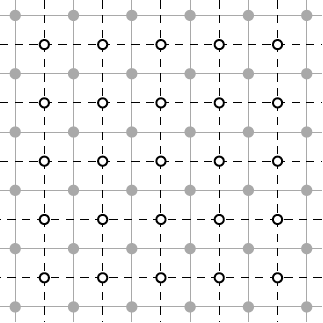}
\end{center}
\caption{The dual square lattice.}
\end{subfigure}
\begin{subfigure}[b]{0.33\textwidth}
\begin{center}
\includegraphics[width=0.8\textwidth]{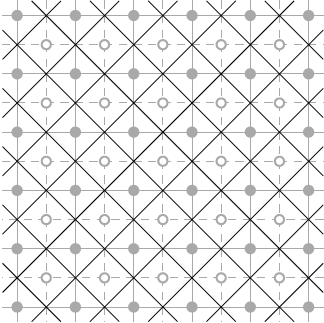}
\end{center}
\caption{The medial lattice. }
\end{subfigure}
\caption{\label{fig::medial_lattice} The lattices. }
\end{figure}

\subsection{Ising model}
Let $\Omega$ be a finite subset of $\Z^2$. The Ising model with free boundary conditions is a random assignment $\sigma\in \{\ominus, \oplus\}^{\Omega}$ of spins $\sigma_x\in \{\ominus, \oplus\}$, where $\sigma_x$ denotes the spin at the vertex $x$. The Hamiltonian of the Ising model is defined by 
\[H^{\free}_{\Omega}(\sigma)=-\sum_{x\sim y}\sigma_x\sigma_y.\] 
The Ising measure is the Boltzmann measure with Hamiltonian $H^{\free}_{\Omega}$ and inverse-temperature $\beta>0$: 
\[\mu^{\free}_{\beta,\Omega}[\sigma]=\frac{\exp(-\beta H^{\free}_{\Omega}(\sigma))}{Z^{\free}_{\beta, \Omega}},\quad\text{where }Z^{\free}_{\beta, \Omega}=\sum_{\sigma}\exp(-\beta H^{\free}_{\Omega}(\sigma)).\]

For a graph $\Omega$ and $\tau\in \{\ominus, \oplus\}^{\Z^2}$, one may also define the Ising model with boundary conditions $\tau$ by the Hamiltonian
\[H^{\tau}_{\Omega}(\sigma)=-\sum_{x\sim y, \{x,y\}\cap \Omega\neq\emptyset}\sigma_x\sigma_y,\quad \text{if }\sigma_x=\tau_x, \forall x\not\in \Omega.\]
Ising model satisfies Domain Markov property: 
Let $\Omega\subset \Omega'$ be two finite subsets of $\Z^2$. Let $\tau\in \{\ominus, \oplus\}^{\Z^2}$ and $\beta>0$. Let $X$ be a random variable which is measurable with respect to vertices in $\Omega$. Then we have 
\[\mu^{\tau}_{\beta, \Omega'}[X\cond \sigma_x=\tau_x, \forall x\in \Omega'\setminus \Omega]=\mu^{\tau}_{\beta, \Omega}[X].\] 

Suppose that $(\Omega; a, b)$ is a Dobrushin domain. The \textit{Dobrushin boundary condition} is the following: $\oplus$ along $(ab)$, and $\ominus$ along $(ba)$. This boundary condition is also called domain-wall boundary condition. Suppose that $(\Omega; a, b, c, d)$ is a quad. The \textit{alternating boundary condition} is the following: $\oplus$ along $(ab)$ and $(cd)$, and $\ominus$ along $(bc)$ and $(da)$. 

The set $\{\ominus, \oplus\}^{\Omega}$ is equipped with a partial order: $\sigma\le \sigma'$ if $\sigma_x\le \sigma_x'$ for all $x\in \Omega$. A random variable $X$ is increasing if $\sigma\le \sigma'$ implies $X(\sigma)\le X(\sigma')$. An event $\LA$ is increasing if $\one_{\LA}$ is increasing. The Ising model satisfies FKG inequality:
Let $\Omega$ be a finite subset and $\tau$ be boundary conditions, and $\beta>0$. For any two increasing events $\LA$ and $\LB$, we have 
\[\mu^{\tau}_{\beta, \Omega}[\LA\cap\LB]\ge \mu^{\tau}_{\beta, \Omega}[\LA]\mu^{\tau}_{\beta, \Omega}[\LB].\]
As a consequence of FKG inequality, we have the comparison between boundary conditions: For boundary conditions $\tau_1\le \tau_2$ and an increasing event $\LA$, we have
\begin{equation}\label{eqn::ising_boundary_comparison}
\mu^{\tau_1}_{\beta, \Omega}[\LA]\le \mu^{\tau_2}_{\beta, \Omega}[\LA]. 
\end{equation}

The critical value of $\beta$ in Ising model is given by:
\[\beta_c=\frac{1}{2}\log(1+\sqrt{2}).\] 
The critical Ising model is conformal invariant in the scaling limit. We will list two special properties for the critical Ising model that will be useful later: strong RSW and the convergence of the interface with Dobrushin boundary condition. 

Given a quad $(Q; a, b, c, d)$ on the square lattice, we denote by $d_{Q}((ab), (cd))$ the discrete extermal distance between $(ab)$ and $(cd)$ in $Q$, see \cite[Section 6]{ChelkakRobustComplexAnalysis}. The discrete extremal distance is uniformly comparable to and converges to its continuous counterpart--- the classical extremal distance. The quad $(Q; a, b, c, d)$ is crossed by $\oplus$ in an Ising configuration $\sigma$ if there exists a path of $\oplus$ going from $(ab)$ to $(cd)$ in $Q$. We denote this event by $(ab)\overset{\oplus}{\longleftrightarrow}(cd)$. 
\begin{proposition}
\label{prop::ising_rsw}\cite[Corollary 1.7]{ChelkakDuminilHonglerCrossingprobaFKIsing}.
For each $L>0$ there exists $c(L)>0$ such that the following holds: for any quad $(Q; a, b, c, d)$ with $d_{Q}((ab), (cd))\ge L$, 
\[\mu^{\text{mixed}}_{\beta_c, Q}\left[(ab)\overset{\oplus}{\longleftrightarrow}(cd)\right]\le 1-c(L),\]
where the boundary conditions are $\free$ on $(ab)\cup(cd)$ and $\ominus$ on $(bc)\cup(da)$. 
\end{proposition}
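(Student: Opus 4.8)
The statement is a strong RSW (Russo--Seymour--Welsh) type crossing estimate for the critical planar Ising model, and I would obtain it exactly as in the cited work \cite{ChelkakDuminilHonglerCrossingprobaFKIsing}. The plan is to pass to the complementary (dual) event and bound it from below. By planar duality for spin configurations on $\Z^2$ --- nearest-neighbour $\oplus$-connections are dual to $*$-connections (diagonal steps allowed) of $\ominus$ --- the event $(ab)\overset{\oplus}{\longleftrightarrow}(cd)$ fails if and only if there is a $*$-connected path of $\ominus$ spins from $(bc)$ to $(da)$ inside $Q$. Hence
\[\mu^{\text{mixed}}_{\beta_c, Q}\left[(ab)\overset{\oplus}{\longleftrightarrow}(cd)\right]=1-\mu^{\text{mixed}}_{\beta_c, Q}\left[(bc)\overset{\ominus,*}{\longleftrightarrow}(da)\right],\]
so it suffices to produce a constant $c(L)>0$, depending only on $L$, with $\mu^{\text{mixed}}_{\beta_c, Q}[(bc)\overset{\ominus,*}{\longleftrightarrow}(da)]\ge c(L)$.

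First I would translate this spin crossing into a connectivity event for the critical FK-Ising (random-cluster, $q=2$) model through the Edwards--Sokal coupling. Under the mixed boundary conditions ($\free$ on $(ab)\cup(cd)$, $\ominus$ on $(bc)\cup(da)$) the coupled FK configuration has the two arcs $(bc)$ and $(da)$ wired and the remaining boundary free; conditionally on the FK configuration the spins are constant on clusters, the two wired arcs carrying spin $\ominus$. Consequently, on the event that $(bc)$ and $(da)$ lie in the same FK cluster the whole cluster is coloured $\ominus$, which in particular yields a $\ominus$ (indeed nearest-neighbour, hence $*$-connected) crossing between the two arcs. Therefore
\[\mu^{\text{mixed}}_{\beta_c, Q}\left[(bc)\overset{\ominus,*}{\longleftrightarrow}(da)\right]\ge \phi_{p_c, Q}\left[(bc)\longleftrightarrow(da)\right],\]
where $\phi_{p_c,Q}$ is the critical FK-Ising measure with $(bc),(da)$ wired and the rest free, and the connection is an FK (primal) connection.

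It remains to lower bound the FK-Ising crossing probability, and here the main input is the strong RSW theory for critical FK-Ising. The extremal-distance hypothesis enters through the reciprocal identity $d_Q((bc),(da))\,d_Q((ab),(cd))=1$ (valid for the continuous extremal distance of a conformal rectangle, with the discrete version comparable up to universal constants), so $d_Q((ab),(cd))\ge L$ forces $d_Q((bc),(da))\le 1/L$: the desired FK-connection is a crossing of $Q$ in the \emph{easy} direction. The strong RSW estimate for critical FK-Ising asserts that the probability of crossing a quad is bounded away from $0$ and $1$ by a quantity depending only on its extremal distance, uniformly over all quads and all mesh sizes; applied here it gives $\phi_{p_c,Q}[(bc)\longleftrightarrow(da)]\ge c(L)>0$, which closes the argument. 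The hard part is precisely this last uniform crossing bound: its proof rests on Smirnov's discrete fermionic (s-holomorphic) observable together with the self-duality of FK-Ising at $p_c$, and the delicate point is the uniformity over all shapes of quads with $d_Q\ge L$ and over all lattice scales $\delta\to 0$. This uniform control is exactly the content of \cite{ChelkakDuminilHonglerCrossingprobaFKIsing}, and I would invoke it as a black box rather than reprove the complex-analytic estimates.
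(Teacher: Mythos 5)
The paper offers no proof of this proposition: it is quoted directly as Corollary 1.7 of \cite{ChelkakDuminilHonglerCrossingprobaFKIsing}, and your reduction (spin duality to a $*$-connected $\ominus$-crossing, the Edwards--Sokal coupling to critical FK-Ising with the $\ominus$-arcs wired, and the uniform RSW bound of that paper applied in the conjugate direction via the reciprocity of extremal distance) is precisely how that corollary is obtained there from their Theorem 1.1, so you are taking essentially the same route with the same black box. One minor imprecision: under the Edwards--Sokal marginal the arcs $(bc)$ and $(da)$ are wired \emph{together} through the boundary, so the useful event is that they are joined by an open path \emph{inside} $Q$ (which is what your displayed inequality actually uses), not merely that they lie in the same cluster.
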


\begin{figure}[ht!]
\begin{center}
\includegraphics[width=0.8\textwidth]{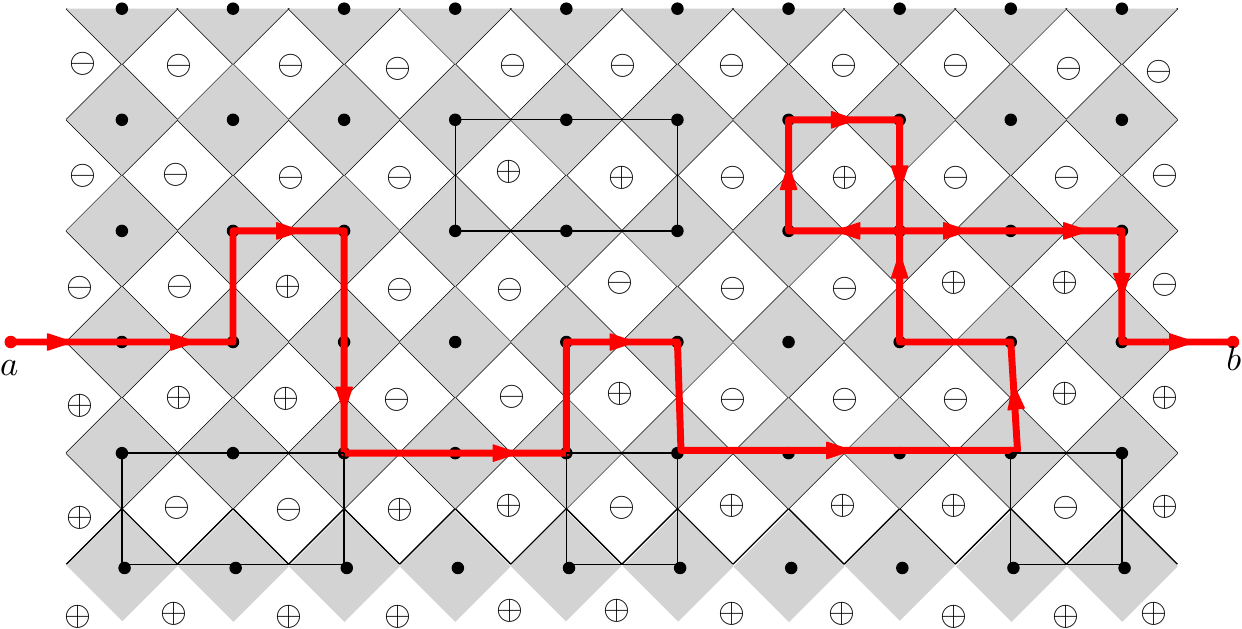}
\end{center}
\caption{\label{fig::ising_interface} The Ising interface with Dobrushin boundary condition. }
\end{figure}

For $\delta>0$, we consider the rescaled square lattice $\delta\Z^2$. The definitions of dual lattice, medial lattice and  Dobrushin domains extend to this context, and they will be denoted by $(\Omega_{\delta}, a_{\delta}, b_{\delta})$, $(\Omega^*_{\delta}, a^*_{\delta}, b^*_{\delta})$, $(\Omega^{\diamond}_{\delta}, a^{\diamond}_{\delta}, b^{\diamond}_{\delta})$ respectively. 
Consider the critical Ising model on $(\Omega^*_{\delta}, a^*_{\delta}, b^*_{\delta})$. The boundary $\partial \Omega^*_{\delta}$ is divided into two parts $(a^*_{\delta}b^*_{\delta})$ and $(b^*_{\delta}a^*_{\delta})$. We fix the Dobrushin boundary conditions: $\ominus$ on $(b^*_{\delta}a^*_{\delta})$ and $\oplus$ on $(a^*_{\delta}b^*_{\delta})$. Define the \textit{interface} as follows. It starts from $a_{\delta}^{\diamond}$, lies on the primal lattice and turns at every vertex of $\Omega_{\delta}$ is such a way that it has always dual vertices with spin $\ominus$ on its left and $\oplus$ on its right. If there is an indetermination when arriving at a vertex (this may happen on the square lattice), turn left. See Figure \ref{fig::ising_interface}. We have the convergence of the interface:  

\begin{theorem}\label{thm::ising_cvg_minusplus}\cite{CDCHKSConvergenceIsingSLE}.
Let $(\Omega^{\diamond}_{\delta}; a^{\diamond}_{\delta}, b^{\diamond}_{\delta})$ be a family of Dobrushin domains converging to a Dobrushin domain $(\Omega; a, b)$ in the Carath\'eodory sense. The interface of the critical Ising model in $(\Omega^*_{\delta}, a^*_{\delta}, b^*_{\delta})$ with Dobrushin boundary condition converges weakly to $\SLE_{3}$ as $\delta\to 0$.
\end{theorem}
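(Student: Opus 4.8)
The plan is to follow the two-step strategy that has become standard for proving convergence of critical lattice interfaces: first establish precompactness (tightness) of the family of interface laws, and then identify the unique subsequential limit as $\SLE_3$ by means of a discrete holomorphic martingale observable. For the tightness, I would verify Condition C2 of the Kemppainen--Smirnov theory and invoke Theorem \ref{thm::cvg_curves_chordal}. The key input is the strong RSW estimate for the critical Ising model (Proposition \ref{prop::ising_rsw}): an unforced crossing of a thin quad occurs with probability bounded away from $1$, uniformly in $\delta$. Combined with the Domain Markov property and the comparison between boundary conditions (\ref{eqn::ising_boundary_comparison}), this yields the required uniform bound on the conditional crossing probability after any stopping time. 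Theorem \ref{thm::cvg_curves_chordal} then gives that the laws of the interfaces $\{\eta_{\delta}\}$, of their driving functions, and of their capacity parametrizations are all tight, and that along any subsequence the limit is a Loewner chain generated by a continuous curve, driven by a continuous function $W$.

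For the identification, I would construct the fermionic (spinor) observable $F_{\delta}$ of Chelkak--Smirnov on the medial lattice of the Dobrushin domain. This is an s-holomorphic function, normalized so that its square transforms as a discrete differential, and designed so that its boundary values are dictated by the Dobrushin boundary conditions (the ``square root of the tangent'' behaviour along $\partial\Omega_{\delta}$). The crucial analytic step is to prove that, suitably rescaled, $F_{\delta}$ converges as $\delta\to 0$ to the conformally covariant holomorphic solution $f$ of the associated Riemann boundary value problem in $(\Omega;a,b)$, of conformal dimension $1/2$; schematically $f(z)=\sqrt{\phi'(z)}$, where $\phi$ is the conformal map from $\Omega$ onto $\HH$ sending $a\mapsto 0,\ b\mapsto\infty$. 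This convergence is established through s-holomorphicity: the discrete primitive of $F_{\delta}^2$ is (approximately) discrete harmonic, a boundary-modification trick controls its boundary values, and precompactness of discrete holomorphic functions together with uniqueness of the limiting boundary value problem pins down $f$.

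The driving function is then identified via the martingale property. Run the interface up to a stopping time and let $\gamma_{\delta}$ denote the explored portion; by the Domain Markov property the observable in the slit domain, $M_{\delta}(z):=F_{\Omega_{\delta}\setminus\gamma_{\delta}}(z)$, is a martingale with respect to the exploration filtration. Passing to a subsequential scaling limit and using the convergence of the observable in the slit domain, one obtains that for each fixed bulk point $z$ the process $g_t'(z)^{1/2}\,\Psi(g_t(z)-W_t)$ is a local martingale, where $g_t$ are the Loewner maps of the limiting curve and $\Psi$ is the explicit conformally covariant profile coming from $f$. Expanding this expression by It\^{o}'s formula against the a priori semimartingale $W$ and requiring the $dt$ coefficient to vanish for all $z$ forces the drift of $W$ to be zero and its quadratic variation to be $3\,dt$; hence $W_t=\sqrt{3}\,B_t$ and the limit is $\SLE_3$. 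Since the subsequential limit is thereby shown to be unique, the whole family converges. I expect the construction and, above all, the scaling-limit convergence of the s-holomorphic observable to be the main obstacle: proving s-holomorphicity, setting up and solving the discrete Riemann boundary value problem, and upgrading discrete holomorphicity to convergence with the correct conformal covariance are precisely the technically delicate heart of the Chelkak--Smirnov argument.
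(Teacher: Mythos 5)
This theorem is not proved in the paper: it is quoted from \cite{CDCHKSConvergenceIsingSLE} and used as the sole external input to the whole argument, so there is no in-paper proof to compare against. Your outline correctly reproduces the strategy of that reference (tightness via the strong RSW estimate and the Kemppainen--Smirnov Condition C2 framework, then identification of the driving process through the s-holomorphic fermionic observable and its martingale property), though it remains a sketch whose genuinely hard analytic content --- s-holomorphicity, the discrete Riemann boundary value problem, and the scaling-limit convergence of the observable with the correct conformal covariance --- is acknowledged but not carried out.
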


\begin{theorem}\label{thm::ising_cvg_minusfree}
Let $(\Omega^{\diamond}_{\delta}; a^{\diamond}_{\delta}, b^{\diamond}_{\delta})$ be a family of Dobrushin domains converging to a Dobrushin domain $(\Omega; a, b)$ in the Carath\'eodory sense. The interface of the critical Ising model in $(\Omega^*_{\delta}, a^*_{\delta}, b^*_{\delta})$ with the boundary condition $\ominus$ along $(b^*_{\delta}a^*_{\delta})$ and $\free$ along $(a^*_{\delta}b^*_{\delta})$ converges weakly to $\SLE_{3}(-3/2)$ as $\delta\to 0$.
\end{theorem}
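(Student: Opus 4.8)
The plan is to prove the convergence in two stages, \emph{tightness} and \emph{identification}, following the Loewner-evolution strategy of \cite{KemppainenSmirnovRandomCurves} for the former and bootstrapping from the Dobrushin convergence (Theorem \ref{thm::ising_cvg_minusplus}) for the latter. By conformal invariance and the Carath\'eodory convergence of the domains, it suffices to work in the half-plane $\HH$ with $a=0$, $b=\infty$, boundary condition $\ominus$ on $\R_-=(ba)$ and $\free$ on $\R_+=(ab)$, and to identify the Loewner driving function of every subsequential limit of the interface $\eta_\delta$.

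First I would establish tightness by verifying Condition C2 for the family of $\ominus/\free$ interfaces. The strong RSW estimate (Proposition \ref{prop::ising_rsw}) bounds the $\oplus$-crossing probability of any avoidable quad of large extremal distance; combined with the domain Markov property, the FKG inequality, and the monotonicity in boundary conditions (\ref{eqn::ising_boundary_comparison}), this yields a uniform-in-$\delta$ unforced-crossing bound after exploring an initial segment of $\eta_\delta$. Condition C2 then gives, through Theorem \ref{thm::cvg_curves_chordal}, precompactness of the laws and the fact that any subsequential limit is generated by a continuous curve with a continuous Loewner driving function $W_t$.

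For the identification I would compare the $\ominus/\free$ measure with the Dobrushin ($\ominus/\oplus$) measure. Run the interface to a stopping time $\tau$ before it comes close to the free arc, let $D_\tau$ be the unexplored component, and write the Radon--Nikodym derivative of $\mu^{\ominus/\free}$ with respect to $\mu^{\ominus/\oplus}$ restricted to $\eta[0,\tau]$. Since the two boundary conditions differ only on $\R_+$ and the explored segment carries the same local weights under both, this derivative equals $\Phi(D_\tau)/\Phi(\Omega)$ with $\Phi(D)=Z^{\free}_D/Z^{\oplus}_D$, a positive martingale under $\mu^{\ominus/\oplus}$. By Theorem \ref{thm::ising_cvg_minusplus} the Dobrushin interface converges to $\SLE_3$; granting that $\Phi(D_\tau)$ converges to a conformally covariant positive functional $N_t$ of the slit domain with marked point at $0$, the weighted law converges to $\SLE_3$ reweighted by $N_t$. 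By Girsanov's theorem (cf.\ Lemma \ref{lem::sle_rho_mart}, specialized to a single force point), reweighting $\SLE_3$ by a martingale of the form $g_t'(0_+)^{\rho(\rho+4-\kappa)/(4\kappa)}(g_t(0_+)-W_t)^{\rho/\kappa}$ produces exactly $\SLE_3(\rho)$ with force point $0_+$; matching the conformal covariance exponent of $\Phi$, namely the boundary weight $1/16$ of the free boundary-condition-changing operator, to $\rho(\rho+4-\kappa)/(4\kappa)$ with $\kappa=3$ forces $\rho=-3/2$ (indeed $(-3/2)(-1/2)/12=1/16$). Hence every subsequential limit is $\SLE_3(-3/2)$, and precompactness upgrades this to weak convergence.

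The main obstacle is the identification input: proving that $\Phi(D_\tau)=Z^{\free}_{D_\tau}/Z^{\oplus}_{D_\tau}$ converges and is conformally covariant with exponent $1/16$. This is the one place where genuine Ising information beyond Theorem \ref{thm::ising_cvg_minusplus} is required, as it encodes the free boundary exponent; it can be supplied either by the s-holomorphic free-arc (spinor) observable of \cite{HonglerKytolaIsingFree, IzyurovObservableFree} or by boundary-magnetization asymptotics. A second, more technical difficulty is that $\rho=-3/2$ lies in $(\kappa/2-4,\kappa/2-2)=(-5/2,-1/2)$, so the limiting curve hits the free arc at finite times; one must therefore control the swallowing of the force point $0_+$ and the degeneration of the Radon--Nikodym derivative there. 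This is precisely where the a priori RSW bounds from the tightness step are used, both to exclude pathological behaviour of the force point and to extend the identification up to and including the hitting times.
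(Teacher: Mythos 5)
Your proposal is correct in outline, but it is worth being clear about what the paper actually does here: its entire proof of this theorem is a two-line citation --- the convergence with $(\free\,\free)$ boundary conditions to $\SLE_3(-3/2;-3/2)$ is quoted from \cite{HonglerKytolaIsingFree, BenoistDuminilHonglerIsingFree}, and the author asserts that ``the same proof works'' for the $\ominus/\free$ case. What you have written is essentially a reconstruction of the skeleton of that cited proof: RSW-based tightness via Condition C2 and Theorem \ref{thm::cvg_curves_chordal}, then identification by viewing the $\ominus/\free$ law as the Dobrushin law reweighted by the partition-function ratio $Z^{\ominus/\free}_{D_\tau}/Z^{\ominus/\oplus}_{D_\tau}$ and matching, via Girsanov and Lemma \ref{lem::sle_rho_mart}, the covariance exponent $\rho(\rho+4-\kappa)/(4\kappa)=1/16$ at $\kappa=3$ to get $\rho=-3/2$ (your arithmetic $(-3/2)(-1/2)/12=1/16$ is right, as is the observation that $-3/2\in(\kappa/2-4,\kappa/2-2)$ so the limit curve touches the free arc). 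You correctly isolate the one step that cannot be bootstrapped from Theorem \ref{thm::ising_cvg_minusplus} alone: the convergence and conformal covariance of the partition-function ratio, which encodes the free boundary-condition-changing exponent $1/16$ and must be supplied by the s-holomorphic spinor observable of \cite{HonglerKytolaIsingFree, IzyurovObservableFree}. This is precisely why the paper's ``global'' resampling method (used for Theorem \ref{thm::ising_cvg_pair}) does not apply to this two-marked-point statement --- there is no second interface to condition on --- and why both the paper and your proposal must ultimately defer to the observable-based literature. So your argument is sound as a proof strategy, but it is not more self-contained than the paper's citation: the hard analytic input (convergence of the observable, and the control of the Radon--Nikodym derivative as the curve approaches and touches the free arc, which you rightly flag) remains black-boxed in exactly the same references.
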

\begin{proof}
It is proved in \cite{HonglerKytolaIsingFree, BenoistDuminilHonglerIsingFree} that the interface with $(\free\free)$ boundary conditions converges weakly to $\SLE_3(-3/2;-3/2)$ as $\delta\to 0$. The same proof works here. 
\end{proof}

\subsection{Proof of Theorem \ref{thm::ising_cvg_pair}}
Let $(\Omega_{\delta}; x^L_{\delta}, x^R_{\delta}, y^R_{\delta}, y^L_{\delta})$ be a sequence of discrete quads on the square lattice $\delta\Z^2$ approximating some quad $(\Omega; x^L, x^R, y^R, y^L)$. Consider the critical Ising model in $\Omega_{\delta}^*$ with alternating boundary conditions: $\ominus$ along $(x_{\delta}^Lx^R_{\delta})$ and $(y_{\delta}^Ry_{\delta}^L)$, and $\xi^R\in\{\oplus, \free\}$ along $(x_{\delta}^Ry_{\delta}^R)$, and $\xi^L\in\{\oplus,\free\}$ along $(y_{\delta}^Lx_{\delta}^L)$. Suppose there is a vertical crossing of $\ominus$ and denote this event by
\[\LC^{\ominus}_v(\Omega_{\delta})=\{(x^L_{\delta}x^R_{\delta})\overset{\ominus}{\longleftrightarrow}(y^R_{\delta}y^L_{\delta})\}.\]
Let $\eta^L_{\delta}$ be the interface starting from $x^L_{\delta}$ lying on the primal lattice. It turns at every vertex in the way that it has spin $\oplus$ on its left and $\ominus$ on its right, and that it turns left when there is ambiguity. 
Let $\eta^R_{\delta}$ be the interface starting from $x^R_{\delta}$ lying on the primal lattice. It turns at every vertex in the way that it has spin $\ominus$ to its left and $\oplus$ to its right, and turns right when there is ambiguity. Then $\eta^L_{\delta}$ will end at $y^L_{\delta}$ and $\eta^R_{\delta}$ will end at $y^R_{\delta}$. See Figure \ref{fig::ising_pair}. Let $\Omega^L_{\delta}$ be the connected component of $\Omega_{\delta}\setminus\eta^L_{\delta}$ with $(x_{\delta}^Ry_{\delta}^R)$ on the boundary and denote by $\LD^L_{\delta}$ the discrete extremal distance between $\eta^L_{\delta}$ and $(x^R_{\delta}y^R_{\delta})$ in $\Omega^L_{\delta}$. Define $\Omega^R_{\delta}$ and $\LD^R_{\delta}$ similarly. 
\begin{lemma}\label{lem::ising_distance_tight}
The variables $(\LD^L_{\delta}; \LD^R_{\delta})_{\delta>0}$ is tight in the following sense: for any $u>0$, there exists $\eps>0$ such that
\[\PP\left[\LD^L_{\delta}\ge\eps, \LD^R_{\delta}\ge \eps\cond \LC^{\ominus}_v(\Omega_{\delta}) \right]\ge 1-u,\quad \forall \delta>0.\]
\end{lemma}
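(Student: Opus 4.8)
The plan is to bound, for each $u>0$, the conditional probability $\PP[\LD^L_\delta<\eps\mid \LC^\ominus_v(\Omega_\delta)]$ and its mirror image for $\LD^R_\delta$, and then to conclude by a union bound together with the left--right symmetry of the set-up. First I would record that the vertical crossing itself is not rare: since the extremal distance $d_\Omega((x^Lx^R),(y^Ry^L))$ of the limiting quad lies in $(0,\infty)$, the strong RSW estimate (Proposition \ref{prop::ising_rsw}) and the FKG inequality give a constant $c_0>0$ with $\PP[\LC^\ominus_v(\Omega_\delta)]\ge c_0$ for all small $\delta$. Hence it suffices to prove a bound of the form $\PP[\LD^L_\delta<\eps,\,\LC^\ominus_v(\Omega_\delta)]\le f(\eps)$ with $f(\eps)\to0$ as $\eps\to0$, uniformly in $\delta$; dividing by $c_0$ then makes the conditional probability as small as we wish, and the symmetric argument applied to $\LD^R_\delta$ completes the union bound.

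The geometric heart is the conformal-rectangle duality for the quad $\Omega^L_\delta$ whose four sides are $\eta^L_\delta$, $(x^Lx^R)$, $(x^Ry^R)$ and $(y^Ry^L)$: up to the uniform comparison between the discrete and the continuous extremal distance, $d_{\Omega^L_\delta}(\eta^L_\delta,(x^Ry^R))\cdot d_{\Omega^L_\delta}((x^Lx^R),(y^Ry^L))=1$, so $\{\LD^L_\delta<\eps\}$ is the same as $\{d_{\Omega^L_\delta}((x^Lx^R),(y^Ry^L))>1/\eps\}$. On $\LC^\ominus_v$ there is no horizontal crossing of $\oplus$ from $(y^Lx^L)$ to $(x^Ry^R)$, this being the planar dual of the vertical $\ominus$ crossing, whereas $\LD^L_\delta<\eps$ forces the $\oplus$ phase bordering $\eta^L_\delta$ to come within extremal distance $\eps$ of $(x^Ry^R)$. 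I would then show that such a near-crossing-without-crossing cannot occur unless a monochromatic interface is squeezed through a conformal rectangle of modulus $\gtrsim 1/\eps$ in a collar of $(x^Ry^R)$: either the $\oplus$ phase reaches the collar through a thin finger, or it fills the bulk and leaves only a thin $\ominus$ channel separating it from the $\xi^R$-arc, and in both cases one obtains a crossing in the hard (top-to-bottom) direction of a long thin rectangle.

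To turn the large modulus into genuine decay I would run a multi-scale argument: split the collar of $(x^Ry^R)$ into $N\asymp 1/\eps$ (or, after passing to dyadic scales, $N\asymp\log(1/\eps)$) disjoint sub-quads of bounded modulus, each of which must be crossed in the hard direction on the event under consideration. Applying the strong RSW estimate of Proposition \ref{prop::ising_rsw}, which is uniform over boundary conditions, to each sub-quad and chaining the estimates through the comparison between boundary conditions (\ref{eqn::ising_boundary_comparison}) gives a bound $(1-c)^N=:f(\eps)$ with $f(\eps)\to0$. This is where the uniformity over boundary conditions is indispensable, because the relevant sub-quads abut the explored interface $\eta^L_\delta$ (carrying $\ominus$) and the $\xi^R$-arc, which are the least favourable boundary conditions for the crossing we need to rule out; the two cases $\xi^R\in\{\oplus,\free\}$ are treated by the same estimate.

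The main obstacle is precisely obtaining decay in $\eps$ rather than a bound merely bounded away from $1$: a single application of RSW only yields a constant factor, so the argument must extract from the one scalar inequality $\LD^L_\delta<\eps$ an honest family of $\asymp 1/\eps$ disjoint avoidable sub-quads crossed in the hard direction, and then chain the strong-RSW estimates uniformly in the random, interface-dependent boundary conditions and in $\delta$. Making this extraction rigorous --- in particular controlling the discrete-versus-continuous extremal distance and ensuring that the sub-quads are genuinely disjoint and avoidable --- is the technical core; by contrast, the reduction steps and the lower bound $\PP[\LC^\ominus_v(\Omega_\delta)]\ge c_0$ are routine consequences of RSW and FKG.
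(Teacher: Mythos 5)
Your reduction is the same as the paper's: lower-bound $\PP[\LC^{\ominus}_v(\Omega_{\delta})]$ by strong RSW and the comparison of boundary conditions, then show that $\PP[\{\LD^L_{\delta}\le\eps\}\cap\LC^{\ominus}_v(\Omega_{\delta})]\to 0$ uniformly in $\delta$, and finish by symmetry and a union bound. The paper disposes of the second step in one sentence (conditionally on the interface, a crossing of a quad of modulus $\ge 1/\eps$ has probability $c(\eps)\to 0$), whereas you expand it into a chaining argument over $\asymp 1/\eps$ sub-quads of bounded modulus; that expansion is a legitimate way to upgrade ``bounded away from $1$'' to ``tends to $0$'' and is consistent with what the cited strong RSW result supplies.

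The gap is in the assertion that the relevant sub-quads carry ``the least favourable boundary conditions for the crossing we need to rule out.'' For the crossings you exhibit this is backwards. The neck between $\eta^L_{\delta}$ and $(x^Ry^R)$ has as one of its two long sides an arc of $\eta^L_{\delta}$ whose adjacent spins \emph{on the neck side} are $\ominus$ by the very definition of the interface; a $\ominus$ crossing of the neck in the hard direction can hug that side and is essentially deterministic once $\eta^L_{\delta}$ is revealed, so no RSW estimate, however uniform in the boundary conditions, can make it unlikely. (The same objection applies to the ``thin $\oplus$ finger'': its long sides are arcs of $\eta^L_{\delta}$ carrying $\oplus$ on the finger side.) The cure is not uniformity of RSW but a different choice of what to condition on: stop the exploration $\gamma=\eta^L_{\delta}$ at the first time $\tau$ at which the extremal distance from $\gamma[0,\tau]$ to $(x^R_{\delta}y^R_{\delta})$ drops below $\eps$; this time occurs before completion on $\{\LD^L_{\delta}\le\eps\}$ because that extremal distance is non-increasing along the exploration. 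Given $\gamma[0,\tau]$, the event $\LC^{\ominus}_v(\Omega_{\delta})$ forces the vertical $\ominus$ crossing to separate the $\oplus$ spins attached to the \emph{left} side of $\gamma[0,\tau]$ from $(x^R_{\delta}y^R_{\delta})$; hence it forces a $\ominus$ crossing, in the direction of extremal distance $\ge 1/\eps$, of the quad whose two long sides are the left side of $\gamma[0,\tau]$ (carrying $\oplus$) and $(x^R_{\delta}y^R_{\delta})$ (carrying $\xi^R\in\{\oplus,\free\}$). Neither long side is $\ominus$, so this crossing is genuinely unfavourable and your chaining argument applies to it. Without this reorientation --- one long side must be the $\oplus$ side of the \emph{stopped} interface rather than the $\ominus$ side of the completed one --- the quantity your multi-scale estimate bounds is in fact close to $1$. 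The paper's own write-up is equally terse on exactly this point, so you have located the right difficulty; but the proposed resolution does not close it.
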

\begin{proof}
Since $(\Omega_{\delta}; x^L_{\delta}, x^R_{\delta}, y^R_{\delta}, y^L_{\delta})$ approximates $(\Omega; x^L, x^R, y^R, y^L)$, 
by Proposition \ref{prop::ising_rsw} and (\ref{eqn::ising_boundary_comparison}), we know that $\PP[\LC_v^{\ominus}(\Omega_{\delta})]$ can be bounded from below by some quantity that depends only on the extremal distance in $\Omega$ between $(x^Lx^R)$ and $(y^Ry^L)$ and that is uniform over $\delta$. 
Thus, it is sufficient to show $\PP\left[\{\LD^L_{\delta}\le\eps\}\cap\LC_v^{\ominus}(\Omega_{\delta})\right]$ is small for $\eps>0$ small. Given $\eta^L_{\delta}$ and on the event $\{\LD^L_{\delta}\le\eps\}$, combining Proposition \ref{prop::ising_rsw} and (\ref{eqn::ising_boundary_comparison}), we know that the probability to have a vertical crossing of $\ominus$ in $\Omega^L_{\delta}$ is bounded by $c(\eps)$ which only depends on $\eps$ and goes to zero as $\eps\to 0$. Thus $\PP\left[\{\LD^L_{\delta}\le\eps\}\cap\LC_v^{\ominus}(\Omega_{\delta})\right]\le c(\eps)$. This implies the conclusion. 
\end{proof}
\begin{proof}[Proof of Theorem \ref{thm::ising_cvg_pair}]
We only prove the conclusion for $\xi^R=\xi^L=\oplus$, and the other cases can be proved similarly (by replacing Theorem \ref{thm::ising_cvg_minusplus} by \ref{thm::ising_cvg_minusfree} when necessary). 
Combining Proposition \ref{prop::ising_rsw} with (\ref{eqn::ising_boundary_comparison}) and Lemma \ref{lem::ising_distance_tight}, we see that the sequence $\{(\eta^L_{\delta};\eta^R_{\delta})\}_{\delta>0}$ satisfies the requirements in Theorem \ref{thm::cvg_pairs}, thus the sequence is relatively compact. Suppose $(\eta^L;\eta^R)\in X_0(\Omega; x^L, x^R, y^R, y^L)$ is any sub-sequential limit and, for some $\delta_k\to 0$, 
\[(\eta^L_{\delta_k};\eta^R_{\delta_k})\overset{d}{\longrightarrow}(\eta^L;\eta^R)\quad \text{ in } X_0(\Omega; x^L, x^R, y^R, y^L).\]
Since $\eta^L_{\delta_k}\to \eta^L$, by Theorem \ref{thm::cvg_curves_chordal}, we know that we have the convergence in all three topologies. In particular, this implies the convergence of $\Omega^L_{\delta_k}$ in Carath\'eodory sense. Note that the conditional law of $\eta^R_{\delta_k}$ in $\Omega^L_{\delta_k}$ given $\eta^L_{\delta_k}$ is the interface of the critical planar Ising model with Dobrushin boundary condition. Combining with Theorem \ref{thm::ising_cvg_minusplus}, we know that, the conditional law of $\eta^R$ in $\Omega^L$ given $\eta^L$ is $\SLE_3$. By symmetry, the conditional law of $\eta^L$ in $\Omega^R$ given $\eta^R$ is $\SLE_3$. By Theorem \ref{thm::slepair_lessthan4}, there exists a unique such measure. Thus it has to be the unique sub-sequential limit. This completes the proof of Theorem \ref{thm::ising_cvg_pair}.
\end{proof}

\section{FK-Ising Model and Proof of Proposition \ref{prop::fkising_cvg_pair}}\label{sec::fkising}
\subsection{FK-Ising model}
We will consider finite subgraphs $G=(V(G), E(G))\subset \Z^2$. For such a graph, we denote by $\partial G$ the inner boundary of $G$:
\[\partial G=\{x\in V(G): \exists y\not\in V(G) \text{ such that } \{x,y\}\in E(\Z^2)\}.\]  

A \textit{configuration} $\omega=(\omega_e: e\in E(G))$ is an element of $\{0,1\}^{E(G)}$. If $\omega_e=1$, the edge $e$ is said to be open, otherwise $e$ is said to be closed. The configuration $\omega$ can be seen as a subgraph of $G$ with the same set of vertices $V(G)$, and the set of edges given by open edges $\{e\in E(G): \omega_e=1\}$.  

We are interested in the connectivity properties of the graph $\omega$. The maximal connected components of $\omega$ are called clusters. Two vertices $x$ and $y$ are connected by $\omega$ inside $S\subset \Z^2$ if there exists a path of vertices $(v_i)_{0\le i\le k}$  in $S$ such that $v_0=x, v_k=y$ and $\{v_i, v_{i+1}\}$ is open in $\omega$ for $0\le i<k$. We denote this event by $\{x\overset{S}{\longleftrightarrow}y\}$. If $S=G$, we simply drop it from the notation. For $A, B\subset \Z^2$, set $\{A\overset{S}{\longleftrightarrow} B\}$ if there exists a vertex of $A$ connected in $S$ to a vertex in $B$. 

Given a finite subgraph $G\subset \Z^2$, \textit{boundary condition} $\xi$ is a partition $P_1\sqcup \cdots\sqcup P_k$ of $\partial G$. Two vertices are wired in $\xi$ if they belong to the same $P_i$. The graph obtained from the configuration $\omega$ by identifying the wired vertices together in $\xi$ is denoted by $\omega^{\xi}$. Boundary conditions should be understood informally as encoding how sites are connected outside of $G$. Let $o(\omega)$ and $c(\omega)$ denote the number of open can dual edges of $\omega$ and $k(\omega^{\xi})$ denote the number of maximal connected components of the graph $\omega^{\xi}$.  

The probability measure $\phi^{\xi}_{p,q,G}$ of the \textit{random cluster model} model on $G$ with edge-weight $p\in [0,1]$, cluster-weight $q>0$ and boundary condition $\xi$ is defined by 
\[\phi^{\xi}_{p,q,G}[\omega]:=p^{o(\omega)}(1-p)^{c(\omega)}q^{k(\omega^{\xi})}/Z^{\xi}_{p,q,G},\]
where $Z^{\xi}_{p,q,G}$ is the normalizing constant to make $\phi^{\xi}_{p,q,G}$ a probability measure. For $q=1$, this model is simply Bernoulli bond percolation. 

For a configuration $\xi$ on $E(\Z^2)\setminus E(G)$, the boundary conditions induced by $\xi$ are defined by the partition $P_1\sqcup \cdots \sqcup P_k$, where $x$ and $y$ are in the same $P_i$ if and only if there exists an open path in $\xi$ connecting $x$ and $y$. We identify the boundary condition induced by $\xi$ with the configuration itself, and denote the random cluster model with these boundary conditions by $\phi^{\xi}_{p,q,G}$. As a direct consequence of these definitions, we have the Domain Markov Property of the random cluster model: 
Suppose that $G'\subset G$ are two finite subgraphs of $\Z^2$. 
Fix $p\in [0,1], q>0$ and $\xi$ some boundary conditions on $\partial G$. Let $X$ be a random variable which is measurable with respect to edges in $E(G')$. Then we have
\[\phi^{\xi}_{p,q,G}\left[X\cond \omega_e=\psi_e, \forall e\in E(G)\setminus E(G')\right]=\phi^{\psi^{\xi}}_{p,q,G}[X],\quad \forall \psi\in \{0,1\}^{E(G)\setminus E(G')},\]
where $\psi^{\xi}$ is the partition on $\partial G'$ obtained as follows: two vertices $x,y\in\partial G'$ are wired if they are connected in $\psi^{\xi}$. 

Suppose that $(\Omega; a, b)$ is a Dobrushin domain. The Dobrushin boundary condition is the following: free along $(ab)$ and wired along $(ba)$. Suppose that $(\Omega; a, b, c, d)$ is a quad. The alternating boundary condition is the following: free along $(ab)$ and $(cd)$, wired along $(bc)$ and $(da)$. 

Denote the product ordering on $\{0,1\}^E$ by $\le$. In other words, for $\omega, \omega'\in \{0,1\}^E$, we denote by $\omega\le \omega'$ if $\omega_e\le \omega'_e$, for all $e\in E$. An event $\LA$ depending on edges in $E$ is \textit{increasing} if for any $\omega\in \LA, \omega\le \omega'$ implies $\omega'\in \LA$. We have positive association (FKG inequality) when $q\ge 1$: 
Fix $p\in [0,1], q\ge 1$ and a finite graph $G$ and some boundary conditions $\xi$. For any two increasing events $\LA$ and $\LB$, we have 
\[\phi^{\xi}_{p,q,G}[\LA\cap \LB]\ge \phi^{\xi}_{p,q,G}[\LA]\phi^{\xi}_{p,q,G}[\LB].\]
 
As a consequence of the FKG inequality, we have the comparison principle between boundary conditions: fix $p\in [0,1], q\ge 1$ and a finite graph $G$. For any boundary conditions $\xi\le\psi$ and any increasing event $\LA$, we have 
\[\phi^{\xi}_{p,q,G}[\LA]\le \phi^{\psi}_{p,q,G}[\LA]. \]

For a finite subgraph $G$, we define $G^*$ to be the subgraph of $(\Z^2)^*$ with edge-set $E(G^*)=\{e^*: e\in E(G)\}$ and vertex set given by the end-points of these dual-edges. A configuration $\omega$ on $G$ can be uniquely associated to a dual configuration $\omega^*$ on the dual graph $G^*$ defined as follows: set $\omega^*(e^*)=1-\omega(e)$ for all $e\in E(G)$. A dual-edge $e^*$ is said to be dual-open if $\omega^*(e^*)=1$, it is dual-closed otherwise. A dual-cluster is a connected component of $\omega^*$. We extend the notion of dual-open path and the connective events in the obvious way. 
If $\omega$ is distributed according to $\phi^{\xi}_{p,q,G}$, then $\omega^*$ is distributed according to $\phi^{\xi^*}_{p^*, q^*, G^*}$ where 
\[q^*=q, \quad \frac{pp^*}{(1-p)(1-p^*)}=q,\]
and the boundary conditions $\xi^*$ can be deduced from $\xi$ in a case by case manner. In particular, $\xi=0$ corresponds to $\xi^*=1$ and $\xi=1$ corresponds to $\xi^*=0$.

The critical value of $p$ for a given $q$ is the following:
\[p_c(q)=\frac{\sqrt{q}}{1+\sqrt{q}}.\]
People believe that the critical random-cluster model is conformal invariant in the scaling limit for $q\in [1,4]$, and it is only proved for $q=2$ in \cite{ChelkakSmirnovIsing, CDCHKSConvergenceIsingSLE}. When $q=2$, the critical random-cluster model is also called FK-Ising model. 
We will list two special properties for the FK-Ising model that will be useful later: strong RSW and the convergence of the interface with Dobrushin boundary condition. 

\begin{proposition}\label{prop::rcm_rsw_strong}
\cite[Theorem 1.1]{ChelkakDuminilHonglerCrossingprobaFKIsing}.
For each $L>0$ there exists $c(L)>0$ such that the following holds: for any quad $(Q, a, b, c, d)$ with $d_Q((ab), (cd))\ge L$ and for any boundary conditions $\xi$, 
\[\phi^{\xi}_{p_c(2), 2, \Omega}\left[(ab)\leftrightarrow(cd)\right]\le 1- c(L).\]
\end{proposition}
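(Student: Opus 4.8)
The plan is to prove the bound in three stages: reduce to the extremal (wired) boundary condition by monotonicity, pass to the dual model using self-duality to turn the desired \emph{upper} bound into a crossing \emph{lower} bound, and finally supply that lower bound via a Russo--Seymour--Welsh estimate in the easy direction. The point of this reordering is that once monotonicity has handled the dependence on $\xi$, the remaining input is a single RSW-type lower bound for crossings under the least favorable boundary data, which is the genuine difficulty.

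First I would note that $\{(ab)\leftrightarrow(cd)\}$ is an increasing event, so by the comparison principle between boundary conditions it is maximized by the fully wired boundary condition. Hence it suffices to bound $\phi^1_{p_c(2),2,Q}[(ab)\leftrightarrow(cd)]$ from above, uniformly over quads with $d_Q((ab),(cd))\ge L$. Next I would invoke self-duality: at $p_c(2)$ one has $p^*=p$ and $q^*=q=2$, and the wired primal boundary condition dualizes to the free dual boundary condition. By planar duality, an open primal crossing between $(ab)$ and $(cd)$ exists precisely when the complementary arcs are \emph{not} joined by a dual-open path in $Q^*$, so that
\[
\phi^1_{p_c(2),2,Q}[(ab)\leftrightarrow(cd)] = 1 - \phi^0_{p_c(2),2,Q^*}[(bc)^*\leftrightarrow(da)^*].
\]
It therefore remains to prove a lower bound $\phi^0_{p_c(2),2,Q^*}[(bc)^*\leftrightarrow(da)^*]\ge c(L)>0$, that is, a crossing lower bound under the free (hence worst) boundary condition.

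By the duality of extremal length, $d_{Q^*}((bc)^*,(da)^*)$ is comparable to $1/d_Q((ab),(cd))\le 1/L$, so the dual crossing is in the \emph{easy} direction and the extremal distance is bounded above. The heart of the argument is then the RSW statement that such a crossing, even with free boundary data, has probability bounded below by a constant depending only on this extremal distance. I would establish this by first bounding below the probability of crossing a square in the easy direction at the self-dual point --- using the symmetry between a configuration and its dual together with self-duality of the measure, which keeps the square-crossing probability bounded away from both $0$ and $1$ --- and then bootstrapping to rectangles of arbitrary bounded aspect ratio by the usual RSW gluing of crossings combined with FKG, before translating bounded aspect ratio into bounded extremal distance. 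The main obstacle is carrying out this bootstrap uniformly over boundary conditions: the long-range dependence of the FK model obstructs a naive combinatorial RSW argument, and one must transport crossing information across the boundary either through the conditioning-and-pushing technique of Duminil-Copin, Hongler and Nolin or through the discrete-holomorphic fermionic observable developed in \cite{ChelkakDuminilHonglerCrossingprobaFKIsing}. Once the easy-direction lower bound is secured, combining it with the monotonicity and duality reductions yields the proposition with $c(L)$ depending only on $L$.
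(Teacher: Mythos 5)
The paper does not actually prove this proposition: it is imported verbatim as a citation of \cite[Theorem 1.1]{ChelkakDuminilHonglerCrossingprobaFKIsing}, and is used later only as a black box (in Lemmas \ref{lem::fkising_distance_tight} and \ref{lem::fkising_pair_unique}). Your proposal is therefore doing more than the paper does, and the reductions you perform are correct and standard: the crossing event is increasing, so the comparison principle reduces the claim to wired boundary conditions; at $p_c(2)$ the wired measure on $Q$ dualizes to the free measure on $Q^*$, and planar crossing duality converts the desired upper bound into a lower bound on the conjugate dual crossing; and the conjugacy of extremal lengths, $d_Q((ab),(cd))\cdot d_Q((bc),(da))=1$ (up to uniform constants in the discrete setting, as noted in Section 5 of the paper), places that dual crossing in the easy direction. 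What this buys is a clean explanation of why the stated upper-bound form of the theorem is equivalent to the lower-bound form (crossing probability bounded below, uniformly in boundary conditions, when the extremal distance is bounded above), which is how the result is usually proved. However, you should be clear that your argument is a \emph{reduction}, not a proof: the remaining step --- the uniform-over-boundary-conditions RSW lower bound for easy-direction crossings with free boundary data --- is precisely the hard content of the cited theorem, and your sketch of it (self-duality of the square crossing, FKG gluing, and then either the pushing technique of Duminil-Copin, Hongler and Nolin or the fermionic observable) is a pointer to the literature rather than an argument. Since the paper itself defers entirely to \cite{ChelkakDuminilHonglerCrossingprobaFKIsing}, this is an acceptable level of detail here, but as written your proposal ultimately rests on the same external input as the paper's citation, just entering one duality step earlier.
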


\begin{figure}[ht!]
\begin{subfigure}[b]{0.5\textwidth}
\begin{center}
\includegraphics[width=\textwidth]{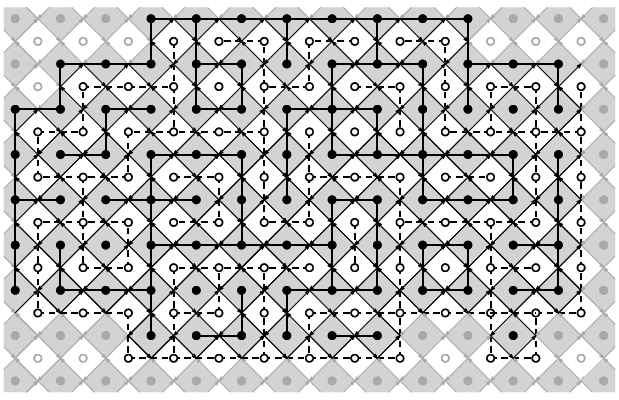}
\end{center}
\caption{The configuration $\omega$ and its dual $\omega^*$. }
\end{subfigure}
\begin{subfigure}[b]{0.5\textwidth}
\begin{center}\includegraphics[width=\textwidth]{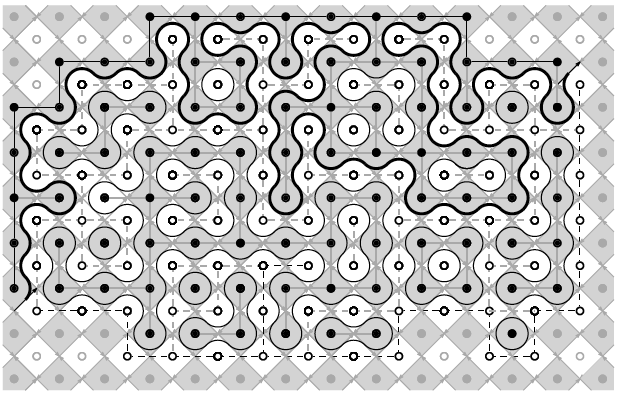}
\end{center}
\caption{The loop representation of $\omega$.}
\end{subfigure}
\caption{\label{fig::loop_representation} The loop representation of the configuration. }
\end{figure}

Fix a Dobrushin domain $(\Omega; a, b)$ and consider a configuration $\omega$ together with its dual-configuration $\omega^*$. The Dobrushin boundary condition is given by taking edges of $\partial_{ba}$ to be open and the dual-edges of $\partial_{ab}^*$ to be dual-open. Through every vertex of $\Omega^{\diamond}$, there passes either an open edge of $\Omega$ or a dual open edge of $\Omega^*$. Draw self-avoiding loops on $\Omega^{\diamond}$ as follows: a loop arriving at a vertex of the medial lattice always makes a $\pm\pi/2$ turn so as not to cross the open or dual open edges through this vertex, see Figure \ref{fig::loop_representation}. The loop representation contains loops together with a self-avoiding path going from $a^{\diamond}$ to $b^{\diamond}$. This curve is called the \textit{exploration path}. See \cite[Section 6.1]{CDParafermionic} for more details.

\begin{theorem}\label{thm::fkising_cvg}\cite{CDCHKSConvergenceIsingSLE}.
Let $(\Omega_{\delta}; a_{\delta}, b_{\delta})$ be a family of Dobrushin domains converging to a Dobrushin domain $(\Omega; a, b)$ in the Carath\'eodory sense. The exploration path of the critical FK-Ising model with Dobrushin boundary conditions converges weakly to $\SLE_{16/3}$ as $\delta\to 0$.
\end{theorem}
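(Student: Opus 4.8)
The plan is to separate the argument into \emph{tightness} and \emph{identification}. For tightness I would verify that the laws of the exploration paths satisfy Condition C2: the strong RSW estimate of Proposition \ref{prop::rcm_rsw_strong} bounds, uniformly in $\delta$ and in the boundary conditions, the probability that the path crosses an avoidable quad of bounded extremal distance, which is exactly the hypothesis of Condition C2. Theorem \ref{thm::cvg_curves_chordal} then gives simultaneous tightness of the curves, of their half-plane-capacity parameterizations, and of their Loewner driving functions, and ensures that any subsequential limit is generated by a continuous curve driven by the limit of the discrete driving functions. Thus it remains to show that every subsequential scaling limit has driving function $\sqrt{16/3}\,B$.

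The identification rests on a discrete holomorphic (fermionic) observable. Using the loop representation of the FK-Ising configuration with Dobrushin boundary conditions (Figure \ref{fig::loop_representation}), I would define, for a medial point $z$ of $\Omega_{\delta}^{\diamond}$, the observable $F_{\delta}(z)=\phi^{\mathrm{Dob}}_{p_c(2),2,\Omega_{\delta}}[\one_{\{z\in\gamma\}}\,e^{-\frac{i}{2}W(\gamma;a_{\delta}^{\diamond}\to z)}]$, where $\gamma$ is the exploration path and $W(\gamma;a_{\delta}^{\diamond}\to z)$ is its winding from $a_{\delta}^{\diamond}$ to $z$; the fermionic spin $\tfrac12$ is the one attached to $q=2$. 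The algebraic miracle, valid precisely at $p=p_c(2)$, is that $F_{\delta}$ is \emph{s-holomorphic}: the projections of $F_{\delta}$ at the two ends of each medial edge onto the line determined by that edge coincide, a discrete form of the Cauchy--Riemann equations. I would then record the boundary values of $F_{\delta}$, which solve a discrete Riemann--Hilbert problem fixing the phase of $F_{\delta}$ along $\partial\Omega_{\delta}$ up to the square-root conformal covariance.

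Passing to the limit, the a priori estimates furnished by s-holomorphicity yield precompactness of $F_{\delta}/\sqrt{\delta}$, and the Riemann--Hilbert boundary data identify every subsequential limit as the unique conformally covariant holomorphic solution, i.e. $\sqrt{\phi'}$ for the conformal map $\phi$ from $\Omega$ to a fixed reference domain respecting $a,b$. The decisive point is the resulting \emph{martingale observable}: by the domain Markov property, stopping $\gamma$ at a Loewner time $t$ and re-running the observable in $\Omega_{\delta}\setminus\gamma[0,t]$ shows that $F_{\delta}$ evaluated at a fixed target is a martingale in $t$; its scaling limit $M_t(z)$ is therefore a deterministic, conformally covariant function of the Loewner maps $g_t$ and the driving function $W_t$. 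Expanding $M_t(z)$ for fixed $z$ and applying It\^o's formula, the requirement that $M_t(z)$ be a local martingale forces the drift of $W_t$ to vanish and $d\langle W\rangle_t=(16/3)\,dt$; by L\'evy's characterization $W_t=\sqrt{16/3}\,B_t$, so the limit is $\SLE_{16/3}$, as claimed.

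The main obstacle is the middle step. Proving s-holomorphicity at criticality is a finite but delicate local computation, but the real difficulty is controlling the convergence of $F_{\delta}$ to $\sqrt{\phi'}$ \emph{uniformly up to the boundary}, including near the singularities $a$ and $b$ and near the moving tip of $\gamma$. This demands the regularity theory of s-holomorphic functions (discrete maximum principle, discretized boundary Harnack and subsequential-limit arguments) together with RSW input to exclude degenerate boundary behaviour; it is exactly the technical core of \cite{ChelkakSmirnovIsing, CDCHKSConvergenceIsingSLE}.
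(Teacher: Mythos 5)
This theorem is not proved in the paper at all: it is imported verbatim from \cite{CDCHKSConvergenceIsingSLE} as an external input (the paper's whole point is to \emph{avoid} constructing new observables by leveraging exactly this kind of Dobrushin-boundary-condition result). So there is no in-paper proof to compare against; the relevant comparison is with the cited literature, and your outline faithfully reconstructs that argument: tightness via the strong RSW estimate feeding Condition C2 and the Kemppainen--Smirnov framework (Theorem \ref{thm::cvg_curves_chordal}), then identification of the driving process via Smirnov's s-holomorphic fermionic observable with spin $\tfrac12$, its Riemann--Hilbert boundary value problem, the martingale property from the domain Markov property, and It\^o's formula yielding $d\langle W\rangle_t=\tfrac{16}{3}\,dt$ with vanishing drift. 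You are also right to flag that the genuine difficulty is the uniform-up-to-the-boundary convergence of the observable and the regularity theory of s-holomorphic functions; that is precisely the technical core of \cite{ChelkakSmirnovIsing, CDCHKSConvergenceIsingSLE}, and your sketch correctly locates it without pretending to have resolved it.
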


\subsection{Proof of Proposition \ref{prop::fkising_cvg_pair}}
\begin{lemma}\label{lem::fkising_distance_tight}
Let $(\Omega_{\delta}; a_{\delta}, b_{\delta}, c_{\delta}, d_{\delta})$ be a family of quads converging to a quad $(\Omega; a, b, c, d)$ in the Carath\'{e}odory sense. Consider the critical FK-Ising model with Dobrushin boundary condition: wired on $(d_{\delta}a_{\delta})$ and free on $(a_{\delta}d_{\delta})$ and denote by $\eta_{\delta}$ the exploration path from $a_{\delta}$ to $d_{\delta}$. 
Denote by $\LC_v^0(\Omega_{\delta})$ the event that there is a dual-crossing in $\Omega_{\delta}$ from $(a_{\delta}b_{\delta})$ to $(c_{\delta}d_{\delta})$. Then the law of $\eta_{\delta}$ conditioned on the event $\LC_v^0(\Omega_{\delta})$ converges weakly to $\SLE_{16/3}$ in $\Omega$ from $a$ to $d$ conditioned not to hit $(bc)$ as $\delta\to 0$. 
Given $\eta_{\delta}$ and on the event $\LC_v^0(\Omega_{\delta})$, define $\LD_{\delta}$ to be the discrete extremal distance between $\eta_{\delta}$ and $(b_{\delta}c_{\delta})$ in $\Omega^{\eta}_{\delta}$ which is the connected component of $\Omega_{\delta}\setminus\eta_{\delta}$ with $(b_{\delta}c_{\delta})$ on the boundary. Then $\{\LD_{\delta}\}_{\delta>0}$ is tight in the following sense: for any $u>0$, there exists $\eps>0$ such that 
\[\PP[\LD_{\delta}\ge \eps]\ge 1-u,\quad \forall \delta>0.\]
\end{lemma}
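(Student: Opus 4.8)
The plan is to reduce the event $\LC_v^0(\Omega_\delta)$ to a statement about the exploration path itself, then to establish the tightness of $\LD_\delta$ by a Russo--Seymour--Welsh argument, and finally to deduce the convergence of the conditioned law from the unconditioned convergence of Theorem \ref{thm::fkising_cvg} together with this tightness. First I would record the combinatorial identification $\LC_v^0(\Omega_\delta)=\{\eta_\delta\cap(b_\delta c_\delta)=\emptyset\}$. By planar duality for the random cluster model, the wired boundary $(d_\delta a_\delta)$ fails to connect to $(b_\delta c_\delta)$ by an open path if and only if there is a dual-open crossing of $\Omega_\delta$ from $(a_\delta b_\delta)$ to $(c_\delta d_\delta)$; since $\eta_\delta$ is the boundary of the wired cluster attached to $(d_\delta a_\delta)$, the former event is precisely that $\eta_\delta$ does not touch $(b_\delta c_\delta)$. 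Applying Proposition \ref{prop::rcm_rsw_strong} to the two complementary crossings of $\Omega$, the probability $\PP[\LC_v^0(\Omega_\delta)]$ is bounded away from $0$ and $1$ uniformly in $\delta$, with bounds depending only on the extremal distances in $\Omega$ between the two pairs of opposite arcs.

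For the tightness of $\LD_\delta$ I would mimic the proof of Lemma \ref{lem::ising_distance_tight}. Since $\PP[\LC_v^0(\Omega_\delta)]$ is uniformly positive, it suffices to bound $\PP[\{\LD_\delta\le\eps\}\cap\LC_v^0(\Omega_\delta)]$. On this event $\eta_\delta$ avoids $(b_\delta c_\delta)$, so $\Omega^{\eta}_{\delta}$ is a quad whose four boundary arcs are the $\eta_\delta$-arc together with the portions of $(c_\delta d_\delta)$, $(b_\delta c_\delta)$ and $(a_\delta b_\delta)$, all of which carry free boundary conditions (the $\eta_\delta$-arc borders $\Omega^{\eta}_{\delta}$ on its dual-open side). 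By reciprocity of extremal distance in a quad, the condition $\LD_\delta=d_{\Omega^{\eta}_{\delta}}(\eta_\delta,(b_\delta c_\delta))\le\eps$ forces the extremal distance between the $(a_\delta b_\delta)$-arc and the $(c_\delta d_\delta)$-arc to be at least $1/\eps$; and the dual crossing realising $\LC_v^0(\Omega_\delta)$ can be taken inside $\Omega^{\eta}_{\delta}$, where it connects exactly these two far-apart arcs. Iterating the strong RSW estimate of Proposition \ref{prop::rcm_rsw_strong} (valid for arbitrary boundary conditions) across $\asymp 1/\eps$ unit-size sub-quads bounds the probability of such a long dual crossing by a quantity $\rho(\eps)\to 0$, uniformly in $\delta$. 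Dividing by $\PP[\LC_v^0(\Omega_\delta)]$ gives $\PP[\LD_\delta\le\eps\cond\LC_v^0(\Omega_\delta)]\le\rho(\eps)/c\to 0$, which is the asserted tightness.

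For the convergence of the conditioned law, write $A=\{\eta:\eta\cap(bc)=\emptyset\}$, so that $\LC_v^0(\Omega_\delta)=\{\eta_\delta\in A\}$. By Theorem \ref{thm::fkising_cvg} the unconditioned law of $\eta_\delta$ converges weakly to $\SLE_{16/3}$ from $a$ to $d$; call a sample $\eta_\infty$. Extending the functional $\LD$ to curves, the event $\{\LD(\eta)>\eps\}$ is (up to $\eta_\infty$-null sets) open in the curve topology and the indicator of $A$ is constant on it, while the tightness just proved shows $\limsup_\delta\PP[\eta_\delta\in A,\ \LD_\delta\le\eps]\le\rho(\eps)$ and, passing to the limit, $\PP[\eta_\infty\in A,\ \LD(\eta_\infty)=0]=0$. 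Splitting $\{\eta_\delta\in A\}$ according to whether $\LD_\delta$ exceeds $\eps$ and letting $\eps\to 0$, the portmanteau theorem yields $\E[\LF(\eta_\delta)\one_A]\to\E[\LF(\eta_\infty)\one_A]$ for every bounded continuous $\LF$, and in particular $\PP[\eta_\delta\in A]\to\PP[\eta_\infty\in A]\in(0,1)$. Hence the conditioned laws $\PP[\,\cdot\cond\LC_v^0(\Omega_\delta)]$ converge weakly to the law of $\SLE_{16/3}$ conditioned on the positive-probability event $A$, that is, to $\SLE_{16/3}$ in $\Omega$ from $a$ to $d$ conditioned not to hit $(bc)$, as claimed (one may further identify this limit with $\hSLE_{16/3}(\kappa-6)$ via the Proposition relating $\SLE_\kappa$ conditioned to avoid $[x,y]$ to hypergeometric SLE).

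I expect the main obstacle to be this last step, because conditioning on the event $A$---which is neither open nor closed and whose limiting probability lies strictly between $0$ and $1$---is not continuous under weak convergence. The crux is that $A$ must be a continuity set for the limiting $\SLE_{16/3}$ law, and the only mechanism ruling out curves that touch $(bc)$ ``just barely'' is the uniform RSW tightness estimate for $\LD_\delta$; thus the two parts of the lemma are genuinely intertwined, the tightness being the analytic input that legitimises the conditioning. A secondary technical nuisance is that the quad $\Omega^{\eta}_{\delta}$ in the RSW step is random and correlated with the crossing it must contain, which is why Proposition \ref{prop::rcm_rsw_strong} is used in its boundary-condition-uniform form and applied to a deterministic family of sub-quads covering a neighborhood of $(bc)$.
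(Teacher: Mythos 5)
Your proposal is correct and follows essentially the same route as the paper: the paper's own proof consists of two sentences, deducing the convergence ``directly'' from Theorem \ref{thm::fkising_cvg} and the tightness ``by the same argument as in the proof of Lemma \ref{lem::ising_distance_tight}'' with Proposition \ref{prop::ising_rsw} replaced by Proposition \ref{prop::rcm_rsw_strong}, which is exactly your RSW step. Your additional care with the conditioned convergence --- using the tightness of $\LD_{\delta}$ to make the avoidance event an effective continuity set, which is genuinely needed since $\SLE_{16/3}$ touches the boundary and the event $\{\eta\cap(bc)=\emptyset\}$ is not a continuity set in the naive topological sense --- fills in detail that the paper leaves implicit.
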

\begin{proof}
The convergence is a direct consequence of Theorem \ref{thm::fkising_cvg}. The tightness can be proved by the same argument as in the proof of Lemma \ref{lem::ising_distance_tight} where we need to replace Proposition \ref{prop::ising_rsw} by Proposition \ref{prop::rcm_rsw_strong}.   
\end{proof}
\begin{lemma}\label{lem::fkising_pair_unique}
Let $(\Omega_{\delta}; x^L_{\delta}, x^R_{\delta}, y^R_{\delta}, y^L_{\delta})$ be a sequence of discrete quads on the square lattice $\delta\Z^2$ approximating some quad $(\Omega; x^L, x^R, y^R, y^L)$. Consider the critical FK-Ising model in $\Omega_{\delta}$ with alternating boundary conditions: free on $(x_{\delta}^Lx_{\delta}^R)$ and $(y_{\delta}^Ry_{\delta}^L)$, and wired on $(x_{\delta}^Ry_{\delta}^R)$ and $(y_{\delta}^Lx_{\delta}^L)$.
Conditioned on the event that there are two disjoint vertical dual-crossings, then there exists a pair of interfaces $(\eta^L_{\delta};\eta^R_{\delta})$ where $\eta^L_{\delta}$ (resp. $\eta^R_{\delta}$) is the interface connecting $x^L_{\delta}$ to $y^L_{\delta}$ (resp. connecting $x^R_{\delta}$ to $y^R_{\delta}$).
The law of $(\eta^L_{\delta}; \eta^R_{\delta})$ converges weakly to the unique pair of curves $(\eta^L;\eta^R)$ in $X_0(\Omega; x^L, x^R, y^R, y^L)$ with the following property: Given $\eta^L$, the conditional law of $\eta^R$ is an $\SLE_{16/3}$ conditioned not to hit $\eta^L$; given $\eta^R$, the conditional law of $\eta^L$ is an $\SLE_{16/3}$ conditioned not to hit $\eta^R$.
\end{lemma}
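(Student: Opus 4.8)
The plan is to follow the scheme of the proof of Theorem~\ref{thm::ising_cvg_pair}, replacing the Ising inputs by their FK-Ising counterparts and the $\SLE_3$ limit by a conditioned $\SLE_{16/3}$. Throughout, write $\PP$ for the law of the critical FK-Ising model conditioned on the existence of two disjoint vertical dual-crossings.

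\textbf{Relative compactness.} First I would verify that $\{(\eta^L_{\delta};\eta^R_{\delta})\}_{\delta>0}$ satisfies the hypotheses of Theorem~\ref{thm::cvg_pairs}. The strong RSW estimate Proposition~\ref{prop::rcm_rsw_strong} holds uniformly over all boundary conditions, hence — after conditioning on initial segments $\eta^L[0,\tau^L]$, $\eta^R[0,\tau^R]$, which only fix boundary data — it yields Condition C2 for the pair exactly as in the Ising case; together with the comparison between boundary conditions it also shows that the probability of two disjoint dual-crossings is bounded below uniformly in $\delta$, so the conditioning is harmless. The tightness of the extremal distances $(\LD^L_{\delta};\LD^R_{\delta})$ is obtained by the argument of Lemma~\ref{lem::ising_distance_tight}, with Proposition~\ref{prop::rcm_rsw_strong} in place of Proposition~\ref{prop::ising_rsw}: were $\LD^L_{\delta}$ small, a further dual-crossing inside $\Omega^L_{\delta}$ would be unlikely. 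Theorem~\ref{thm::cvg_pairs} then gives relative compactness in $X_0(\Omega; x^L, x^R, y^R, y^L)$.

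\textbf{Identification of subsequential limits.} Let $(\eta^L;\eta^R)$ be a subsequential limit along $\delta_k\to 0$. By the domain Markov property of the random-cluster model, conditionally on $\eta^L_{\delta_k}$ and on the crossing event, $\eta^R_{\delta_k}$ is the FK-Ising exploration path in $\Omega^L_{\delta_k}$ with the wired arc $(x^R_{\delta_k}y^R_{\delta_k})$ and free boundary elsewhere (the side of $\eta^L_{\delta_k}$ facing $\Omega^L_{\delta_k}$ induces free/dual-wired data), conditioned on a vertical dual-crossing separating $(x^Ry^R)$ from $\eta^L_{\delta_k}$. Since $\eta^L_{\delta_k}\to\eta^L$ in all three topologies of Theorem~\ref{thm::cvg_curves_chordal}, the domains $\Omega^L_{\delta_k}$ converge to $\Omega^L$ in the Carath\'eodory sense, and Lemma~\ref{lem::fkising_distance_tight} identifies the conditional limit as $\SLE_{16/3}$ in $\Omega^L$ conditioned not to hit $\eta^L$. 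By symmetry the conditional law of $\eta^L$ given $\eta^R$ is $\SLE_{16/3}$ in $\Omega^R$ conditioned not to hit $\eta^R$, so every subsequential limit enjoys the stated resampling property.

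\textbf{Uniqueness.} This is the crux. Mapping $\Omega^L$ to $\HH$ with $x^R\mapsto 0$ and $y^R\mapsto\infty$, the arc $\eta^L$ becomes a bounded boundary interval, and by the conditioning identity (the Proposition following Lemma~\ref{lem::hyperSLE_aroundTy}, applied with input $\SLE_{16/3}=\hSLE_{16/3}(-2)$) the conditional law of $\eta^R$ given $\eta^L$ is $\hSLE_{16/3}(\kappa-6)$ with marked points the endpoints of $\eta^L$; here $\kappa-6=-2/3>(-4)\vee(\kappa/2-6)$, so the process is well-defined, never hits $\eta^L$ (as $-2/3\ge\kappa/2-4$, by Proposition~\ref{prop::hyperSLE}), and is locally absolutely continuous with respect to $\SLE_{16/3}(\kappa-4,0)$ on the intervals $[0,T^n_y]$ where its Radon--Nikodym derivative is bounded. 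Because these conditional laws are conditioned $\SLE_{16/3}$ rather than plain $\SLE_\kappa(\rho)$ processes, Proposition~\ref{prop::slepair_uniqueness_general} does not apply off the shelf; instead I would re-run its Markov-chain and Choquet scheme for the present resampling kernels, alternately resampling $\eta^L$ and $\eta^R$ from their conditional laws and proving the chain has a unique stationary measure. The needed inputs are the Feller property of the kernels, which follows from the continuous dependence of $\hSLE_{16/3}(\kappa-6)$ on its domain and marked points together with the above absolute continuity, and a positive-probability coupling of two such conditioned curves in nearby domains, in the spirit of Lemma~\ref{lem::sle_close_positive}. The latter is where the real difficulty lies: since $\kappa=16/3>4$ the curves touch, so the clean non-intersecting GFF argument of Proposition~\ref{prop::slepair_uniqueness_general} is unavailable, and one must instead transfer Lemma~\ref{lem::sle_close_positive} to the conditioned process through the bounded Radon--Nikodym derivative on $[0,T^n_y]$, running the coupling up to a stopping time before either curve approaches the conditioned arc (or else reduce to the $\kappa=3\le 4$ setting by coupling outer boundaries and invoking Proposition~\ref{prop::slepair_uniqueness_general} there). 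Once uniqueness is established, all subsequential limits coincide, so $(\eta^L_{\delta};\eta^R_{\delta})$ converges weakly to the unique pair, completing the proof.
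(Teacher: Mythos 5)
Your proposal follows the paper's proof essentially step for step: relative compactness via Theorem \ref{thm::cvg_pairs} using Proposition \ref{prop::rcm_rsw_strong} and the analogue of Lemma \ref{lem::ising_distance_tight}, identification of the conditional laws of any subsequential limit via Lemma \ref{lem::fkising_distance_tight} and Carath\'eodory convergence of $\Omega^L_{\delta_k}$, and uniqueness by adapting the resampling argument of Proposition \ref{prop::slepair_uniqueness_general}. For the uniqueness step the paper takes exactly the route you relegate to a parenthetical: since the counterflow line is a deterministic function of the $\GFF$, the event that it avoids a boundary arc is $\GFF$-measurable, so one passes to the non-intersecting $\kappa=16/\kappa'=3$ boundary flow lines where the $\kappa\le 4$ argument applies; note also that your worry that ``the curves touch'' does not arise for the conditioned pair, since $\SLE_{16/3}$ conditioned to avoid the other curve (equivalently $\hSLE_{16/3}(\kappa'-6)$ with $\kappa'-6\ge\kappa'/2-4$) almost surely stays away from it by Proposition \ref{prop::hyperSLE}.
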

\begin{proof}
Combining Proposition \ref{prop::rcm_rsw_strong} and Lemma \ref{lem::fkising_distance_tight}, we see that the sequence $\{(\eta^L_{\delta};\eta^R_{\delta})\}_{\delta>0}$ satisfies the requirements in Theorem \ref{thm::cvg_pairs}, thus the sequence is relatively compact. 
Suppose $(\eta^L;\eta^R)$ is any subsequential limit and, for some $\delta_k\to 0$, we have $(\eta^L_{\delta_k}; \eta^R_{\delta_k})\to (\eta^L;\eta^R)$ in distribution. 
Let $\Omega^L_{\delta}$ be the connected component of $\Omega\setminus \eta^L_{\delta}$ with $(x^R_{\delta}y^R_{\delta})$ on the boundary, and define $\Omega^L$ similarly. Since $\eta^L_{\delta_k}\to \eta^L$, combining with Theorem \ref{thm::cvg_curves_chordal}, we have the convergence in all three topologies. In particular, the quads $\Omega^L_{\delta_k}$ converges to $\Omega^L$ in Carath\'eodory sense. Combining with Lemma \ref{lem::fkising_distance_tight}, we know that the conditional law of $\eta^R$ given $\eta^L$ is $\SLE_{16/3}$ conditioned not to hit $\eta^L$. By symmetry, the conditional of $\eta^L$ given $\eta^R$ is $\SLE_{16/3}$ conditioned not to hit $\eta^R$. It remains to explain that there is a unique measure on pairs $(\eta^L;\eta^R)$ with such property. The uniqueness can be proved by the same argument as in the proof of Proposition \ref{prop::slepair_uniqueness_general}. The key point is that the two curves $\eta^L, \eta^R$ do not hit each other and that the counterflow line is deterministic function of the $\GFF$ and thus the event that the counterflow line does not hit part of the boundary is also deterministic of the $\GFF$. 
\end{proof}

\begin{proof}[Proof of Proposition \ref{prop::fkising_cvg_pair}]
Fix $\kappa=16/3$. We derive the conclusion by three steps: first, let the quads $(\Omega_{\delta}; x^L_{\delta}, x^R_{\delta}, y^R_{\delta}, y^L_{\delta})$ approximate some quad $(\Omega; x^L, x^R, y^R, y^L)$; second, let $y^L, y^R\to y$; thirdly, let $x^L, x^R\to x$. In the first step, by Lemma \ref{lem::fkising_pair_unique}, we know that the pairs of interfaces converge weakly to a unique probability measure on $(\eta^L; \eta^R)\in X_0(\Omega; x^L, x^R, y^R, y^L)$ such that the conditional law of $\eta^R$ given $\eta^L$ is $\SLE_{\kappa}$ conditioned not to hit $\eta^L$ and the conditional law of $\eta^L$ given $\eta^R$ is $\SLE_{\kappa}$ conditioned not to hit $\eta^R$. Now, let us fix $\eta^L$ and let $y^R\to y:=y^L$. By Lemma \ref{lem::sle_conditioned}, we know that the conditional law of $\eta^R$ given $\eta^L$ converges weakly to $\SLE_{\kappa}(\kappa-4)$. Let $X_0^2(x^L, x^R, y)$ be the collection of pairs of curves $(\eta^L; \eta^R)$ such that $\eta^L\in X_0(\Omega; x^L, y), \eta^R\in X_0(\Omega; x^R, y)$ and that $\eta^L$ is to the left of $\eta^R$. From the above analysis, by sending $y^L, y^R\to y$, the limiting probability measure on $(\eta^L; \eta^R)\in X_0^2(x^L, x^R, y)$ satisfies the following property: the conditional law of $\eta^L$ given $\eta^R$ is $\SLE_{\kappa}(\kappa-4)$ and the conditional law of $\eta^R$ given $\eta^L$ is $\SLE_{\kappa}(\kappa-4)$. By Lemma \ref{lem::slemultiple}, there exists a unique such measure, and under this measure, the marginal law of $\eta^L$ is $\SLE_{\kappa}(\kappa-2)$ with force point located at $x^R$ and the marginal law of $\eta^R$ is $\SLE_{\kappa}(\kappa-2)$ with force point located at $x^L$. Finally, since the law of $\SLE_{\kappa}(\rho)$ process is continuous in the locations of the force points, we complete the proof by sending $x^L, x^R\to x$. 
\end{proof}


\section*{Acknowledgment}

The author is supported by the NCCR/SwissMAP, the ERC AG COMPASP, the Swiss NSF. The author thanks Dmitry Chelkak and Stanislav Smirnov for helpful discussion about Ising and FK-Ising model with alternating boundary conditions. The author thanks Gregory Lawler for helpful discussion on multiple SLEs.

\bibliographystyle{alpha}
\bibliography{bibliography}

\end{document}